\newcommand{\id}{id}
\newcommand{\Z}{\mathbbm{Z}}                     % the integer numbers
\newcommand{\R}{\mathbbm{R}}                     % the real line
\newcommand{\C}{\mathbbm{C}}                     % the complex plane
\newcommand{\T}{\mathcal{T}}                     % the torus
\newcommand{\J}{\mathcal{J}}
\newcommand{\D}{\mathbb{D}}
\newcommand{\M}{\mathcal{M}}
\renewcommand{\P}{\mathcal{P}}
\newcommand{\cz}{{\rm CZ}}
\renewcommand{\Im}{\mathrm{Im}}
\newcommand{\CH}{\mathrm{CH}} %cylindrical contact homology
\newcommand{\CC}{\mathrm{CC}} %cylindrical contact complex
\newcommand{\util}{\widetilde{u}}
\newcommand{\wtil}{\widetilde{w}}
\newcommand{\otheta}{\mathbf{\theta}}
\newcommand{\ovr}{\mathbf{r}}
\newcommand{\ot}{\mathbf{t}}
\newtheorem{thm}{Theorem}[section]               % numbered absolutely
\newtheorem*{thm*}{Theorem}               % no number
\newtheorem*{cor*}{Corollary}        % no number
\newtheorem{lem}[thm]{Lemma}  
\newtheorem*{lem*}{Lemma}
\newtheorem{prop}[thm]{Proposition}     % numbered along with Theorem
\theoremstyle{definition}
\newtheorem{defn}[thm]{Definition}      % numbered along with Theorem
\newtheorem{rem}[thm]{Remark}           % numbered along with Theorem  
 \newtheorem*{acknowledgement*}{\protect\acknowledgementname}
\newcounter{claim}
 \providecommand{\acknowledgementname}{Acknowledgement}
\author{Marcelo R.R. Alves}
\thanks{M.R.R. Alves was supported by the ERC consolidator grant 646649  ``SymplecticEinstein'' and by the SFB/TRR 191 ``Symplectic Structures in Geometry, Algebra and Dynamics''.}
\address{Marcelo R.R. Alves, D\'epartement de Math\'ematique\\
Universit\'e Libre de Bruxelles, CP 218,
Boulevard du Triomphe,
B-1050 Bruxelles,
Belgium.}
\email{\texttt{marcelorralves@gmail.com}}
\author{Abror Pirnapasov}
\thanks{A. Pirnapasov was supported by the SFB/TRR 191 ``Symplectic Structures in Geometry, Algebra and Dynamics''.}
\address{Abror Pirnapasov, Fakult\"at f\"ur Mathematik \\
Ruhr-Universit\"at Bochum \linebreak Lehrstuhl X (Analysis), Fach 55
Geb\"aude IB, Etage 3, Raum 59
D-44780 Bochum, Germany.}
\email{\texttt{Abror.Pirnapasov@rub.de }}
\title[Reeb orbits that force $h_{\rm top}$]{Reeb orbits that force topological entropy}
\begin{document}

\begin{abstract}
We develop a forcing theory of topological entropy for Reeb flows in dimension $3$. A transverse link $L$ in a closed contact $3$-manifold $(Y,\xi)$ is said to force topological entropy if $(Y,\xi)$ admits a Reeb flow with vanishing topological entropy, and every Reeb flow on $(Y,\xi)$ realizing $L$ as a set of periodic Reeb orbits has positive topological entropy. Our main results establish topological conditions on a transverse link $L$  which imply that $L$ forces topological entropy. These conditions are formulated in terms of two Floer theoretical invariants: the cylindrical contact homology on the complement of transverse links introduced by Momin in \cite{Momin}, and the strip Legendrian contact homology on the complement of transverse links, introduced in \cite{thesis} and further developed here. We then use these results to show that on every closed contact $3$-manifold that admits a Reeb flow with vanishing topological entropy, there exists transverse knots that force topological entropy.
\end{abstract}

\maketitle
\tableofcontents

\section{Introduction and main results}

It is well known that certain periodic motions can force the existence of a complicated orbit structure for a dynamical system. For instance, a periodic orbit with prime period $3$ of a continuous map of the interval forces the dynamics to have a chaotic behavior as shown in the celebrated paper by Li and Yorke~\cite{LY}.

In surface dynamics,  Nielsen-Thurston theory can be used to obtain forcing results for dynamical complexity.   More precisely, a periodic orbit $\mathcal{O}$ of an orientation preserving homeomorphism $f$ of a closed oriented surface forces infinitely many periodic orbits and positivity of topological entropy if $f$ is isotopic to a pseudo-Anosov homeomorphism relative to $\mathcal{O}$ through an isotopy that fixes $\mathcal{O}.$ A partial order organizes these forced periodic orbits as in the pioneering work of Sharkovskii. See the survey~\cite{Boyland} for a nice exposition of this theory.

Here we study dynamical complexity of Reeb flows, focusing on the $3$-dimensional case. The measure of complexity that we study in this paper is the topological entropy, which codifies in a single non-negative number the exponential complexity of a dynamical system; see section \ref{section:background} for a precise definition. 

Recall that a $1$-form on a $2n-1$-dimensional manifold $Y$ is called a contact form if $\lambda\wedge (d\lambda)^{n-1}$.
The resulting co-oriented hyperplane field $\xi := \ker \lambda$ is then called a co-oriented contact structure and the pair $(Y,\xi)$ is called a co-oriented contact manifold. If $\lambda$ and $\lambda'$ are contact forms on $(Y,\xi)$ then there exists a positive function $f: Y \to (0,+\infty)$ such that $\lambda'=f\lambda$. Given a contact form $\lambda$ on $(Y,\xi)$, its Reeb vector field $X_\lambda$ is the unique vector field  defined by $$\lambda(X_\lambda)\equiv 1 \qquad \mbox{ and } \qquad d\lambda (X_\lambda,\cdot) \equiv 0.$$
The set of Reeb flows on $(Y,\xi)$ is formed the flows of the Reeb vector fields of contact forms on $(Y,\xi)$. Since in this article we will only consider co-oriented contact manifolds, we will for simplicity drop the term co-oriented. Thus, from now on, when we say contact manifold we actually mean co-oriented contact manifold.

The relation between contact topology and the topological entropy of Reeb flows was studied in \cite{Alves-Cylindrical,Alves-Anosov,A2,AlvesColinHonda2017,AlvesMeiwes2018,MacariniSchlenk2011}, where it is shown that there exist many examples of contact manifolds on which every Reeb flow has positive topological entropy. However, there are important examples of contact manifolds which admit Reeb flows with vanishing topological entropy. 
In dimension $3$ we have that $S^3$ endowed with its unique tight contact structure $\xi_0$ and $T^3$ and $\R P^3$ endowed with the contact structures associated to geodesic flows admit Reeb flows with vanishing topological entropy. Examples of such flows for these manifolds are, respectively, the periodic Reeb flow on $(S^3,\xi_0)$, the geodesic flow of the flat metric or of a metric of revolution on $ T^2$, and the geodesic flow of a metric of revolution on $S^2$.

More generally, $3$-dimensional pre-quantisation bundles admit periodic Reeb flows which have vanishing topological entropy.
In \cite[Section 7]{FHV} the authors apply Legendrian surgery to $3$-dimensional pre-quantization bundles and obtain Reeb flows with vanishing topological entropy on many contact $3$-manifolds.
A natural question which arises in such cases is whether there exist links formed by finitely many Reeb orbits that force topological entropy. In this article we show that such links do exist. 

To describe our results in more detail we introduce the following definition:
\begin{defn}
Let $(M,\xi)$ be a contact $3$-manifold that admits Reeb flows with vanishing topological entropy. A transverse link $L$ in $(M,\xi)$ is said to {\it force topological entropy} if every Reeb flow on $(M,\xi)$ which has $L$ as a set of Reeb orbits has positive topological entropy. 
\end{defn}

Our main structural results combine ideas introduced by the first author in \cite{Alves-Cylindrical,A2,thesis} with those of \cite{HMS,Momin} to give topological conditions on a transverse link that imply that it forces topological entropy. These conditions are formulated in terms of two topological invariants of a transverse link $L$:
\begin{itemize}
    \item[-] The cylindrical contact homology in the complement of the transverse link $L$, as defined by Momin in \cite{Momin}.
    \item[-] The strip Legendrian contact homology of a pair of  Legendrian knots $\Lambda$ and $\widehat{\Lambda}$ in the complement of the transverse link ${L}$, which is a relative version of Momin's theory and was introduced in \cite{thesis}.
\end{itemize}
\begin{rem}
A direct application of \cite[Theorem~2.6.12]{GeigesBook} shows that if a transverse link $L'$ belongs to the transverse isotopy class $[L]$ of $L$, then there exists a self contactomorphism of $(Y,\xi)$ which maps $L$ to $L'$. In particular, if $L$ forces topological entropy then any $L'\in [L]$ forces topological entropy as well.
\end{rem}

\begin{rem}
Given any transverse link $L$ on a contact $3$-manifold $(Y,\xi)$, there  always exist contact forms on $(Y,\xi)$ which have $L$ as a set of periodic orbits. This follows easily from the tubular neighbourhood theorem of Martinet for transverse knots in contact $3$-manifolds (see \cite[Theorem 2.5.15]{GeigesBook}), which says that any transverse knot in a contact $3$-manifold has a tubular neighbourhood where the contact structure is contactomorphic $(\D \times S^1,\ker(d\theta + xdy))$ for coordinates $\theta$ in $S^1$ and $(x,y)$ in $\D$. As a consequence, for any transverse link $L$ in a closed contact $3$-manifold admitting Reeb flows with vanishing $h_{top}$, it always makes sense to ask whether $L$ forces topological entropy.
\end{rem}

We can now state our main structural results. We start with
\begin{thm} \label{theorem:growthleg}
Let $(Y,\xi)$ be a closed contact $3$-manifold which admits Reeb flows with vanishing topological entropy. Let $L$ be a transverse link in $(Y,\xi)$ and $\lambda_0$ be a contact form on $(Y,\xi)$ adapted to $(Y\setminus L,\Lambda \to \widehat \Lambda)$ and such that  $LCH_L(\lambda_0,\Lambda \to \widehat \Lambda)$ has exponential homotopical growth. Then, the transverse link $L$ forces topological entropy in $(Y,\xi)$. 

Moreover, if $a>0$ denotes the exponential homotopical growth rate of $LCH_L(\lambda_0,\Lambda \to \widehat \Lambda)$, then for every contact form $\lambda$ on $(Y,\xi)$ which has $L$ as a set of Reeb orbits we have
\begin{equation}
    h_{top}(\phi_\lambda) \geq \frac{a}{\max f_\lambda},
\end{equation}
where $f_\lambda$ is the function such that $\lambda=f_\lambda \lambda_0$.
\end{thm}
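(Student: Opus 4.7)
The plan is to reduce the statement to a counting result for $\lambda$-Reeb chords: for any contact form $\lambda$ on $(Y,\xi)$ having $L$ as a set of periodic Reeb orbits, I aim to show that the number of distinct homotopy classes of $\lambda$-Reeb chords from $\Lambda$ to $\widehat\Lambda$ in $Y\setminus L$ of $\lambda$-action at most $T$ grows at least like $e^{aT/\max f_\lambda}$, and then invoke the machinery developed in \cite{Alves-Cylindrical,thesis} that converts exponential growth of Reeb chords into a lower bound on the topological entropy of the Reeb flow.

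I would begin with an arbitrary such $\lambda$, write $\lambda=f_\lambda \lambda_0$, and set $C:=\max f_\lambda$, so that $C\lambda_0\geq\lambda$ pointwise while both forms continue to share $L$ as a set of Reeb orbits. Using this inequality I would construct an exact symplectic cobordism $(W,\omega)$ interpolating between the symplectization of $\lambda$ at the negative end and that of $C\lambda_0$ at the positive end, arranged so that $\R\times L$ extends to trivial Reeb cylinders and $\R\times\Lambda$, $\R\times\widehat\Lambda$ extend to exact Lagrangian cobordisms. The adaptedness of $\lambda_0$ to $(Y\setminus L,\Lambda\to\widehat\Lambda)$, together with the strip refinement of Momin's framework from \cite{thesis}, then furnishes transverse moduli spaces of $J$-holomorphic strips in this cobordism with boundary on the Lagrangian cylinders, producing a chain map
\[\Phi : LCH_L(C\lambda_0,\Lambda\to\widehat\Lambda) \longrightarrow LCH_L(\lambda,\Lambda\to\widehat\Lambda)\]
which is an isomorphism on homology and respects both the homotopy-class decomposition and the action filtration.

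The standard Stokes-type estimate then gives the key bound: a generator of $LCH_L(C\lambda_0,\Lambda\to\widehat\Lambda)$ in homotopy class $\rho$ of $C\lambda_0$-action at most $S$ is mapped by $\Phi$ to a sum of $\lambda$-Reeb chords in the same class $\rho$, each of $\lambda$-action at most $S$. The exponential homotopical growth hypothesis on $LCH_L(\lambda_0,\Lambda\to\widehat\Lambda)$ with rate $a$ rescales to rate $a/C$ for $C\lambda_0$, so for each $S>0$ there are at least $e^{aS/C-O(1)}$ distinct homotopy classes containing a $\lambda$-Reeb chord of $\lambda$-action at most $S$. Since distinct homotopy classes yield geometrically distinct Reeb chords from $\Lambda$ to $\widehat\Lambda$, the exponential growth rate of $\lambda$-Reeb chords is at least $a/C=a/\max f_\lambda$.

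The final step applies the implication \emph{exponential growth of Reeb chords between a pair of Legendrians forces positive topological entropy of the Reeb flow, with the chord growth rate as a quantitative lower bound}, proven by the first author in \cite{Alves-Cylindrical,thesis} via Yomdin-type volume estimates on Lagrangian strips. This yields $h_{top}(\phi_\lambda)\geq a/\max f_\lambda$, completing the proof. I expect the main obstacle to lie in the construction of the continuation chain map in this open, relative setting: one must control compactness of the strip moduli spaces near the removed link $\R\times L$, ensure via a suitably adapted almost complex structure that no holomorphic strip escapes toward $L$, and verify that the exact Lagrangian conditions on $\R\times\Lambda$, $\R\times\widehat\Lambda$ are compatible with the homotopy-class and action bookkeeping. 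Once these technical SFT-type points are handled, the counting and entropy steps are essentially routine.
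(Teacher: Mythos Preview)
Your proposal has a genuine gap at its core: you write down a chain map
\[
\Phi : LCH_L(C\lambda_0,\Lambda\to\widehat\Lambda) \longrightarrow LCH_L(\lambda,\Lambda\to\widehat\Lambda),
\]
but the target is in general not defined. The strip Legendrian contact homology on the complement of $L$ is constructed only for contact forms \emph{adapted} to $(Y\setminus L,\Lambda\to\widehat\Lambda)$, i.e.\ satisfying conditions (a)--(d) of Section~\ref{section3} (no contractible Reeb orbits in $Y\setminus L$, no trivial Reeb chords from $\Lambda$ or $\widehat\Lambda$ to itself, etc.). The theorem hypothesises only that $\lambda$ has $L$ as a set of Reeb orbits; there is no reason $\lambda$ should be adapted, and these hypertightness conditions are \emph{not} generic and cannot be arranged by perturbation. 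Without them, the differential on $LCC^{\rho,C}(\lambda)$ does not square to zero (bubbling of planes and half-planes is uncontrolled), and your cobordism map has no target. Your additional claim that $\Phi$ is an isomorphism on homology is also unjustified: cobordism maps between distinct contact forms are not isomorphisms in general.

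The paper avoids this obstacle by never defining $LCH$ for $\lambda$. Instead it sandwiches $\lambda$ between $\lambda_0$ and $c\lambda_0$ with $0<c<\min f_\lambda$ (both ends adapted, since scaling preserves adaptedness), builds an exact cobordism $\lambda_0\to c\lambda_0$ whose almost complex structure equals a regular $J_\lambda\in\mathcal J(\lambda)$ on a neck $[-R,R]\times Y$, and compares it via the chain-homotopy Proposition~\ref{propchainhom} to the trivial cobordism, whose induced map is the tautological isomorphism. Non-triviality then forces the existence of $\bar J_R$-strips for every $R$; letting $R\to\infty$ and invoking SFT compactness produces a $\lambda$-Reeb chord in each class $\rho\in\Omega^{C}(\lambda_0,\Lambda\to\widehat\Lambda)$ (Lemma~\ref{lemma:lowerbound}). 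A second point you gloss over is the passage from chord counting to entropy: the argument of \cite{A2} does not use growth of chords to a single $\widehat\Lambda$, but rather a \emph{uniform} lower bound over a full-measure family $\widehat\Lambda^z$ of nearby Legendrians (Proposition~\ref{mainproposition}), obtained via Lemmas~\ref{generic} and~\ref{lem:change_of_position}; this simultaneously handles possible degeneracy of $\lambda$ and feeds into Yomdin's theorem via the length growth of $\phi_\lambda^t(\Lambda)$.
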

The definition of $LCH_L$ and of the notion of a contact form adapted to $(Y\setminus L, \Lambda \to \widehat{\Lambda})$ are given in section \ref{section3}. The definition of the exponential growth rate of $LCH_L$ is presented in section \ref{section4}.

Our second structural result is
\begin{thm}\label{theorem:growth-entropy}
Let $(Y,\xi)$ be a closed contact $3$-manifold which admits Reeb flows with vanishing topological entropy. Let $L$ be a transverse link in $(Y,\xi)$ and $\lambda_0$ be a contact form on $(Y,\xi)$ such that 
\begin{itemize}
    \item $\lambda_0$ has $L$ as a set of periodic orbits and is hypertight on the complement of $L$,
    \item the cylindrical contact homology $CH_L(\lambda_0)$ has exponential homotopical growth.
\end{itemize}
Then, the transverse link $L$ forces topological entropy.

Moreover, if $a>0$ denotes the exponential homotopical growth rate of $CH_L(\lambda_0)$, then for every contact form $\lambda$ on $(Y,\xi)$ which has $L$ as a set of Reeb orbits we have
\begin{equation}
    h_{top}(\phi_\lambda) \geq \frac{a}{\max f_\lambda},
\end{equation}
where $f_\lambda$ is the function such that $\lambda=f_\lambda \lambda_0$.
\end{thm}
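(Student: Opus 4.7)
The approach parallels that of Theorem~\ref{theorem:growthleg}, with the cylindrical invariant $\CH_L$ replacing the strip Legendrian contact homology. Fix an arbitrary contact form $\lambda=f_\lambda\lambda_0$ on $(Y,\xi)$ having $L$ as a set of Reeb orbits, and set $c=\max f_\lambda$. The plan is to produce, for every sufficiently large $T>0$, at least $e^{aT}$ distinct free homotopy classes of loops in $Y\setminus L$ each containing a periodic orbit of $\phi_\lambda$ of $\lambda$-period at most $cT$. Granted this, the standard result relating exponential homotopical growth of periodic Reeb orbits on a closed $3$-manifold to the topological entropy of the flow (as used in \cite{Alves-Cylindrical,AlvesMeiwes2018}) yields $h_{top}(\phi_\lambda)\geq a/c$, which is precisely the asserted inequality and in particular forces positivity of $h_{top}(\phi_\lambda)$.

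To produce these orbits, one first verifies that $\lambda$ is itself hypertight on $Y\setminus L$: a contractible $\phi_\lambda$-orbit in $Y\setminus L$ would, via SFT compactness combined with neck-stretching along the boundary of a Martinet tubular neighborhood of $L$, produce a finite-energy $\lambda_0$-holomorphic plane in the symplectization of $Y\setminus L$, contradicting the hypertightness of $\lambda_0$. This allows $\CH_L(\lambda)$ to be defined. Noting that $\lambda\leq c\lambda_0$ pointwise, one constructs an exact symplectic cobordism on $\R\times(Y\setminus L)$ having $c\lambda_0$ at the positive end and $\lambda$ at the negative end; composed with the trivial rescaling cobordism relating $\lambda_0$ and $c\lambda_0$, this induces a continuation morphism $\Phi:\CH_L(\lambda_0)\to\CH_L(\lambda)$ that respects the decomposition by free homotopy classes of loops in $Y\setminus L$. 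By the contact invariance of $\CH_L$ on the link complement, $\Phi$ is in fact an isomorphism; consequently every class $\rho$ in which $\CH_L(\lambda_0)$ is nontrivial must contain at least one $\lambda$-Reeb orbit in $Y\setminus L$.

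The action estimate for punctured holomorphic cylinders in exact cobordisms, obtained from Stokes' theorem and the non-negativity of the Hofer energy, shows that if $\gamma_0$ is a generator of $\CH_L(\lambda_0)$ in class $\rho$ with $\lambda_0$-period at most $T$, then $\rho$ contains a $\phi_\lambda$-orbit of $\lambda$-period at most $cT$. Combined with the exponential homotopical growth hypothesis, this yields at least $e^{aT}$ distinct free homotopy classes in $Y\setminus L$ containing $\lambda$-Reeb orbits of $\lambda$-period at most $cT$, completing the count. The main technical obstacle I anticipate lies in rigorously controlling the SFT compactness and transversality for the continuation cobordism inside the symplectization of $Y\setminus L$: one must rule out degenerations producing holomorphic planes asymptotic to orbits in $L$ and must preserve the homotopy-class bookkeeping through the continuation. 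These issues are addressed by Momin's neck-stretching argument around tubular neighborhoods of $L$ together with Bourgeois--Oancea-style almost complex structures, as employed in \cite{Momin, Alves-Cylindrical, A2, thesis}.
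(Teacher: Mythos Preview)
Your outline recovers the existence of $\lambda$-Reeb orbits in exponentially many free homotopy classes of $Y\setminus L$ with the correct action bound; this part is essentially Momin's result (Theorem~\ref{theorem-momin} in the paper), and your cobordism argument is one route to it, though the paper simply quotes it as a black box. Note, however, that your claim that $\lambda$ itself must be hypertight on $Y\setminus L$ is unnecessary and the sketched SFT argument for it is not convincing: there is no reason a general $\lambda$ having $L$ as a set of Reeb orbits should have no contractible orbits in $Y\setminus L$, and Momin's theorem does not require this.

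The genuine gap is in the final step. You invoke ``the standard result relating exponential homotopical growth of periodic Reeb orbits on a closed $3$-manifold to the topological entropy,'' but that result (Theorem~\ref{t:homotopicalentropy}) counts free homotopy classes in the \emph{ambient compact manifold on which the flow lives}. Your homotopy classes live in $Y\setminus L$, not in $Y$; distinct classes in $Y\setminus L$ may all collapse to a single class in $Y$ (take $Y=S^3$, where every loop is contractible), so the result does not apply to $\phi_\lambda$ on $Y$, and $Y\setminus L$ itself is not compact. The paper's remedy is to blow up $Y$ along $L$, obtaining a compact manifold with boundary $Y_L$ carrying a lifted flow $\widehat\phi_\lambda$. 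Since $Y_L\setminus\partial Y_L\cong Y\setminus L$, the homotopy classes survive in $Y_L$, and Theorem~\ref{t:homotopicalentropy} applied on $Y_L$ gives $h_{top}(\widehat\phi_\lambda)\geq a/\max f_\lambda$. One then invokes Bowen's theorem (Theorem~\ref{theorem-Bowen}), proved in Section~\ref{section:blowup}, that blowing up a $3$-dimensional flow along periodic orbits does not change its topological entropy, to conclude $h_{top}(\phi_\lambda)=h_{top}(\widehat\phi_\lambda)$. This blow-up/Bowen step is the essential ingredient missing from your proposal.
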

For a recollection of the definition of $CH_L$ and its properties, and for the definition of  its growth rate the reader should go to section \ref{section:growth-entropy}.

To justify the introduction of this technology we show how theorem \ref{theorem:growthleg} and theorem \ref{theorem:growth-entropy} can be used to prove the following result.
\begin{thm} \label{theorem:existence}
%Let $(M,\xi)$ be a closed $3$-dimensional contact manifold supported by an open book decomposition $(\Sigma,\psi,\Psi)$ with the properties that  $\partial \Sigma$ and the return map $\psi$ map is isotopic to a pseudo-Anosov map. Then there exists an open book decomposition $(\Sigma,\psi',\Psi')$ diffeomorphic to $(\Sigma,\psi,\Psi)$ that also supports $(M,\xi)$
%$\lambda_0$ adapted to the open book $(\Sigma,\psi,\Psi)$ such that $CH_{\mathcal B}(\lambda_0)$ has exponential homotopical growth. It follows that $\mathcal B$ forces topological entropy.

Let $(Y,\xi)$ be a closed $3$-dimensional contact manifold which admits Reeb flows with vanishing topological entropy. Then, there exist transverse knots in $(Y,\xi)$ that force topological entropy.

\end{thm}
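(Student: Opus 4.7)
The plan is to reduce the statement to Theorem \ref{theorem:growth-entropy}, by constructing on an arbitrary closed contact $3$-manifold $(Y,\xi)$ that admits a Reeb flow of vanishing topological entropy a contact form $\lambda_0$ and a transverse knot $L\subset Y$ such that $\lambda_0$ has $L$ as a periodic Reeb orbit, is hypertight on $Y\setminus L$, and such that $CH_L(\lambda_0)$ has exponential homotopical growth. Granting such data, Theorem \ref{theorem:growth-entropy} immediately yields that $L$ forces topological entropy. (An entirely parallel scheme can be run through Theorem \ref{theorem:growthleg} by also producing a pair of Legendrians in $Y\setminus L$; this is the fallback if $CH_L$ turns out to be obstructed in a specific situation.)

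The construction itself would proceed via Giroux's open book correspondence. Fix any open book decomposition $(\Sigma,\varphi)$ of $Y$ supporting $\xi$; by performing sufficiently many positive stabilizations, which leave the supported contact structure unchanged, I would arrange simultaneously that the binding is connected and that $\varphi$ is freely isotopic to a diffeomorphism whose Nielsen--Thurston decomposition contains a pseudo-Anosov component on a subsurface $S\subset \Sigma$ of positive genus. I would then equip $(Y,\xi)$ with a Thurston--Winkelnkemper--Giroux type contact form $\lambda_0$ adapted to this open book, so that the binding is an elliptic periodic Reeb orbit, the pages are transverse to the Reeb flow in their interiors, and the first-return map to a page is smoothly isotopic to $\varphi$. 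The transverse knot $L$ is taken to be the binding itself. Hypertightness of $\lambda_0$ on $Y\setminus L$ is then automatic, because every Reeb orbit in the open mapping torus $Y\setminus L$ wraps nontrivially around the $S^1$-direction and is therefore non-contractible.

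The exponential homotopical growth of $CH_L(\lambda_0)$ would be extracted from Nielsen--Thurston theory applied to the pseudo-Anosov piece of $\varphi$: the number of periodic points of $\varphi^n$ on $S$ grows like $e^{n h(\varphi|_S)}$ with $h(\varphi|_S)>0$, and each such periodic point corresponds to a periodic Reeb orbit of $\lambda_0$ in $Y\setminus L$. Tracking the dynamics of the $\varphi$-invariant train-track foliations on $S$, one would show that these periodic orbits realize exponentially many distinct Nielsen classes, hence exponentially many distinct conjugacy classes in $\pi_1(Y\setminus L)\cong \pi_1(\Sigma)\ast_\varphi \langle t\rangle$, and therefore exponentially many free homotopy classes of Reeb orbits at action at most $T$. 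Since the cylindrical differential preserves free homotopy classes, this orbit-level lower bound transfers to a lower bound on the rank of $CH_L(\lambda_0)$ once a $C^\infty$-small perturbation of $\lambda_0$ respecting the filtration by free homotopy classes is used to achieve nondegeneracy, in the spirit of \cite{Momin}.

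The main technical obstacle is precisely the conversion of the exponential count of periodic points of $\varphi$ into an exponential count of distinct free homotopy classes of Reeb orbits in $Y\setminus L$. This amounts to a Nielsen-theoretic count inside the above HNN extension; it has to be done carefully because, a priori, many periodic points of $\varphi$ could sit in the same Nielsen class and therefore contribute to the same class in $\pi_1(Y\setminus L)$. A secondary technical point, already treated in \cite{Momin} and revisited in the earlier sections of the paper, is the well-definedness of $CH_L$ under hypertightness: this requires SFT compactness applied within each free homotopy class together with the standard monotonicity estimates, and must be checked to justify that the orbit-class count computes a homology-level growth rate.
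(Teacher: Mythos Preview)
Your proposal is essentially the paper's first proof: take the binding of a supporting open book with connected binding and pseudo-Anosov monodromy as $L$, build an adapted Thurston--Winkelnkemper--Giroux contact form $\lambda_0$, verify hypertightness on $Y\setminus L$, extract exponential homotopical growth of $CH_L(\lambda_0)$ from the Nielsen class count for the monodromy, and apply Theorem \ref{theorem:growth-entropy}. Two points the paper nails down that you should as well: the open book with \emph{globally} pseudo-Anosov monodromy and connected binding comes from the Colin--Honda refinement of Giroux (you only posit a pseudo-Anosov piece after stabilization, which is both harder to justify and harder to work with), and hypertightness plus the PLC condition require more than ``orbits wrap around $S^1$''---you must also check that every disk bounded by a cover of the binding hits the binding in its interior (the paper cites \cite[Lemma~6.6]{Momin}) and that the relevant free homotopy classes cannot be homotoped onto the binding, which the paper verifies via the retraction of $Y\setminus L$ onto the mapping torus.
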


%Our main results combine ideas of the first author \cite{A1,A2,A3} with results developed by Momin \cite{Momin} and  Hryniewicz, Momin and Salom\~ao \cite{HMS}. The results follow from the computation of a suitable Legendrian Contact Homology (LCH) on the complement of the transverse link. We count finite energy pseudo holomorphic strips whose boundary components lie on special Legendrian knots. These strips are asymptotic to Reeb chords connecting these Legendrian knots. One can filter the chain complex by the action of the asymptotic Reeb chords and the homotopy classes in the complement of the link. It turns out that the number of homotopy classes with non-vanishing LCH grows exponentially with the action. The positivity of topological entropy then follows from this exponential growth estimate and an application of Yomdin's theorem.
\subsection{Related developments}

\

\textbf{1.1.1 A Denvir-Mackay theorem for Reeb flows.} \\
In a joint work with Hryniewicz and Salom\~ao \cite{AHPS} we generalise to the category of Reeb flows a beautiful theorem of Denvir and Mackay which says that the geodesic flow of a Riemannian metric on $T^2$ with a contractible closed geodesic has positive topological entropy. To do this we use Theorem \ref{theorem:growth-entropy} to show that for any contractible flat knot $x$ in $T^2$, its transverse lift $L_x$ to the unit tangent bundle $(T_1T^2,\xi_{\rm geo})$ forces topological entropy. Here $\xi_{\rm geo}$ is the contact structure on $T_1T^2$ for which geodesic flows are Reeb flows, and $L_x$ is formed by the knots $(x^+,\frac{\dot{x}^+}{|\dot{x}^+|})$ and $(x^-,\frac{\dot{x}^-}{|\dot{x}^-|})$ where $x^+$ and $x^-$ are parametrizations of $x$ that induce opposite orientations. This gives an infinite family of explicit examples of distinct transverse links with two components in $(T_1T^2,\xi_{\rm geo})$ that force topological entropy.

\textbf{1.1.2 Forcing of topological entropy for positive contactomorphisms.} \\
It is natural to ask if one can generalise the results of this paper to the class of positive contactomorphisms. For this one would need to combine the ideas introduced here with the techniques developed by Dahinden, who used Rabinowitz-Floer homology to study the topological entropy of positive contactomorphisms in \cite{Dahinden2,Dahinden}.

\textbf{1.1.3 Recovering $h_{\rm top}$ with contact homologies.} \\
In this paper we focused on using the growth rate of contact homologies on the complement of transverse links as a tool to detect whether a transverse link forces topological entropy. However, we also plan to investigate how this tool can give information about the topological entropy of Reeb flows in a more general context. The following problem presents a direction of research which we plan to pursue: \\
{\it Given a Reeb flow $\phi_\lambda$ on a closed contact $3$-manifold $(M,\xi)$, does there exist a sequence $L_j$ of finite collections of Reeb orbits of $\phi_\lambda$ with the property that the exponential homotopical growth rates of $CH_{L_j}(\lambda)$ converge to $h_{\rm top}(\phi_\lambda)$ as $j \to +\infty$?} \\
In other words, how much of the topological entropy of Reeb flows can be recovered from the exponential growth rates of contact homologies?

\begin{rem}
The question of which contact $3$-manifolds admit Reeb flows with vanishing topological entropy is still wide open. The results in \cite{Alves-Cylindrical,Alves-Anosov,A2,AlvesColinHonda2017,MacariniSchlenk2011} exhibit large families of contact $3$-manifolds which do not admit Reeb flows with vanishing topological entropy, and it is natural to expect that most contact $3$-manifolds do not admit Reeb flows with vanishing $h_{\rm top}$. Other interesting results in this direction are obtained in \cite{CDR}, where the authors show that if $Y$ is a closed oriented hyperbolic $3$-manifold, then for any contact structure on $\xi$, every {\it non-degenerate} Reeb flow on $(Y,\xi)$ has positive $h_{\rm top}$. It is however still unclear if a closed hyperbolic contact $3$-manifold $(Y,\xi)$ can admit degenerate Reeb flows with vanishing $h_{\rm top}$, especially since most known examples of Reeb flows with vanishing $h_{\rm top}$ are indeed degenerate.
\end{rem}

\subsection{Organization of the paper}
In section \ref{section:background} we recall basic notions from contact geometry and dynamical systems.
In section \ref{section:blowup} we present a proof of a result of Bowen that says that blowing up a $3$-dimensional flow along periodic orbits does not change its topological entropy. This result is crucial for section \ref{section:growth-entropy} and is also interesting in its own right. In section \ref{section2} we recall the basic facts about pseudoholomorphic curves in symplectizations and symplectic cobordisms which we need to construct the Legendrian contact homology in the complement of a transverse link in section \ref{section3}. Section \ref{section4} contains the proof of theorem \ref{theorem:growthleg}. In section \ref{section:growth-entropy} we recall the basic facts about the cylindrical contact homology in the complement of a transverse link and prove theorem \ref{theorem:growth-entropy}. Finally, in section \ref{section:existence} we prove theorem \ref{theorem:existence}.

\

\textbf{Acknowledgements:} We thank Fr\'ed\'eric Bourgeois, Barney Bramham, Gerhard Knieper, Matthias Meiwes and Felix Schlenk for many helpful discussions and their interest in this work. Our special thanks to our collaborators Umberto Hryniewicz and Pedro A.S. Salom\~ao: our many discussions and our joint project \cite{AHPS} had a decisive influence in the development of this work, especially in sections \ref{section3} and \ref{section4}.

M.R.R. Alves was supported during the development of this project by the SFB/TRR 191 ``Symplectic Structures in Geometry, Algebra and Dynamics'' and by the ERC consolidator grant 646649  ``SymplecticEinstein''. A. Pirnapasov was supported during the development of this project by the SFB/TRR 191 ``Symplectic Structures in Geometry, Algebra and Dynamics''.

 %In section \ref{section:growth-entropy} we prove Theorem \ref{theorem:growth-entropy} which states that the exponential homotopical growth for the cylindrical contact homology on the complement of a transverse link, implies that the link forces topological entropy. \cite{Alves-Anosov,AlvesMeiwes2018,AlvesColinHonda2017}

\section{Recollections on contact geometry and dynamics} \label{section:background}

\subsection{Basics of contact geometry }
%We first recall some basic definitions from contact geometry. A 1-form $\lambda$  on a $3$-dimensional manifold $Y$ called a \textit{contact form} if $ \lambda \wedge d\lambda $ is a volume form on $Y$. The plane distribution $\xi= \ker \lambda$ is called the \textit{contact structure}. For us a \textit{contact $3$-manifold} will be a pair $(Y,\xi)$ such that $\xi$ is the kernel of some contact form $\lambda$ on $Y$ (these are usually called co-oriented contact manifolds in the literature). When $\lambda$ satisfies $\xi = \ker\lambda$, we will say that $\lambda$ is a contact form on $(Y,\xi)$. 

%The Reeb vector field associated to a contact form $\lambda$ is the unique vector field $R_\lambda$ on $Y$ satisfying $$\lambda(R_\lambda)\equiv 1 \qquad \mbox{ and } \qquad d\lambda (R_\lambda,\cdot) \equiv 0.$$ Its flow is called the Reeb flow of $\lambda$. The tangent bundle of $Y$  splits as $$TY = \ker d \lambda \oplus \ker \lambda= \R R_\lambda \oplus \xi.$$

Let $(Y,\xi)$ be a contact $3$-manifold and $\lambda$ a contact form on $(Y,\xi)$.
A periodic orbit $\gamma$ of the Reeb flow of $\lambda$ is called a Reeb orbit of $\lambda$. Its action $A(\gamma):=\int_{\gamma} \lambda$ coincides with its period $T$ since $\lambda(\dot \gamma)=\lambda(R_\lambda)=1.$

An embedded link in  $(Y,\xi)$ is called {\it Legendrian} if it is everywhere tangent to $\xi$. Given a contact form $\lambda$ and a pair of Legendrian knots $(\Lambda,\widehat{\Lambda})$, a \textit{Reeb chord} of $\lambda$ from $\Lambda$ to $\widehat{\Lambda}$ is a trajectory $\tau$ of the Reeb flow of $\lambda$ that starts in $\Lambda$ and ends in $\widehat{\Lambda}$. We define the action $A(\tau)$ of a Reeb chord $\tau$ as $A(\tau)=\int_\tau \lambda$. A Reeb chord $\tau$ is said to be \textit{transverse} if the intersection $\phi^{A(\tau)}_{X_\lambda}(\Lambda) \cap \widehat{\Lambda}$ is transverse at the endpoint of $\tau$.

A transverse link  $(Y,\xi)$ is an embedded link $L \hookrightarrow Y$ that is everywhere transverse to $\xi$.
%, and a transverse knot is a connected transverse link. 
The transverse isotopy class $[L]$ of a transverse link $L$ is the set of all transverse links in $(Y,\xi)$ which are isotopic to $L$ among transverse links.

\subsection{Topological entropy}

The topological entropy $h_{top}$ is a non-negative number that one associates to a dynamical system and which measures the complexity of the dynamics. Positivity of the topological entropy for a dynamical system implies some type of exponential instability.

We start with a definition of the topological entropy for flows on compact manifolds which is due to Bowen. Let $M$ be a closed manifold, $X$ be a $C^\infty$-smooth vector field on $M$ and $\phi$ be the flow of $x$. We consider an auxiliary Riemannian metric $g$ on $M$, and denote by $d_g$ the distance function associated to the metric $g$. Given positive numbers $T,\delta$ we say that a subset $S \subset M$  is  $T,\delta$-separated for $\phi$ if, for all points $p,q \in S$ with $p\neq q$, we have
$$\max_{t \in [0,T]}\{ d_g(\phi^t(p) ,\phi^t(q)) \} > \delta.  $$ 
We let $n^{T,\delta}_X$ be the maximal cardinality of a $T,\delta$-separated set for the flow $\phi$ of $X$. The $\delta$-entropy $h_\delta$ is then defined by 
$$h_\delta(\phi):= \limsup_{T\to +\infty} \frac{\log(n^{T,\delta}_X)}{T},$$
and the topological entropy $h_{top}$ is defined by
\begin{equation}
    h_{top}(\phi):= \lim_{\delta \to 0} h_{\delta}(\phi).
\end{equation}
Geometrically, we see that $h_\delta$ measures the exponential growth rate of the number of orbits which are distinguishable with precision $\delta$ as time advances: $h_{top}$ is then the limit of these growth rates as the $\delta$ goes to $0$.
We refer the reader to \cite{Hasselblatt-Katok} for the basic properties of $h_{top}$.

By deep results of Yomdin and Newhouse the topological entropy of a $C^\infty$-smooth flow $\phi$ coincides with the volume growth ${\rm vol}(\phi)$ of submanifolds by $\phi$, which we now define. For the auxiliary Riemannian metric $g$, we let ${\rm Vol}_g^k$ denote the $k$-dimensional volume with respect to $g$.
Then for any closed $k$-dimensional $C^\infty$-submanifold $V$ of $M$, we define 
$${\rm vol}(V,\phi):= \limsup_{t \to +\infty} \frac{\log ({\rm Vol}_g^k(\phi^t(V)) )}{t}.$$
We see that ${\rm vol}(V,\phi)$ measures the exponential growth as $t \to +\infty$ of the volume ${\rm Vol}_g^k(\phi^t(V))$ of the image of $V$ by $\phi^t$. The volume growth ${\rm vol}(\phi)$ is then defined by
\begin{equation}
{\rm vol}(\phi) := \sup_{V \in {\rm Sub}^{\infty}(M)} \{{\rm vol}(V,\phi))  \},
\end{equation}
where ${\rm Sub}^{\infty}(M)$ denotes the set of $C^\infty$-smooth closed submanifolds of $M$ of all possible co-dimensions. 

In this article we use two different techiniques to establish positivity of topological entropy for Reeb flows. The technique used in section \ref{section4} consists in proving that the volume growth of certain Legendrian knots by the Reeb flows is exponential. It then follows from the results of Yomdin that the topological entropy of the flow is positive. The technique used in section \ref{section:growth-entropy} consists in using the exponential growth of the number of periodic orbits in different homotopy classes to establish the exponential growth of $n^{T,\delta}_X$, for sufficiently small $\delta$.

One motivation for studying the topological entropy of $3$-dimensional Reeb flows is that positivity of $h_{top}$ for such a flow implies that it has a rich orbit structure. This follows from the following fundamental result of Katok which is found in supplement S.5 of \cite{Hasselblatt-Katok}.

\begin{thm*} [Katok]
If $\phi$ is a smooth flow on a closed oriented 3-manifold generated by a non-vanishing vector field, then $\phi$ has positive topological entropy if, and only if, there exists a “horseshoe” as a subsystem of the flow. As a consequence, the number of hyperbolic periodic orbits of $\phi$ grows exponentially with respect to the period.
\end{thm*}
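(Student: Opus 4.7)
The plan is to reduce the $3$-dimensional flow statement to the surface-diffeomorphism case treated by Katok in his original paper on Lyapunov exponents and hyperbolic periodic points, and then extract the quantitative conclusion on the growth of hyperbolic periodic orbits from the symbolic dynamics of the horseshoe. The ``if'' direction is the easy half: a horseshoe subsystem is topologically conjugate (after suspension) to a full shift on finitely many symbols, which has topological entropy $\log k$ for some $k\geq 2$; by monotonicity of $h_{\rm top}$ under passage to invariant subsystems, the flow $\phi$ inherits $h_{\rm top}(\phi)\geq\log k/T>0$, where $T$ is the return time used to suspend.

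For the ``only if'' direction I would argue as follows. Since $X$ is non-vanishing and $M$ is compact, one can cover $M$ by finitely many flow boxes and pick local smooth $2$-dimensional cross sections $\Sigma_i$ transverse to $X$; for any invariant measure of positive entropy the Poincar\'e first-return map $f$ to some $\Sigma=\bigsqcup\Sigma_i$ (defined almost everywhere on the relevant flow-box union) is a $C^{\infty}$ diffeomorphism between open subsets of $\Sigma$. Abramov's formula relates the measure-theoretic entropy of $f$ to that of $\phi$ via the mean return time, so positivity is preserved when passing from $\phi$ to $f$. By the variational principle, $h_{\rm top}(\phi)>0$ yields an ergodic $\phi$-invariant Borel probability measure $\mu$ with $h_\mu(\phi)>0$, and hence an $f$-invariant measure with positive entropy on $\Sigma$. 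Katok's theorem for $C^{1+\alpha}$ surface diffeomorphisms (proved via Pesin theory: Oseledec exponents are nonzero by Ruelle's inequality $h_\mu\leq\int\log^+\|Df\|\,d\mu$, so $\mu$-almost every point is hyperbolic, Pesin blocks give local stable/unstable manifolds, and the closing/shadowing lemma together with a Markov-type construction on a Pesin set of positive measure produces a topological horseshoe $\Lambda\subset\Sigma$ for some iterate $f^N$) now delivers a compact $f^N$-invariant set on which $f^N$ is conjugate to a full shift. Suspending $\Lambda$ under the flow yields the desired horseshoe subsystem for $\phi$.

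Finally, the corollary on exponential growth of hyperbolic periodic orbits follows immediately from the symbolic model: the shift $\sigma$ on $k$ symbols has exactly $k^n$ fixed points for $\sigma^n$, all hyperbolic under the conjugacy with $f^N|_\Lambda$ (uniform hyperbolicity of the horseshoe). Translating back through the suspension, the Reeb-time periods of these orbits grow linearly in $n$ while their count grows like $k^n$, so one obtains a constant $c>0$ such that the number of hyperbolic periodic orbits of $\phi$ of period at most $T$ is bounded below by $e^{cT}$ for all large $T$.

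The main obstacle, and the reason Katok's theorem is deep, lies in the construction invoked in the middle paragraph: producing a genuine horseshoe from an abstract measure of positive entropy. Ruelle's inequality and Oseledec give hyperbolicity almost everywhere, but Pesin blocks are only measurable (not compact-invariant), stable/unstable manifold sizes shrink along orbits, and one must use the Katok closing lemma together with a careful box-construction on a density point of a Pesin set to build a compact invariant hyperbolic set with Markov structure. This is where $C^{1+\alpha}$ regularity (automatic here since $\phi$ is $C^\infty$) is essential, and where the argument genuinely uses the two-dimensionality of the cross section.
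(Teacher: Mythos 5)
The paper does not prove this statement at all: it is quoted as a known theorem of Katok, with a pointer to Supplement S.5 of the Hasselblatt--Katok book, so there is no internal proof to compare against. Your sketch reproduces, in outline, exactly the argument of that cited source: the easy direction via entropy of the symbolic model and monotonicity under passage to invariant subsystems, and the hard direction via an ergodic measure of positive entropy, Ruelle's inequality and Oseledec to get nonzero exponents, Pesin blocks, the Katok closing/shadowing construction of a horseshoe, and the count of periodic points in the shift to get exponential growth of hyperbolic orbits. Two caveats are worth flagging. First, your reduction to a surface diffeomorphism via a finite union of local cross sections is not a literal application of the surface-diffeomorphism theorem: the return map is only piecewise defined and not a diffeomorphism of a closed surface, so one either has to rerun the (local) Pesin/closing arguments for this return system or, as Katok and the supplement do, work with the flow version directly; since the vector field is non-vanishing there are no rest points, which is what makes the flow version go through. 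Second, you produce a set conjugate to a full shift, whereas the statement as used in the paper only requires a compact invariant set finite-to-one semi-conjugate to the suspension of a subshift of finite type; your stronger conclusion is fine, and your periodic-orbit count (with the harmless division by the period when passing from periodic points to orbits, and the bounded suspension times) does yield the exponential growth claimed. Note also that Katok's theorem in full gives horseshoes of entropy arbitrarily close to $h_{\rm top}(\phi)$, hence orbit growth rate at least $h_{\rm top}(\phi)$; your version only extracts some positive exponential rate, but that is all the quoted statement asserts.
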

Here a “horseshoe” denotes a compact invariant set where the dynamics is semi-conjugate to that of the suspension of a subshift of finite type by a finite-to-one map.

\section{Topological entropy and blow up along periodic orbits} \label{section:blowup}

Let $Y$ be a closed $3$-manifold, $X$ a smooth non-vanishing vector field on $Y$ and $\phi^t$ its flow. Let $\mathcal{P}=\{P_1,...,P_n\}$ be a finite collection of distinct simple periodic orbits of $\phi^t$.
We denote by $T_j$ the primitive period of the orbit $P_j$.

We consider the blow up of the manifold $Y$ along the elements of $\mathcal{P}$; our description follows that of \cite{Umberto1}. For this we first choose for each $1\leq j \leq n$ a tubular neighbourhood $V_j$ of $P_j$ and an orientation preserving diffeomorphism
\begin{equation}
\Psi_j: V_j \to \mathbb{R}/T_j\mathbb{Z} \times \mathbb{D},
\end{equation}
such that $\Psi_j(\phi^t(p))=(t,0)$ for all $p\in P_j$. On $V_j \setminus P_j$ we have tubular polar coordinates $(t,r,\theta) \in \mathbb{R}/T_j\mathbb{Z} \times (0,1] \times \mathbb{R}/2\pi\mathbb{Z}$ coming from the identification $\Psi^{-1}(t,re^{i\theta})\simeq (t,r,\theta)$.

We then define
\begin{equation}
Y_{\mathcal{P}}:= \bigg\{ (Y \setminus \mathcal{P}) \sqcup  \big(\bigsqcup_{j=1}^n \mathbb{R}/T_j\mathbb{Z} \times [0,1] \times \mathbb{R}/2\pi\mathbb{Z}\big)   \bigg\},
\end{equation}
where, for each $1\leq j\leq n$, we identify the points $(t,r,\theta)\in \mathbb{R}/T_j\mathbb{Z} \times [0,1] \times \mathbb{R}/2\pi\mathbb{Z}$ with $\Psi_j^{-1}(t,re^{i\theta})\in V_j \setminus P_j$. We remark that $ Y_{\mathcal{P}}$ is a compact $3$-manifold with boundary, and that $\partial  Y_{\mathcal{P}}$ is a collection of $2$-dimensional tori $T_j$, each $T_j$ being the blow up of the orbit $P_j$.

Let $\Pi: Y_{\mathcal{P}} \to Y$ be the smooth projection which is the extension to all of $Y_{\mathcal{P}}$ of the identity map $i: Y \setminus \mathcal{P} \to Y \setminus \mathcal{P} $ where the domain is seen as a subset of $ Y_{\mathcal{P}}$ and the target as a subset of $ Y$. It is clear that $\Pi$ is surjective.

As shown in \cite{Umberto1}, there is a unique well-defined smooth non-vanishing vector field $\widehat{X}$ on $ Y_{\mathcal{P}}$ which projects via $\Pi$ to the vector-field $X$ on $Y$. Since the $P_j$ are periodic orbits of $X$, it follows that $\widehat{X}$ is tangent to the tori $T_j$. We denote by $\widehat{\phi}^t$ the flow of $\widehat{X}$.

The following result is due to Bowen \cite{Bowen}. 
\begin{thm} \label{theorem-Bowen}
For the flows $\phi^t$ and $\widehat{\phi}^t$ we have $$h_{\rm top}(\phi^t)= h_{\rm top}(\widehat{\phi}^t).$$
\end{thm}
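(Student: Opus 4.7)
The plan is to prove the two inequalities $h_{\rm top}(\phi^t)\le h_{\rm top}(\widehat{\phi}^t)$ and $h_{\rm top}(\widehat{\phi}^t)\le h_{\rm top}(\phi^t)$ separately. The first is immediate from the construction of the blow-up: $\Pi: Y_{\mathcal{P}}\to Y$ is a continuous surjection satisfying $\Pi\circ\widehat{\phi}^t=\phi^t\circ\Pi$, so $\phi^t$ is a topological factor of $\widehat{\phi}^t$. By the standard monotonicity of topological entropy under factor maps (concretely, uniform continuity of $\Pi$ lets one lift any $(T,\epsilon)$-separated set for $\phi^t$ in $Y$ to a $(T,\delta(\epsilon))$-separated set for $\widehat{\phi}^t$ in $Y_{\mathcal{P}}$ of the same cardinality) one obtains $h_{\rm top}(\phi^t)\le h_{\rm top}(\widehat{\phi}^t)$.

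For the reverse inequality the main input is Bowen's classical inequality for the entropy of an extension. Applied to the factor map $\Pi$ it reads
\begin{equation*}
h_{\rm top}(\widehat{\phi}^t)\;\le\;h_{\rm top}(\phi^t)\;+\;\sup_{y\in Y} h\!\left(\widehat{\phi}^t,\Pi^{-1}(y)\right),
\end{equation*}
where $h(\widehat{\phi}^t,E)$ denotes the relative entropy computed using $(T,\delta)$-separated subsets contained in $E$. It is therefore enough to show that every fibre of $\Pi$ contributes zero relative entropy. For $y\in Y\setminus\mathcal{P}$ the fibre $\Pi^{-1}(y)$ is a single point, so the contribution vanishes trivially. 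For $y\in P_j$ the fibre is a circle contained in the $\widehat{\phi}^t$-invariant torus $T_j$, and the elementary inclusion bound yields
\begin{equation*}
h\!\left(\widehat{\phi}^t,\Pi^{-1}(y)\right)\;\le\;h_{\rm top}\!\left(\widehat{\phi}^t\big|_{T_j}\right).
\end{equation*}

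The remaining and, in my view, most delicate step is to verify that $h_{\rm top}(\widehat{\phi}^t|_{T_j})=0$ for each $j$. The restriction of $\widehat{X}$ to $T_j$ is tangent to $T_j$ and, since $\widehat{X}$ is nowhere vanishing on $Y_{\mathcal{P}}$, also nowhere vanishing on $T_j$; hence $\widehat{\phi}^t|_{T_j}$ is a smooth nowhere vanishing flow on the $2$-torus. I would then invoke the classical fact that every smooth flow on a closed surface has vanishing topological entropy, which can also be argued in our setting from Katok's theorem recalled in Section \ref{section:background}: positivity of $h_{\rm top}$ would force a horseshoe subsystem, which cannot be embedded in a surface flow for dimensional reasons. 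Combining this vanishing with Bowen's inequality and the easy direction above yields $h_{\rm top}(\widehat{\phi}^t)=h_{\rm top}(\phi^t)$, as claimed.
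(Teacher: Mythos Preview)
Your proof is correct, and both your argument and the paper's share the same easy direction (semiconjugacy via $\Pi$) and the same crucial ingredient for the hard direction, namely $h_{\rm top}(\widehat{\phi}^t|_{T_j})=0$. The routes to the reverse inequality, however, are genuinely different. The paper argues measure-theoretically via the variational principle: any $\widehat{\phi}^t$-ergodic measure $\mu$ with $H(\widehat{\phi}^t,\mu)>0$ must assign measure zero to each $T_j$ (otherwise $\mu$ would be supported on $T_j$, forcing $H=0$), so $\mu$ lives on the open set where $\Pi$ is a diffeomorphism and can be pushed forward to a $\phi^t$-invariant measure with the same entropy. You instead invoke Bowen's inequality for extensions and bound the fibre entropy directly, using that non-singleton fibres sit inside the invariant tori $T_j$. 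Your approach is arguably more self-contained (no ergodic theory or variational principle needed), while the paper's approach illustrates why the boundary is dynamically negligible from the measure-theoretic viewpoint. One small remark: the Katok theorem quoted in Section~\ref{section:background} is stated for flows on $3$-manifolds, so it does not literally apply to $\widehat{\phi}^t|_{T_j}$; the vanishing of entropy for smooth non-singular flows on the $2$-torus is, as you say, classical, and the paper obtains it by citing \cite{Young}.
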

\proof: The flows  $\phi^t$ and $\widehat{\phi}^t$ satisfy $\Pi\circ \widehat{\phi}^t =\phi^t \circ \Pi $. Because $\Pi$ is surjective we conclude that $\phi^t$ is semi-conjugate to $\widehat{\phi}^t$ via $\Pi$, and it is well-know that this implies  $h_{\rm top}(\phi^t) \leq h_{\rm top}(\widehat{\phi}^t).$

To obtain the reverse inequality one proceeds as follows. By the variational principle for topological entropy we have $$h_{\rm top}(\widehat{\phi}^t) = \sup_{\mu \in \mathfrak{M}_{\rm erg}(\widehat{\phi}^t)} H(\widehat{\phi}^t, Y_{\mathcal{P}},\mu),$$ where of $H(\widehat{\phi}^t, Y_{\mathcal{P}},\mu)$ denotes the measure theoretical entropy of $\widehat{\phi}^t$ with respect to a $\widehat{\phi}^t$-invariant probability measure $\mu$, and $\mathfrak{M}_{\rm erg}(\widehat{\phi}^t)$ denotes the set of $\widehat{\phi}^t$-ergodic measures. 

If $h_{\rm top}(\widehat{\phi}^t)=0$ then there's nothing to be proved. So we assume that this is not the case and, applying the variational principle for $h_{\rm top}$, let $\mu \in \mathfrak{M}_{\rm erg}(\widehat{\phi}^t)$ be such that $H(\widehat{\phi}^t, Y_{\mathcal{P}},\mu)>0$. 

The tori $T_j$ for $j\in \{1,...,n\}$ are  invariant by $\widehat{\phi}^t$. We claim that $\mu(T_j) = 0$ for every $j\in \{1,...,n\}$. Fix $j\in \{1,...,n\}$.
Since $\mu$ is $\widehat{\phi}^t$-ergodic and $T_j$ is an invariant set of $\widehat{\phi}^t$ we know that $\mu(T_j)$ is either $0$ or $1$. If we had $\mu(T_j)=1$ we would conclude that the support of $\mu$ is $T_j$. This implies that $H(\widehat{\phi}^t, Y_{\mathcal{P}},\mu)$ equals the measure theoretical entropy $H(\widehat{\phi}^t, T_j,\mu)$ of the restriction of $\widehat{\phi}^t$ to $T_j$ with respect to the probability measure $\mu$ restricted to $T_j$.
Since $\widehat{X}$ restricted to $T_j$ is a smooth non-vanishing vector-field on $T_j$ we know from \cite{Young} that $h_{\rm top}(\widehat{\phi}_{T_j}^t)=0$, which implies that $H(\widehat{\phi}^t, T_j,\mu)=0$. But this contradicts our assumption that   $H(\widehat{\phi}^t, Y_{\mathcal{P}},\mu)=H(\widehat{\phi}^t, T_j,\mu) $ is positive.
It follows that $\mu(T_j) = 0$ for every $j\in \{1,...,n\}$.

Since $\mu(\partial Y_{\mathcal{P}})=0 $ we conclude that $H(\widehat{\phi}^t, Y_{\mathcal{P}},\mu)= H(\widehat{\phi}^t, Y_{\mathcal{P}}\setminus \partial Y_{\mathcal{P}},\mu)$. Let $\nu$ be the probability measure on $Y \setminus \mathcal{P}$ obtained by pulling back $\mu$ by the map $\Pi^{-1}: Y \setminus \mathcal{P}\to Y_{\mathcal{P}}\setminus \partial Y_{\mathcal{P}} $. Clearly $\nu$ extends to a Borel probability measure on $Y$ such that $\nu(\mathcal{P})=0$.

Since $\Pi\circ \widehat{\phi}^t =\phi^t \circ \Pi $ and the measure $\nu$ is $\phi^t$-invariant, the measurable dynamical systems $(\widehat{\phi}^t, Y_{\mathcal{P}}\setminus \partial Y_{\mathcal{P}},\mu)$ and $(\phi^t, Y \setminus \mathcal{P}, \nu)$ are isomorphic, which implies that $H(\widehat{\phi}^t, Y_{\mathcal{P}}\setminus \partial Y_{\mathcal{P}},\mu) =(\phi^t, Y \setminus \mathcal{P}, \nu) $. Since $\nu(\mathcal{P})=0$ it follows that $H(\phi^t, Y \setminus \mathcal{P}, \nu) = H(\phi^t, Y, \nu)$. We then finally conclude that $ H(\phi^t, Y, \nu) = H(\widehat{\phi}^t, Y_{\mathcal{P}},\mu)$. 

Summing up we have, assuming that $h_{\rm top}(\widehat{\phi}^t)>0$, produced for each $\mu \in \mathfrak{M}_{\rm erg}(\widehat{\phi}^t)$ such that $H(\widehat{\phi}^t, Y_{\mathcal{P}},\mu)>0$ a $\phi^t$ invariant measure $\nu$ such that $ H(\phi^t, Y, \nu) = H(\widehat{\phi}^t, Y_{\mathcal{P}},\mu)$. Applying the variational principle for $h_{\rm top}(\widehat{\phi}^t)$ and $h_{\rm top}({\phi}^t)$ we conclude that $h_{\rm top}(\widehat{\phi}^t)\leq h_{\rm top}({\phi}^t)$ which completes the proof of the theorem. \qed

\section{Pseudoholomorphic curves}\label{section2}

Contact homology on the complement of Reeb orbits is defined in the spirit of the SFT-invariants introduced in~\cite{SFT}. To define it we use pseudoholomorphic curves in symplectizations and symplectic cobordisms. Pseudoholomorphic curves were introduced in symplectic manifolds by Gromov~\cite{Gr}, and in symplectizations by Hofer~\cite{H}; see also~\cite{CPT}.
%as a general reference for pseudo holomorphic curves in symplectic cobordisms.

\subsection{Curves in symplectizations and symplectic cobordisms}

\subsubsection{Cylindrical almost complex structures} Let $(Y,\xi)$ be a smooth contact $3$-manifold and $\lambda$ a contact form on $(Y,\xi)$. The symplectization of $(Y,\xi=\ker \lambda)$ is the product $\mathbb{R} \times Y$ equipped with the symplectic form $d(e^s \lambda)$, where $s$ denotes the $\R$-coordinate on $\mathbb{R} \times Y$.

The 2-form $d\lambda$ restricts to a symplectic form on the vector bundle $\xi\to Y$ and it is well known that the set  $\mathfrak{j}(\lambda)$ of $d\lambda$-compatible complex structures on $\xi$ is non-empty and contractible in the $C^\infty$-topology. The $d\lambda$-compatibility of $j\in\mathfrak{j}(\lambda)$ means that $d\lambda (\cdot, j \cdot)$ is a positive-definite inner product on $\xi$ and hence 
\begin{equation}\label{metric} 
\left<u,v\right>_j:=\lambda(u)  \lambda(v) + d\lambda(\pi u, j \pi v) \qquad u,v \in TY
\end{equation} 
is a Riemannian metric on $Y$. Here $\pi:TY \to \xi$ stands for the projection  along the Reeb vector field. Note that $\mathfrak{j}(\lambda)$ depends, in fact, only on the co-oriented contact structure $\xi$, whose orientation is induced by $d\lambda$.

For $j \in \mathfrak{j}(\lambda)$ one defines an $\mathbb{R}$-invariant almost complex structure $J$ on $\mathbb{R} \times Y$ by demanding that
\begin{equation} \label{eq16}
 J \cdot \partial_s = R_\lambda \mbox{ and }  J|_\xi = j.
\end{equation} One checks that $J$ is $d(e^s \lambda)$-compatible. We denote by $\mathcal{J}(\lambda)$ the space of almost complex structures $J$ on $\mathbb{R} \times Y$ which satisfy \eqref{eq16} for some $j \in \mathfrak{j}(\lambda)$.

\subsubsection{Exact symplectic cobordisms}\label{sec_exact}

Let $\lambda^+$ and $\lambda^-$ be contact forms on the contact manifold $(Y,\xi)$. There exists a smooth function $f:Y \to (0,+\infty)$ satisfying $\lambda^+ = f \lambda^-$. Assume that $f>1$ pointwise. 

Choose $\chi:\R \times Y \to \R$ smooth satisfying
\begin{equation}\label{chi_symp_form}
\begin{aligned}
& \chi  = e^sf \mbox{ on } [1,+\infty)\times Y,\\
& \chi  = e^s  \mbox{ on } (-\infty,0]\times Y,\\
& \partial_s\chi  > 0 \mbox{ on } \R \times Y,
\end{aligned}
\end{equation}
where $s$ is the $\R$-coordinate.  It then follows that 
\begin{equation}\label{exact_symp_form_on_cob}
\varpi:= d(\chi \lambda_-)
\end{equation}
is a symplectic form on $\R \times Y$.

%\begin{lem}
%Let $\chi_0,\chi_1$ be smooth functions satisfying~\eqref{chi_symp_form}. There is a smooth function $\varphi:\R\times Y\to \R$ such that $\chi_1(\varphi(s,p),p)=\chi_0(s,p)$ for all $(s,p)\in\R\times Y$. In particular,~$\varphi$ satisfies $\varphi(s,p)=s$ on $(\R\setminus[0,1])\times Y$ and $\partial_s\varphi>0$ on $\R\times Y$.
%\end{lem}

%\begin{proof}
%For all $p\in Y$ and $j\in\{0,1\}$ the map $s\mapsto\chi_j(s,p)$ defines an increasing diffeomorphism $\R\to(0,+\infty)$. Hence we can define $\varphi(s,p)$ by requiring that the identity $\chi_1(\varphi(s,p),p)=\chi_0(s,p)$ holds for all $(s,p)$. Smoothness of $\varphi$ follows from the fact that each $\chi_j(\cdot,p)$ is a smooth family of diffeomorphisms $\R \to (0,+\infty)$ parametrized by $p\in Y$.
%\end{proof}

%\begin{cor}\label{cor_exact_symp_forms}
%Let $\chi_0,\chi_1$ satisfy~\eqref{chi_symp_form} and let $\varphi$ be given by applying the previous lemma to $\chi_0,\chi_1$. Then the diffeomorphism $\Psi:\R\times Y \to \R\times Y$, $\Psi(s,p)=(\varphi(s,p),p)$ satisfies $\Psi^*d(\chi_1\lambda_-)=d(\chi_0\lambda_-)$.
%\end{cor}

%\begin{proof}
%Direct consequence of the easily verifiable identity $\Psi^*(\chi_1\lambda_-)=\chi_0\lambda_-$.
%\end{proof}

We call $(\R \times Y, \varpi)$ an exact symplectic cobordism from $\lambda_+$ to $\lambda_-$. On such a cobordism $(\R \times Y, \varpi)$ we consider almost complex structures $\bar J$ defined as follows: fix any two choices $J^+ \in \mathcal{J}(\lambda^+)$, $J^- \in \mathcal{J}(\lambda^-)$ and choose $\bar J$ satisfying
$$
\begin{aligned}
\bar J & =  J^+   \mbox{ on }  [1,+\infty)\times Y,  \\
\bar J & =  J^-   \mbox{ on }  (-\infty,0]\times Y,  \\
\bar J & \mbox{ is compatible with } \varpi \mbox{ on } [0,1]\times Y.
\end{aligned}
$$
The space of such almost complex structures $\bar J$ is denoted by $\mathcal{J}_\varpi(J^+,J^-)$. Finally we note that standard arguments will show that $\mathcal{J}_\varpi(J^+,J^-)$ is non-empty and contractible in the $C^\infty$-topology.
%It follows from Corollary~\ref{cor_exact_symp_forms} that if $\varpi_0,\varpi_1$ are two symplectic forms as in~\eqref{exact_symp_form_on_cob} then there is a diffeomorphism $\Psi$ of $\R\times Y$ supported in $[0,1]\times Y$ such that the map $\bar J \mapsto \Psi^*{\bar J}$ establishes a bijection between $\mathcal{J}_{\varpi_0}(J^+,J^-)$ and $\mathcal{J}_{\varpi_1}(J^+,J^-)$. Later it will be important to notice that $\Psi$ can be chosen to preserve the $Y$-component. 

\subsubsection{Splitting families} \label{splitting}

Let $\lambda^+$, $\lambda$ and $\lambda^-$ be contact forms on a closed contact $3$-manifold $(Y,\xi)$. Writing $\lambda_+=f_+ \lambda$ and $\lambda_-=f_- \lambda$ for smooth functions $f_+,f_-:Y \to (0,+\infty)$ we assume that $f_+>1 > f_-$ pointwise.

Fix $J_+ \in \mathcal{J}(\lambda^+), J \in \mathcal{J}(\lambda)$ and $J_- \in \mathcal{J}(\lambda^-)$. Let $(\R\times Y,\varpi_+)$ be an exact symplectic cobordism from $\lambda_+$ to $\lambda$ and let $(\R\times Y,\varpi_-)$ be an exact symplectic cobordism from $\lambda$ to $\lambda_-$, as defined in the previous paragraph. Next we choose $\bar J_+ \in \mathcal{J}_{\varpi_+}(J^+,J)$ and $\bar J_- \in \mathcal{J}_{\varpi_-}(J,J^-)$ and then define $\widehat J_R$ by

\begin{equation}\label{split}
\begin{aligned}
\widehat J_R &= (T_{-R})^*\bar J_+\mbox{ on } [0,+\infty)\times Y,\\
\widehat J_R &= (T_{R+1})^*\bar J_-\mbox{ on }  (-\infty,0]\times Y,\\
\end{aligned}
\end{equation} where $T_R(s,p):=(s+R,p), \ \forall (s,p)\in \R \times Y$. By definition, $\widehat J_R$ is smooth and agrees with $J$ on the neck region $[-R,R]\times Y$.

\subsubsection{Pseudoholomorphic curves} \label{pseudohol}

In this section $(Y,\xi)$ denotes a closed contact $3$-manifold and $\Lambda \subset Y$ a Legendrian link. If $\lambda$ is a contact form on $(Y,\xi)$ then $\R\times\Lambda$ is an exact Lagrangian in $\R\times Y$ with respect to any symplectic form $d(h\lambda)$ where $h:\R\times Y\to \R$ satisfies $h>0$, $\partial_sh>0$.

Let $(S,j)$ be a compact Riemann surface (possibly with boundary) and let $J \in \mathcal{J}(\lambda)$. A finite-energy pseudoholomorphic curve in $(\R \times Y,J)$ with boundary in $\R\times\Lambda$ is a smooth map $$ \wtil=(a,w): S \setminus \Gamma \to \mathbb{R} \times Y $$ where $\Gamma\subset S$ is a finite set, that satisfies
$$\begin{aligned}
& \bar \partial_J \wtil := \frac{1}{2}\left(d\wtil + J(\wtil) \circ d\wtil \circ j\right)=0, \\
& \wtil(\partial S \setminus \Gamma) \subset \R\times\Lambda,
\end{aligned}$$
and has finite Hofer energy
\begin{equation}
0 < E(\widetilde{w}):= \sup_{q \in \mathcal{E}} \int_{S \setminus \Gamma} \widetilde{w}^*d(q\lambda)<+ \infty.
\end{equation}
Here  
\begin{equation}\label{energia} 
\mathcal{E}= \{ q: \mathbb{R} \to [0,1] \mbox{ smooth }; q' \geq 0\}.
\end{equation}

Let $\lambda_+,\lambda_-$ be contact forms on $(Y,\xi)$ and consider an exact symplectic cobordism $(W=\mathbb{R} \times Y, \varpi)$ from $\lambda^+$ to $\lambda^-$ of the kind defined in~\ref{sec_exact}. Consider $\bar J \in \mathcal{J}_\varpi(J^+,J^-)$ where $J^+ \in \mathcal{J}(\lambda^+)$ and $J^- \in \mathcal{J}(\lambda^-)$. A finite-energy pseudoholomorphic curve with boundary in $\R\times\Lambda$ is a smooth map $$ \wtil=(a,w): S \setminus \Gamma \to \R \times Y $$ where $\Gamma \subset S$ is a finite set, that satisfies
$$
\begin{aligned}
& \bar \partial_{\bar J} \wtil := \frac{1}{2}\left(d\wtil + \bar J(\wtil) \circ d\wtil \circ j\right)=0, \\
& \wtil(\partial S \setminus \Gamma) \subset \R\times\Lambda,
\end{aligned}
$$
and has finite energy
\begin{equation}\label{encob}
0<E_{\lambda^+}(\wtil) + E_c(\wtil) + E_{\lambda^-} (\wtil) < +\infty
\end{equation}
where
$$
\begin{aligned}
&E_{\lambda^+}(\wtil) = \sup_{q \in \mathcal{E}} \int_{\wtil^{-1}([1,+\infty)\times Y)} \wtil^*d(q\lambda^+) \\
&E_c(\wtil) = \int_{\wtil^{-1}([0,1]\times Y)} \wtil^*\varpi \\
&E_{\lambda^-}(\wtil) = \sup_{q \in \mathcal{E}} \int_{\wtil^{-1}((-\infty,0]\times Y)} \wtil^*d(q\lambda^-)
\end{aligned}
$$

Let $\widehat J_R$ be a splitting almost complex structure as in Section~\ref{splitting}. These are defined by~\eqref{split}, where $J_\pm\in\J(\lambda_\pm)$, $J\in\J(\lambda)$, $\bar J_+ \in \mathcal{J}_{\varpi_+}(J^+,J)$ and $\bar J_- \in \mathcal{J}_{\varpi_-}(J,J^-)$. Finite-energy $\widehat J_R$-curves $\wtil$ are defined analogously but we need a modified version of the finite-energy condition:
\begin{equation}
0<E_{\lambda^+} (\wtil)+ E_{\lambda,\lambda^+}(\wtil) + E_{\lambda}(\wtil)+ E_{\lambda^-,\lambda}(\wtil)+ E_{\lambda^-}(\wtil)  < +\infty
\end{equation}
where
$$
\begin{aligned}
&E_{\lambda^+}(\wtil) = \sup_{q \in \mathcal{E}} \int_{\wtil^{-1}([R+1,+\infty)\times Y)} \wtil^*d(q\lambda^+) \\
&E_{\lambda,\lambda^+}(\wtil) =\int_{\wtil^{-1}([R,R+1]\times Y)} \wtil^*(T_{-R})^*\varpi_+ \\
&E_{\lambda}(\wtil) = \sup_{q \in \mathcal{E}} \int_{\wtil^{-1}([-R,R]\times Y)} \wtil^*d(q\lambda) \\
&E_{\lambda^-,\lambda}(\wtil) = \int_{\wtil^{-1}([-R-1,-R]\times Y)} \wtil^*(T_{R+1})^*\varpi_- \\
&E_{\lambda^-}(\wtil) = \sup_{q \in \mathcal{E}} \int_{\wtil^{-1}((-\infty,-R-1]\times Y)} \wtil^*d(q\lambda^-) \\
\end{aligned}
$$ 
See~\eqref{energia} and Section~\ref{splitting} for definitions.

The elements of $\Gamma \subset S$ are called punctures of $\wtil$. We call elements of $\Gamma_\partial := \Gamma \cap \partial S$ boundary punctures, and elements of $\Gamma \setminus \Gamma_\partial$ interior punctures. Assume that all punctures are non-removable. We only describe the behaviour of finite-energy pseudoholomorphic curves near non-removable punctures on exact symplectic cobordisms, since the behaviour in the cases of symplectizations and splitting symplectic cobordisms is the same.

According to \cite{Ab,H,HWZ} the punctures are classified in four different types. Before presenting this classification we introduce some notation:
\begin{itemize}
\item If $z\in \Gamma \setminus \Gamma_\partial$ then a neighbourhood of $z$ in $(S,j)$ is bi-holomorphic to the unit disk $\mathbb{D}\subset\C$ in such a way that $z\equiv 0$. Then $\mathbb{D}\setminus \{0\}$ is bi-holomorphic to $[0,+\infty) \times \R / \Z$ via the map $s+it \mapsto e^{-2\pi(s+it)}$. We always write $\wtil=\wtil(s,t)$ near $z\in \Gamma \setminus \Gamma_\partial$ using these exponential holomorphic coordinates $(s,t)$ in $[0,+\infty)\times \R/ \Z$.
\item If $z\in \Gamma_\partial$ then a neighbourhood of $z$ in $(S,j)$ is bi-holomorphic to the half-disk $\D^+ = \{|z|\leq 1,\Im(z) \geq 0\}$ with $z \equiv 0$ and $\D^+ \setminus \{0\}$ is bi-holomorphic to $([0,+\infty) \times [0,1],i_0)$ via the map $s+it \mapsto -e^{-\pi(s+it)}$. We always write $\wtil=\wtil(s,t)$ near $z\in \Gamma_\partial$ using these exponential holomorphic coordinates $(s,t)$ in $[0,+\infty)\times [0,1]$.
\end{itemize}
The non-removable punctures of a finite energy pseudoholomorphic curve $\wtil=(a,w)$ are classified as follows:
\begin{itemize}
\item $z \in \Gamma \setminus \Gamma_\partial$ is a positive interior puncture, i.e., $a(z')\to + \infty$ as $z' \to z$ and given a sequence $s_n \to +\infty$ there exists a subsequence, still denoted $s_n$, and a periodic Reeb orbit $\gamma^+$ of $\lambda^+$ with period $T^+$ such that $w(s_n,\cdot)$ converges in $C^\infty$ to $\gamma^+(T^+\cdot)$ as $n\to +\infty$.
\item $z \in \Gamma \setminus \Gamma_\partial$ is a negative interior puncture, i.e., $a(z')\to - \infty$ as $z' \to z$ and given a sequence $s_n \to +\infty$ there exists a subsequence, still denoted $s_n$, and a periodic Reeb orbit $\gamma^-$ of $\lambda^-$ with period $T^-$ such that $w(s_n,\cdot)$ converges in $C^\infty$ to $\gamma^-(-T^-\cdot)$ as $n\to +\infty$.
\item $z \in \Gamma_\partial$ is a positive boundary puncture, i.e.,  $a(z')\to + \infty$ as $z' \to z$ and given a sequence $s_n \to + \infty$ there exists a subsequence, still denoted $s_n$, and a Reeb chord $\tau^+$ of  $\lambda^+$ from $\Lambda$ to itself with action $l^+$ such that $w(s_n,\cdot)$ converges in $C^\infty$ to $\tau^+(l^+ \cdot)$ as $n\to +\infty$.
\item $z \in \Gamma_\partial$ is a negative boundary puncture, i.e.,  $a(z')\to - \infty$ as $z' \to z$ and given a sequence $s_n \to + \infty$ there exists a subsequence, still denoted $s_n$, and a Reeb chord $\tau^-$ of  $\lambda^-$ from $\Lambda$ to itself with action $l^-$ such that $w(s_n,\cdot)$ converges in $C^\infty$ to $\tau^-(-l^-\cdot)$ as $n\to +\infty$.
\end{itemize}

Denote by $\T(\lambda,\Lambda)$ the set of Reeb chords of $\lambda$ from $\Lambda$ to itself. Recall that a chord $\tau \in \T(\lambda,\Lambda)$ with action $\ell$ is called transverse if the linearized flow in time $\ell$ maps $T_{\tau(0)}\Lambda$ to a complement of $T_{\tau(\ell)}\Lambda$ in $\xi$. If the Legendrian link $\Lambda = \Lambda_0 \cup \Lambda_1$ is written as a union of (not necessarily disjoint) Legendrian links $\Lambda_0,\Lambda_1$ then we denote by $\T(\lambda, \Lambda_0 \to \Lambda_1) \subset \T(\lambda,\Lambda)$ the set of Reeb chords from $\Lambda_0$ to $\Lambda_1$.

Denote by $\P(\lambda)$ the set of periodic Reeb orbits, where geometrically identical periodic orbits with distinct periods are distinguished. We may think of elements of $\P(\lambda)$ as equivalence classes of pairs $(\gamma,T)$ where $\gamma:\R\to Y$ is a $T$-periodic trajectory of $R_\lambda$, and two pairs $(\gamma_0,T_0)$, $(\gamma_1,T_1)$ are equivalent if, and only if, $\gamma_0(\R)=\gamma_1(\R)$ and $T_0=T_1$. A periodic orbit $\gamma = (\gamma,T) \in \P(\lambda)$ is called non-degenerate if the transverse Floquet multipliers of $\gamma$ in period $T$ are not equal to $1$.

\begin{defn}
For a boundary (interior) puncture $z$, if there is a sequence $s_n$ such that  $w(s_n,\cdot)$ converges to a given Reeb chord $\tau \in \T(\lambda,\Lambda)$ (or to a given periodic Reeb orbit~$\gamma \in \P(\lambda)$), we will say that $\tau$ (or $\gamma$) is an asymptotic limit of $\wtil$ at $z$. If all asymptotic limits at $z$ coincide with a certain chord $\tau$ (or with a certain periodic orbit $\gamma$) then we say that $\wtil$ is asymptotic to $\tau$ (or $\gamma$).
\end{defn}

\begin{rem}
If a non-degenerate periodic Reeb orbit $\gamma$ or a transverse Reeb chord $\tau$ is an asymptotic limit of $\wtil$ at a puncture $z$ then $\wtil$ is asymptotic at $z$ to $\gamma$ or $\tau$. This follows from results of~\cite{Ab,HWZ} where a much more detailed description of the convergence is given.
\end{rem}

For curves with boundary punctures we are particularly interested in the case where $(S \setminus \Gamma,j)$ is biholomorphic to $\mathbb{R} \times [0,1]$ with its standard complex structure. We call $\wtil$ a pseudoholomorphic strip. In this case the domain is equivalent to a disk with two punctures on the boundary.

Let $J\in \mathcal{J}(\lambda)$. Suppose that
%the a finite-energy strip $\wtil$ satisfies 
%\begin{equation}\label{boundary_condition_strips}
%\wtil(\R \times \{0\}) \subset \mathbb{R}\times \Lambda_0, \qquad\qquad \wtil(\R \times \{1\}) \subset \R \times \Lambda_1
%\end{equation}
%where 
$\Lambda = \Lambda_0 \cup \Lambda_1$ is written as a union of two disjoint Legendrian links $\Lambda_0,\Lambda_1$. For fixed Reeb chords $\tau_+,\tau_- \in \T(\lambda, \Lambda_0 \to \Lambda_1)$ we denote by $\mathcal{M}(J, \tau_+,\tau_-)$ the moduli space whose elements are equivalence classes of finite-energy pseudoholomorphic strips $\wtil: \R\times[0,1] \to (\mathbb{R}\times Y,J)$ such that
\begin{itemize}
\item $\wtil$ is positively asymptotic to $\tau_+$ at $+\infty \times [0,1]$;
\item $\wtil$ is negatively asymptotic to $\tau_-$ at $-\infty \times[0,1]$;
\item $\wtil$ satisfies
\begin{equation}\label{boundary_condition_strips}
\wtil(\R \times \{0\}) \subset \mathbb{R}\times \Lambda_0, \qquad\qquad \wtil(\R \times \{1\}) \subset \R \times \Lambda_1.
\end{equation}
\end{itemize}
Two such pseudoholomorphic strips $\wtil(s,t)$ and $\wtil'(s,t)$ represent the same element in $\mathcal{M}(J, \tau_+,\tau_-)$ if and only if $\wtil'(s,t) = \wtil(s+s_0,t)$ for some $s_0\in \R$.

Analogously, for fixed periodic orbits $\gamma_+,\gamma_-\in\P(\lambda)$ we denote by $\M(J,\gamma_+,\gamma_-)$ the moduli space whose elements are equivalence classes of finite-energy pseudoholomorphic cylinders $\wtil: \R\times\R/\Z \to (\mathbb{R}\times Y,J)$ such that
\begin{itemize}
\item $\wtil$ is positively asymptotic to $\gamma_+$ at $+\infty \times \R/\Z$;
\item $\wtil$ is negatively asymptotic to $\gamma_-$ at $-\infty \times \R/\Z$.
\end{itemize}
Here, and also in what follows, $\R\times\R/\Z$ is endowed with its standard complex structure. Two such pseudoholomorphic cylinders $\wtil(s,t)$ and $\wtil'(s,t)$ represent the same element in $\mathcal{M}(J,\gamma_+,\gamma_-)$ if and only if $\wtil'(s,t) = \wtil(s+s_0,t+t_0)$ for some $(s_0,t_0)\in \R\times\R/\Z$.

The moduli spaces $\M(J,\tau_+,\tau_-)$, $\M(J,\gamma_+,\gamma_-)$ admit an $\R$-action given by translations in the first component of $\R\times Y$. This is so because both $J$ both and $\R\times \Lambda$ are $\R$-invariant.

\begin{lem}\label{lem_moduli_action}
The following holds:
\begin{itemize}
\item If $C \in \M(J,\gamma_+,\gamma_-)$ is fixed by translation by a non-zero number then $\gamma_+=\gamma_-$ and $C$ is a cylinder over a periodic orbit.
\item If $C \in \M(J,\tau_+,\tau_-)$ is fixed by translation by a non-zero number then $\tau_+=\tau_-$ and $C$ is a strip over a chord.
\end{itemize}
\end{lem}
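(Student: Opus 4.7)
The plan is to exploit the finite-energy condition $\int w^*d\lambda<\infty$ together with the translation invariance coming from being a fixed point: the latter forces the integrand $|\pi\partial_tw|_j^2$ to be periodic with respect to a free $\R$-action, and a non-negative $L^1$ function that is periodic under a free action must vanish identically. This pins down $\wtil$ as a trivial cover of a Reeb orbit or chord.

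First I would unpack the fixed-point condition. For the cylinder case, $C=[\wtil=(a,w)]$ being fixed by translation by $c\neq 0$ means $(a+c,w)\sim (a,w)$ in $\M(J,\gamma_+,\gamma_-)$, so there exists $(s_0,t_0)\in\R\times\R/\Z$ with
\[
a(s+s_0,t+t_0)=a(s,t)+c,\qquad w(s+s_0,t+t_0)=w(s,t).
\]
The case $s_0=0$ must be ruled out first: iterating gives $a(s,t+nt_0)=a(s,t)+nc$, which is impossible because $a(s,\cdot)$ is continuous on the compact circle $\R/\Z$ and hence bounded, while $nc\to\pm\infty$. So $s_0\neq 0$, and the map $\Phi(s,t)=(s+s_0,t+t_0)$ acts freely on $\R\times\R/\Z$ with fundamental domain of finite area $|s_0|$.

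Next I would appeal to the standard identity for cylindrical $J$-holomorphic curves coming from the CR equations in the splitting \eqref{metric}: one has $w^*d\lambda=|\pi\partial_tw|_j^2\,ds\wedge dt$, and Stokes combined with the asymptotic behaviour yields
\[
\int_{\R\times\R/\Z}|\pi\partial_tw|_j^2\,ds\,dt \;=\; \int w^*d\lambda \;=\; T_+ - T_- \;<\; \infty,
\]
where $T_\pm$ are the periods of $\gamma_\pm$. Since $w\circ\Phi=w$, the integrand is $\Phi$-periodic; a non-negative $\Phi$-periodic function with finite integral over $\R\times\R/\Z$ must vanish. Thus $\pi\partial_tw\equiv 0$, hence also $\pi\partial_sw\equiv 0$ from $\pi\partial_sw=-j\pi\partial_tw$. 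So $w$ takes values inside the image of a single Reeb orbit of $\lambda$, and integrating $\partial_sa=\lambda(\partial_tw)$ identifies $\wtil$ as a trivial cylinder; in particular $\gamma_+=\gamma_-$. The strip case is entirely parallel and in one respect simpler: the equivalence defining $\M(J,\tau_+,\tau_-)$ involves only $s$-translations, so $s_0\neq 0$ is immediate from $c\neq 0$; and since $\Lambda_0,\Lambda_1$ are Legendrian the boundary contribution to $\int w^*d\lambda$ vanishes, giving $\int|\pi\partial_tw|_j^2=A(\tau_+)-A(\tau_-)<\infty$, after which the same $s_0$-periodicity argument forces $\wtil$ to be a strip over a chord with $\tau_+=\tau_-$.

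The main subtlety I expect is the $s_0=0$ case in the cylinder setting, which has no counterpart for strips and rests on the elementary but essential fact that a continuous function on a compact circle cannot satisfy $f(t+t_0)=f(t)+c$ for $c\neq 0$. Everything else is a clean application of the energy identity for cylindrical $J$-holomorphic curves and requires no compactness/bubbling input beyond the asymptotic behaviour already built into the definition of the moduli spaces.
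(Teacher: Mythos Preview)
Your proof is correct but follows a different route from the paper's. After establishing $s_0\neq 0$ (which the paper does by integrating $a(s,\cdot)$ over the circle rather than by your iteration argument, though both work), the paper does not invoke the $d\lambda$-energy at all. Instead it iterates the relation $u(s+ks_0,t+kt_0)=u(s,t)$ and passes to a subsequence $k_j\to+\infty$ with $k_jt_0\to t_+$; the asymptotic behaviour of $u$ at $+\infty$ then forces $u(s,t)=\gamma_+(T_+(t+t_+)+d)$ for all $(s,t)$, and the analogous limit $k_j\to-\infty$ gives $u(s,t)=\gamma_-(T_-(t+t_-)+d')$, whence $\gamma_+=\gamma_-$ and the cylinder is trivial. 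Your argument instead observes that the non-negative integrand $|\pi\partial_tw|_j^2$ is $\Phi$-periodic with finite total integral, hence vanishes, and then deduces triviality from $\pi\,dw\equiv 0$. The paper's approach is a touch more bare-handed, using only pointwise asymptotics; yours is more structural, going through the energy identity and the standard characterisation of trivial cylinders as exactly the curves with vanishing $d\lambda$-energy. Either way the $s_0=0$ case for cylinders is the only genuine wrinkle, and you handle it correctly.
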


\begin{proof}
We prove the first assertion. Let $C \in \M(J,\gamma_+,\gamma_-)$ be fixed by translation by $c\neq0$. Represent $C$ by a finite-energy cylinder $\util(s,t) = (a(s,t),u(s,t))$. We find $s_0\in \R$ and $t_0\in\R/\Z$ such that $a(s,t)+c=a(s+s_0,t+t_0)$ and $u(s,t)=u(s+s_0,t+t_0)$ holds for all $(s,t)$. First note that $s_0 \neq 0$, in fact if $s_0=0$ then for any $s\in\R$ $$ c + \int_0^1a(s,t)dt = \int_0^1 c + a(s,t) \ dt = \int_0^1 a(s,t+t_0)dt = \int_0^1a(s,t)dt $$ forcing $c$ to be zero, absurd. We only handle the case $s_0>0$, the case $s_0<0$ is analogous. For any $k\in\Z$ we have $u(s+ks_0,t+kt_0)=u(s,t)$. Let $k_j \to +\infty$ be a sequence such that $k_jt_0 \to t_+ \in\R/\Z$ as $j\to+\infty$. Taking the limit as $j\to+\infty$ in  the identity $u(s,t)=u(s+k_js_0,t+k_jt_0)$ we conclude that $u(s,t) = \gamma_+(T_+(t+t_+)+d)$ for some $d\in\R$, where we write $\gamma_+=(\gamma_+,T_+)$. Arguing analogously we conclude that $u(s,t) = \gamma_-(T_-(t+t_-)+d')$ for some $t_-,d'$, where $\gamma_-=(\gamma_-,T_-)$. Hence $\gamma_+=\gamma_-$ and $\util$ is a cylinder over a periodic orbit. The argument for strips is analogous and left to the reader.
\end{proof}

Now let $(\mathbb{R} \times Y,\varpi)$ be an exact symplectic cobordism from $\lambda^+$  to  $\lambda^-$ as in Section~\ref{sec_exact}. Let $J^+ \in \mathcal{J}(\lambda^+)$, $J^- \in \mathcal{J}(\lambda^-)$. Consider an almost complex structure $\bar J \in \mathcal{J}_\varpi(J^+,J^-)$ and two Reeb chords $\tau_+ \in  \T(\lambda^+,\Lambda_0 \to \Lambda_1)$, $\tau_- \in  \T(\lambda^-,\Lambda_0 \to \Lambda_1)$. We denote by $\mathcal{M}(\bar J,\tau_+,\tau_-)$
the space of equivalence classes of finite-energy pseudoholomorphic strips $\wtil: \R\times[0,1] \to (\mathbb{R}\times Y,\bar J)$ such that
\begin{itemize}
\item $\wtil$ is positively asymptotic to $\tau_+$ at $+\infty \times [0,1]$;
\item $\wtil$ is negatively asymptotic to $\tau_-$ at $-\infty \times[0,1]$;
\item $\wtil$ satisfies~\eqref{boundary_condition_strips}.
\end{itemize}
As above, two strips $\wtil(s,t)$ and $\wtil'(s,t)$ represent the same element in $\mathcal{M}(\bar J,\tau_+,\tau_-)$ if and only if $\wtil'(s,t) = \wtil(s+s_0,t)$ for some $s_0\in \R$.

For given $\gamma_+\in\P(\lambda^+)$ and $\gamma_-\in\P(\lambda^-)$ one may define moduli spaces $\M(\bar J,\gamma^+,\gamma^-)$ of finite-energy cylinders in exact cobordisms just as above, details are omitted.

\subsection{Conley-Zehnder index and Fredholm index} \label{section:index}

Let $(Y,\xi)$ be a contact $3$-manifold and $\Lambda_0$ and $\Lambda_1$ be disjoint Legendrian knots in $(Y,\xi)$. Choose orientations for $\Lambda_0$ and $\Lambda_1$. 

\subsubsection{The Conley-Zehnder index}

Let $\lambda$ be a contact form on $(Y,\xi)$ and let the Reeb chord $\tau\in\T(\lambda,\Lambda_0\to\Lambda_1)$ have length $\ell$. To define the Conley-Zehnder index of $\tau$ we first choose a non-vanishing section~$\eta$~of~$\tau^*\xi$ such that:
\begin{itemize}
\item $\eta(0)$ is tangent to $T_{\tau(0)} \Lambda_0$ and matches the orientation that we chose for $\Lambda_0$. 
\item $\eta(\ell)$ is tangent to $T_{\tau(\ell)}\Lambda_1$ and matches the orientation that we chose for $\Lambda_1$. 
\end{itemize}
Using $\eta$ we can define a symplectic trivialization $\Psi_\tau: [0,\ell] \times \mathbb{R}^2 \to \tau^*\xi$ which satisfies $\Psi_\tau(\{t\} \times \mathbb{R})= \mathbb{R}\eta(t)$ for every $t \in [0,\ell]$. Transporting the Lagrangian subspace $T_{\tau(0)} \Lambda_0$ by the Reeb flow $\phi_\lambda$ we obtain the path $\{(D\phi^t_\lambda)_{\tau(0)} T_{\tau(0)} \Lambda \mid t \in [0,\ell]\}$. Using the symplectic trivialisation $\Psi_\tau$ we can represent this path as a path $\{L_{\Psi_\tau}(t) \mid t\in [0,\ell] \}$ of Lagrangian subspaces of $(\mathbb{R}^2,\omega_0)$. The path $L_{\Psi_\tau}$ is not closed. In order to make it closed we concatenate it with a path that turns $L_{\Psi_\tau}(\ell)$ to the left until it first reaches $\mathbb{R}\times\{0\}$.

\begin{defn}
The Conley Zehnder index $\mu^{\eta}_{\cz}(\tau) \in \mathbb{Z}$ is the Maslov index of the closed path $L_{\Psi_\tau}$ of Lagrangian subspaces in $(\mathbb{R}^2,\omega_0)$. The $\mathbb{Z}_2$-degree $|\tau|$ of the Reeb chord $\tau$ is defined as the parity of  $\mu^\eta_{\cz}(\tau)$, which does not depend on $\eta$. We will say that a Reeb chord is even if $|\tau|$ is even and odd if $|\tau|$ is odd.
\end{defn}

If $\gamma=(\gamma,T) \in \P(\lambda)$ and $\Psi$ is a $d\lambda$-symplectic trivialization of $\gamma(T\cdot)^*\xi \to \R/\Z$ then we can identify the linearized flow $\{(D\phi_\lambda^t)_{\gamma(0)}\mid t\in[0,T]\}$ with a path of symplectic $2\times 2$ matrices $M(t)$.

\begin{defn}
The Conley-Zehnder index of $\gamma$ with respect to $\Psi$ is defined to be the Conley-Zehnder index of the path $t\in[0,T] \mapsto M(t)$. It will be denoted by $\mu_{\cz}^\Psi(\gamma) \in \Z$.
\end{defn}

\begin{rem}
We do not restrict to non-degenerate periodic Reeb orbits to define the Conley-Zehnder index. If $\gamma$ is degenerate then we take the unique lower semi-continuous extension of the Conley-Zehnder index to degenerate symplectic paths.
\end{rem}

\subsubsection*{The special case of unit tangent bundles}

We specialize our discussion to the case where the contact $3$-manifold is the unit tangent bundle $(T^1S,\xi_{\mathrm{geo}})$ of a closed oriented surface $S$ endowed with the geodesic contact structure $\xi_{\mathrm{geo}}$, and $\Lambda_0$ and $\Lambda_1$ are the unit tangent fibers over distinct points in $S$. Note that $\Lambda_0,\Lambda_1$ are naturally oriented by $S$.

We choose a nowhere vanishing vector field in $(T^1S,\xi_{\mathrm{geo}})$ that is tangent to the unit tangent fibers. We use this vector field and the orientation of $S$ to generate a global trivialization of $\xi$ that we denote by $\Psi$. We perform the same construction as before, using the trivialization $\Psi$, to obtain for each Reeb chord $\tau$ the path of Lagrangian subspaces $L_{\tau}$ in $(\mathbb{R}^2,\omega_0)$.

\begin{defn}
In this setting, the integral Conley-Zehnder index $\widetilde{\mu}_{\cz}(\tau) \in \mathbb{Z}$ of a Reeb chord $\tau$ is the Maslov index of $L_{\tau}$. We define the $\mathbb{Z}$-grading $|\tau|_{\mathbb{Z}}$ of $\tau$ to be $\widetilde{\mu}_{\cz}(\tau)$.
\end{defn}

\begin{defn}
In this setting, the Conley-Zehnder index $\widetilde{\mu}_{\cz}(\gamma) \in \mathbb{Z}$ of a Reeb orbit $\gamma \in \P(\lambda)$ is the Conley-Zehnder index $\mu_{\cz}^\Psi(\gamma)$ of the pair $\gamma,\Psi$.
\end{defn}

\subsubsection{The Fredholm index}

Consider $\lambda_\pm$ two defining contact forms on $(Y,\xi)$ satisfying $\lambda_+ = f\lambda_-$ for some smooth $f>1$ on $Y$. We consider an exact symplectic cobordism $(\R\times Y,\varpi)$ as in Section~\ref{sec_exact}. We also fix $J_\pm \in \J(\lambda_\pm)$ and $\bar J \in \J(J_+,J_-)$.

Fix $\tau_\pm \in \T(\lambda_\pm,\Lambda_0\to\Lambda_1)$ Reeb chords, and let $\widetilde{u}$ be a strip representing an element of $\M(\bar J,\tau_+,\tau_-)$. Assume that $\tau_\pm$ are transverse Reeb chords. In a certain functional analytic set-up the non-linear Cauchy-Riemann operator $\bar\partial_{\bar J}$ can be linearized at $\util$ giving rise to a linear Fredholm operator whose index we denote by $I(\widetilde{u})$; see~\cite{wendl_transv} for more details. Arguments from~\cite{Ab1,Ekholm} show that 
$$
I(\widetilde{u}) = \mu_{CZ}^\eta(\tau^+) - \mu_{CZ}^\eta(\tau^-),
$$
where $\eta$ is a non-vanishing section of $\util^*\xi$ that is tangent to $\Lambda_0$ over $ \R \times \{0\} $ matching the orientation we chose for $\Lambda_0$, and is tangent to  $\Lambda_1$ over $ \R \times \{1\} $ matching the orientation that we chose for $\Lambda_1$. It follows that 
$$
I(\widetilde{u}) \equiv |\tau_+| - |\tau_-| \mod 2
$$

Fix $\gamma_\pm \in \P(\lambda_\pm)$ periodic Reeb orbits, and let $\widetilde{u}=(a,u)$ be a cylinder representing an element of $\M(\bar J,\gamma_+,\gamma_-)$. Assume that $\gamma_+,\gamma_-$ are non-degenerate. In a certain functional analytic set-up the non-linear Cauchy-Riemann operator $\bar\partial_{\bar J}$ can be linearized at $\util$ giving rise to a linear Fredholm operator whose index we denote by $I(\widetilde{u})$. Standard arguments show that
$$
I(\widetilde{u}) = \mu_{\cz}^\tau(\gamma_+) - \mu_{\cz}^\tau(\gamma_-)
$$
where $\tau$ is any trivialization of $u^*\xi$ that extends to trivializations of $\xi$ over the asymptotic limits $\gamma_\pm$.

\subsubsection*{The special case of unit tangent bundles}

In the special case where the contact manifold is the unit tangent bundle $(T_1S,\xi_{\mathrm{geo}})$ of a closed oriented surface $S$ endowed with the geodesic contact structure $\xi_{\mathrm{geo}}$, and $\Lambda_0$ and $\Lambda_1$ are the unit tangent fibers over distinct points, we have that $$ I(\widetilde{u}) = |\tau_+|_{\mathbb{Z}} - |\tau_-|_{\mathbb{Z}} $$ for a finite-energy strip $\widetilde{u}$ between chords $\tau_\pm$. Similarly, $$ I(\widetilde{u}) = \widetilde\mu_{\cz}(\gamma_+) - \widetilde\mu_{\cz}(\gamma_-) $$ for a finite-energy cylinder $\widetilde{u}$ between closed Reeb orbits $\gamma_\pm$.

\begin{rem}
The above discussion and formulas apply to the case of finite-energy cylinders or strips on a symplectization $(\R\times Y,d(e^s\lambda))$.
\end{rem}

\section{Strip Legendrian contact homology on the complement of a finite set of periodic Reeb orbits}\label{section3}

In this section we show that under certain conditions one can construct a version of Legendrian contact homology on the complement of a finite set of Reeb orbits. We call this homology theory, which first appeared in the first author's PhD thesis \cite{thesis}, the strip Legendrian contact homology on the complement of a set of Reeb orbits.

The goal behind the construction of this theory is to study forcing of Reeb chords. The idea of using SFT invariants to study forcing of Reeb orbits comes from the work of Momin \cite{Momin} and was further developed in \cite{HMS}. It is briefly described as follows: on a contact $3$-manifold $(Y,\xi)$ we consider a transverse link, which we denote by $L$, and we study dynamical properties of contact forms on $(Y,\xi)$ which have the link $L$ as a set of Reeb orbits. We address the question whether $L$ forces infinitely many periodic orbits or even positive topological entropy.

The works \cite{HMS,Momin} have answered this question positively proving that certain transverse links force the existence of infinitely many Reeb orbits. In these works, the cylindrical contact homology on the complement of $L$ is used to study how the condition of having $L$ as a set of Reeb orbits forces the existence of other Reeb orbits. We follow a similar approach focusing on the implications of certain transverse links to the existence of Reeb chords and consequently to the complexity of the dynamics.

We start presenting the setup on which we work: let $L$ be a transverse link in the contact $3$-manifold $(Y,\xi)$ and $\lambda_0$ be a contact form on $(Y,\xi)$ for which the link $L$ consists of Reeb orbits of $X_{\lambda_0}$. Let $\Lambda$ and  $\widehat \Lambda$ be a pair of disjoint compact Legendrian submanifolds which do not intersect $L$. Denote by $\pi_1(Y\setminus L, \Lambda)$  the set of homotopy classes of curves in $Y \setminus L$  which start and end at $\Lambda$.

Denote by $\pi_1(Y\setminus L, \Lambda \to \widehat \Lambda)$  the set of homotopy classes of curves in $Y \setminus L$  which start at $\Lambda$ and  end at $\widehat \Lambda$. Fixing $\rho \in \pi_1(Y \setminus L,\Lambda \to \widehat \Lambda)$ and $C>0$ we denote by $$\mathcal{T}^{\rho,C}(\lambda_0)\subset \mathcal{T}(\lambda_0,\Lambda \to \widehat \Lambda)$$ the set of Reeb chords of $X_{\lambda_0}$  from $\Lambda$ to $\widehat \Lambda$ in the homotopy class $\rho$ and which have action $\leq C$.

We shall assume the following hypotheses on $\lambda_0$:
\begin{itemize}
\item[(a)] every Reeb orbit in $Y\setminus L$  is non-contractible in $Y\setminus L$,
\item[(b)] every Reeb orbit $\gamma_{L}$ in $L$ is either non-contractible or for any disc $D_{\gamma_L}\subset Y$ with $\partial D_{\gamma_L}=\gamma_L$, the interior of $D_{\gamma_L}$ intersects $L$,
\item[(c)] every Reeb chord from $\Lambda$ to itself does not vanish in $\pi_1(Y \setminus L,\Lambda)$,
\item[(d)] every Reeb chord from $\widehat \Lambda$ to itself  does not vanish in $\pi_1(Y \setminus L,\widehat \Lambda)$.
\end{itemize}
If it satisfies conditions (a)-(d) then $\lambda_0$ is said to be adapted to $(Y\setminus L, \Lambda \to \widehat \Lambda)$.

In order to have a well-defined Legendrian contact homology on $Y\setminus L$, filtered by $\rho$ and $C$, we also assume that
\begin{itemize}
\item[(e)] every Reeb chord  in $\mathcal{T}^{\rho,C}(\lambda_0)$ is transverse and is embedded as a map from $[0,1]$ to $Y\setminus L$.
\end{itemize}

In view of the transversality condition (e) the set $\mathcal{T}^{\rho,C}(\lambda_0)$ is finite. Hence we define $${\rm LCC}^{\rho,C}(\lambda_0)=\bigoplus_{\tau \in \mathcal{T}^{\rho,S}(\lambda_0)} \Z_2 [\tau]$$ as the finite dimensional $\mathbb{Z}_2$-vector space freely generated by the elements in $\mathcal{T}^{\rho,C}(\lambda_0)$.

It is enough for our purposes to consider a $\mathbb{Z}_2$-grading on ${\rm LCC}^{\rho,C}(\lambda_0)$ which is defined on the generators using the parity of the Maslov index defined in Section \ref{section:index} and extending it linearly. This induces the splitting $${\rm LCC}^{\rho,C}(\lambda_0)={\rm LCC}^{\rho,C}_{\rm even}(\lambda_0) \oplus {\rm LCC}^{\rho,C}_{\rm odd}(\lambda_0).$$ In case there exists a well-defined $\Z$-grading for the Reeb chords from $\Lambda$ to $\widehat \Lambda$, we consider the splitting $${\rm LCC}^{\rho,C}(\lambda_0)=\bigoplus_{k \in \Z}{\rm LCC}^{\rho,C}_k(\lambda_0).$$

In order to define a differential on ${\rm LCC}^{\rho,C}(\lambda_0)$ we have to consider a special class of moduli spaces: for $J\in  \mathcal{J}(\lambda_0)$, we first remark that the set $\mathbb{R}\times  L$ is the union of a finite number of $J$-holomorphic cylinders in the symplectization $(\mathbb{R}\times Y, J)$.

For $\tau,\tau' \in \mathcal{T}^{\rho,C}(\lambda_0)$, we define  $$\mathcal{M}_k^{\rho,C} (J,\tau,\tau')\subset \mathcal{M}_k(J,\tau,\tau')$$ as the subset of $\mathcal{M}_k(J,\tau,\tau')$ consisted of  $J$-holomorphic strips whose image in $\R \times Y$ do not intersect $\R \times  L$. Analogously, we define $\widetilde{\mathcal{M}}_k^{\rho,C} (J,\tau,\tau')$ as the quotient of $\mathcal{M}_k^{\rho,C} (J,\tau,\tau')$ by the $\R$-action that comes from the fact that $J$ is $\R$-invariant. %We let $\overline{\mathcal{M}}_k^{\rho,S} (\tau,\tau',J,\Lambda,\widehat \Lambda)$ be the SFT-compactification of  $\widetilde{\mathcal{M}}_k^{\rho,S} (\tau,\tau',J,\Lambda,\widehat \Lambda)$.

It follows from condition (e) above we can apply the asymptotic analysis of \cite{Ab} to conclude that for any pair $\tau,\tau' \in \mathcal{T}^{\rho,C}(\lambda_0)$ the moduli space $\mathcal{M}_k(J,\tau,\tau')$ contains only somewhere injective pseudoholomorphic strips. In \cite[Proposition 3.15]{Rizzel} Dimitroglou Rizzel combined the techniques of \cite{Ab1}, \cite{Dr} and \cite{Lazzarini} to show that  for a generic set $\mathcal{J}^\rho_{\rm reg}(\lambda_0) \subset \mathcal{J}(\lambda_0)$, we have that for  $J\in \mathcal{J}^\rho_{\rm reg}(\lambda_0)$ every element in the moduli space $\mathcal{M}^{\rho,C}_k(J,\tau,\tau')$ is transverse in the sense of Fredholm theory, i.e., the linearization of the Cauchy-Riemann operator $\bar \partial_J$ along any element of $\mathcal{M}^{\rho,C}_k(J,\tau,\tau')$ is surjective, this being valid for all $\tau,\tau' \in \mathcal{T}^{\rho,C}(\lambda_0)$. It then follows that if $J\in  \mathcal{J}^\rho_{\rm reg}(\lambda_0)$ then  $$\widetilde{\mathcal{M}}_k^{\rho,C} (J,\tau,\tau') \mbox{ is a } (k-1)-\mbox{dimensional manifold},$$ for any $\tau,\tau'$ and $k\geq 1$. This follows from the fact that \begin{equation}\label{interL}{\rm closure}({\rm image}(w)) \cap L = \emptyset,\end{equation} for any $\wtil=(a,w)\in \mathcal{M}_k^{\rho,C} (J,\tau,\tau')$ and hence \eqref{interL} holds true also for all elements in a small neighbourhood of $\wtil$ in $\mathcal{M}_k (J,\tau,\tau')$.

We are ready to define a differential on ${\rm LCC}^{\rho,C}(\lambda_0)$.
\begin{defn}
Let $\tau \in \mathcal{T}^{\rho,C}(\lambda_0)$ and $J \in \mathcal{J}^\rho_{\rm reg}(\lambda_0) \subset \mathcal{J}(\lambda_0)$. We define
\begin{equation}
d^{\rho}_J (\tau)= \sum_{\tau' \in \mathcal{T}^{\rho,C}(\lambda_0)}  n(\tau,\tau')_{\Z_2}  \tau'
\end{equation}
where $n(\tau,\tau')$ is the (finite) number of elements in $\widetilde{\mathcal{M}}_1^{\rho,C} (J,\tau,\tau')$ of $J$-holomorphic strips of Fredholm index $1$ modulo the $\mathbb{R}$-action.
\end{defn}

We must now prove that the differential $d^\rho_J$ is well-defined.
\begin{lem}\label{lemmadifferential}
Assume that $\lambda_0$ is adapted to the pair $(Y\setminus L, \Lambda \to \widehat \Lambda)$. Fix $\rho \in \pi_1(Y\setminus L, \Lambda \to \widehat \Lambda)$, $C>0$ and assume that condition (e) holds. Then for any $J \in \mathcal{J}_{\rm reg}^\rho(\lambda_0) \subset \mathcal{J}(\lambda_0)$ the following assertions hold:
\begin{enumerate}
\item For each $\tau \in \mathcal{T}^{\rho,C}_{\Lambda \to \widehat \Lambda}(\lambda_0)$, $d^\rho_J(\tau)$ is a finite sum and we can extend $d^\rho_J$ to a well defined linear map from ${\rm LCC}^{\rho,C}(\lambda_0)$ to itself.
\item The differential $d^{\rho}_J$ decreases the action of Reeb chords.
\item We have $$\begin{aligned} d ^\rho_J({\rm LCC}^{\rho,C}_{\rm even}(\lambda_0)) \subset  {\rm LCC}^{\rho,C}_{\rm odd}(\lambda_0),\\ d^\rho_J({\rm LCC}^{\rho,C}_{\rm odd}(\lambda_0)) \subset  {\rm LCC}^{\rho,C}_{\rm even}(\lambda_0).\end{aligned}$$ In case the Reeb chords in class $\rho$ have a well defined integer Conley-Zehnder index, we have $$d_J^\rho({\rm LCC}^{\rho,C}_{k}(\lambda_0)) \subset  {\rm LCC}^{\rho,C}_{k-1}(\lambda_0).$$
\end{enumerate}
\end{lem}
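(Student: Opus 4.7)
The plan is to dispose of parts (2) and (3) by direct computation, and then devote the main effort to part (1), which is essentially a compactness statement for the index-$1$ moduli spaces.

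For (2), any $\widetilde{u}=(a,u) \in \mathcal{M}_1^{\rho,C}(J,\tau,\tau')$ is a finite-energy $J$-holomorphic strip in the exact symplectization $(\R\times Y, d(e^s\lambda_0))$ with boundary on the exact Lagrangians $\R\times\Lambda$ and $\R\times\widehat\Lambda$. A standard consequence of the Cauchy--Riemann equation is that $u^*d\lambda_0$ is a non-negative $2$-form on $\R\times[0,1]$ which vanishes identically only when $\pi\circ du\equiv 0$, i.e.\ on trivial strips. Combining this with Stokes' theorem and the vanishing of $\lambda_0$ on the Legendrians yields
$$A(\tau)-A(\tau')=\int_{\R\times[0,1]} u^*d\lambda_0 \geq 0,$$
with equality if and only if $\widetilde u$ is a trivial strip over $\tau=\tau'$. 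Hence $d^\rho_J$ strictly decreases the action. For (3), the Fredholm index formula recalled in Section~\ref{section:index} gives $\mu^\eta_{\cz}(\tau)-\mu^\eta_{\cz}(\tau')=I(\widetilde u)=1$ for any $\widetilde u$ counted by $n(\tau,\tau')$; in particular $|\tau|\not\equiv|\tau'|\pmod 2$, which proves the parity statement, and the same identity immediately yields the shift by $1$ whenever a well-defined $\mathbb{Z}$-grading is available.

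For (1), the set $\mathcal{T}^{\rho,C}(\lambda_0)$ is finite by condition (e) and the action bound $C$, so it suffices to show that each coefficient $n(\tau,\tau')$ is a finite integer. Since $J\in\mathcal{J}^{\rho}_{\mathrm{reg}}(\lambda_0)$, the moduli space $\widetilde{\mathcal{M}}_1^{\rho,C}(J,\tau,\tau')$ is already a $0$-dimensional manifold, so it remains to establish its compactness. I would apply the SFT compactness theorem to a sequence $\widetilde u_n \in \widetilde{\mathcal{M}}_1^{\rho,C}(J,\tau,\tau')$ to extract a subsequence converging to a holomorphic building, and then rule out each possible degeneration. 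Sphere bubbles and disk bubbles with boundary on $\R\times\Lambda$ or $\R\times\widehat\Lambda$ are excluded by exactness: the symplectic form has a global primitive $e^s\lambda_0$ whose restriction to either Lagrangian vanishes identically, so any such bubble would carry zero energy and must be constant. Strip breakings with more than one non-trivial level are excluded by index additivity: the genericity of $J$ forces each non-trivial level to have Fredholm index $\geq 1$, while the total index equals $1$.

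It remains to show that the single limit strip $\widetilde u_\infty$ actually lies in the restricted moduli space $\widetilde{\mathcal{M}}_1^{\rho,C}$, i.e.\ has image disjoint from $\R\times L$ and represents the class $\rho$. Since each component of $\R\times L$ is itself a $J$-holomorphic cylinder, and each $\widetilde u_n$ has zero algebraic intersection with $\R\times L$, positivity of intersections in dimension four forces $\widetilde u_\infty$ either to miss $\R\times L$ or to contain one of these cylinders as a component. The adaptedness hypotheses (a)--(d) rule out the latter: a ghost cylinder over an orbit in $L$ attached to the main strip would introduce an asymptote to an orbit of $L$ violating (a)--(b), or would alter the boundary chords of the surviving strip in a way incompatible with the fixed class $\rho\in \pi_1(Y\setminus L,\Lambda\to\widehat\Lambda)$. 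Preservation of $\rho$ in the non-degenerate case then follows from the $C^\infty_{\mathrm{loc}}$ convergence of $\widetilde u_n$ to $\widetilde u_\infty$ away from $\R\times L$. The \emph{main obstacle} I foresee is precisely this last bookkeeping: the careful case analysis that uses (a)--(d) and the topology of $Y\setminus L$ to exclude all degenerations producing ghost cylinders over orbits of $L$. Once this is settled, the resulting finiteness of $n(\tau,\tau')$ follows from standard SFT machinery.
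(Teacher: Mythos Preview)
Your treatment of (2) and (3) is fine and matches the paper. The gap is in part (1), in your handling of bubbling.

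You write that sphere and disk bubbles are excluded by exactness, because the primitive $e^s\lambda_0$ vanishes on $\R\times\Lambda$ and $\R\times\widehat\Lambda$. That argument is correct for \emph{closed} spheres and \emph{compact} disks, but those are not the objects that arise from bubbling in a symplectization. In SFT compactness, an interior bubbling-off point produces a finite-energy \emph{plane} asymptotic to a Reeb orbit, and a boundary bubbling-off point produces a finite-energy \emph{half-plane} asymptotic to a Reeb chord. These are punctured curves, and Stokes' theorem gives them energy equal to the action of the asymptotic orbit or chord, which is strictly positive. Exactness does nothing to exclude them.

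This is exactly where hypotheses (a)--(d) enter, and the paper uses them at this stage, not later. A plane arising in the limit cannot intersect $\R\times L$ (by positivity and stability of intersections, since the $\widetilde u_n$ miss $\R\times L$), so it projects to a disk in $Y\setminus L$ bounding its asymptotic orbit $\gamma$. If $\gamma\subset Y\setminus L$ this contradicts (a); if $\gamma\subset L$ this contradicts (b). Similarly a half-plane gives a Reeb chord from $\Lambda$ to itself or from $\widehat\Lambda$ to itself that is trivial in $\pi_1(Y\setminus L,\Lambda)$ or $\pi_1(Y\setminus L,\widehat\Lambda)$, contradicting (c) or (d). Only after these punctured bubbles are excluded do you know that every level of the limit building is genuinely a strip, and only then does your index-additivity argument force $q=1$.

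Your final paragraph misplaces the role of (a)--(d): you invoke them only to prevent the single limit strip from acquiring a ``ghost cylinder over an orbit in $L$'' as a component, but a strip is irreducible and cannot contain a cylinder component unless nodal degeneration has already occurred --- which is precisely the bubbling you claimed to have excluded. Once planes and half-planes are ruled out as above, the argument that the limit strip misses $\R\times L$ is the straightforward positivity-of-intersections step you describe, and no further use of (a)--(d) is needed there.
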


\proof We first prove that $d^\rho_J$ is well-defined. For that, we show that  $\widetilde{\mathcal{M}}(J,\tau,\tau')$ is finite for every $\tau$ and $\tau'$ in  $\mathcal{T}^{\rho,C},(\lambda_0)$. Since $J \in \mathcal{J}_{\rm reg}^\rho(\lambda_0)$, $\widetilde{\mathcal{M}}_1^{\rho,C} (J,\tau,\tau')$ is a $0$-dimensional manifold. We show that it is compact which implies that it has to be a finite set.

To obtain the compactness of  $\widetilde{\mathcal{M}}_1^{\rho,C}(J,\tau,\tau')$, we apply the standard bubbling-off analysis for pseudoholomorphic curves of \cite{H} and the SFT compactness results of \cite{CPT}.

Let $\tilde w_n$ be a sequence of elements in $\widetilde{\mathcal{M}}_1^{\rho,C}(J,\tau,\tau')$. By assumption the energy of $\tilde w_n$ is uniformly bounded. We first argue that there are no bubbling-off points.

Suppose that there exists an interior bubbling-off point for the sequence $\tilde w_n$. Then a subsequence of $\tilde w_n$ converges  to a pseudoholomorphic building $\tilde w$ which has a pseudoholomorphic plane $\tilde w_{\rm pl}$ as one of its components. If $\tilde w_{\rm pl}$ intersected $\R \times L$ then these intersections would be isolated and by positivity and stability of intersections, it would follow that  $\tilde w_n$ also intersects $\R \times L$, which is a contradiction. Hence $\tilde w_{\rm pl}$ does not intersect $\R \times L$. Let $\gamma$ be an asymptotic limit of $\tilde w_{\rm pl}$. There are two possibilities:
\begin{itemize}
    \item if $\gamma$ is in $L$ then hypothesis (b) implies that $\tilde w_{\rm pl}$ intersects $\R \times L$, which is impossible;
    \item if $\gamma$ is in $Y \setminus L$, then hypothesis (a) implies again implies that $\tilde w_{\rm pl}$ intersects $\R \times L$, which is also impossible.
\end{itemize}
 We thus conclude that $\tilde w_n$ has no interior bubbling-off point.

To see that the sequence $\tilde w_n$ has no boundary bubbling-off point we proceed similarly. Assuming that such a bubbling-off point exists we know that there exists a subsequence of $\tilde w_n$ converging to a pseudoholomorphic building $\tilde w$ such that at least one component of $\tilde w$ is a pseudoholomorphic half-plane $\tilde w_{\rm hp}$. In this case $\tilde w_{\rm hp}$ cannot intersect $\R \times L$ since any such intersection would be isolated and by positivity and stability of intersections $\tilde w_n$ would also intersect $\R \times L$, a contradiction. The boundary of $\tilde w_{\rm hp}$ is entirely contained in either $\mathbb{R}\times  \Lambda$ or $\mathbb{R}\times  \widehat \Lambda$ and its only positive puncture is asymptotic to a Reeb chord $\tau_{\rm hp}$ going either from $\Lambda$ to itself or from  $ \widehat \Lambda$ to itself. Since $\tilde w_{\rm hp}$ does not intersect $\R \times L$, we conclude that $\tau_{\rm hp}$ is trivial in $\pi_1(Y \setminus L,\Lambda)$ or in $\pi_1(Y \setminus L,\widehat{\Lambda})$, something that contradicts hypotheses (b) or (c), respectively. We thus conclude that $\tilde w_n$ has no boundary bubbling-off point.

Since there are no bubbling-off points for the sequence $\tilde w_n$, the SFT-compactness theorem tells us that there is a subsequence of $\tilde w_n$ which converges in the SFT sense to a pseudoholomorphic building $\tilde w$ with $q$-levels $\tilde w^l$ (where $l=1\ldots q$) all of which are pseudoholomorphic strips. Each $\tilde w^l: \D \setminus \{-1,1\} \to  \mathbb{R}\times Y$ satisfies:
\begin{itemize}
\item $1$ is a positive boundary puncture and $\tilde w^l$ is asymptotic to $\tau_l \in \mathcal{T}^{\rho,C}(\lambda_0)$ at $1$;
\item $-1$ is a negative boundary puncture and $\tilde w^l$ is asymptotic to $\tau_{l+1} \in \mathcal{T}^{\rho,C}(\lambda_0)$ at $-1$;
\item $\tilde w^l(H_-) \subset \mathbb{R} \times \Lambda$;
\item $\tilde w^l(H_+) \subset \mathbb{R} \times \widehat \Lambda$,
\end{itemize}
where $\tau_1 = \tau$, $\tau_{q+1} = \tau'$ and $\tau_l \neq \tau_{l+1} $.

By the regularity of $J$ and the fact that every $\tilde w^l$ is a somewhere injective pseudoholomorphic strip different from a trivial strip over a Reeb chord, we know that the Fredholm index $I_F(\tilde w^l)$ is at least $1$. Since the Fredholm index of the building $\tilde w$ is the sum of the Fredholm indices of its levels we have $I_F (\tilde w)= \sum_{l=1}^{q} I_F(\tilde w^l) \geq q.$ On the other hand, since $\tilde w$ is the SFT-limit of a sequence of pseudoholomorphic strips with Fredholm index 1, we must have $I_F(\tilde w)=1$. As a consequence we have $q=1$ and $\tilde w\in \widetilde{\mathcal{M}}_1(J,\tau,\tau')$. To see that $\tilde w$ is actually an element of $\widetilde{\mathcal{M}}_1^{\rho,C} (J,\tau,\tau')$ we argue indirectly. If this is not the case then there is an isolated intersection of $\tilde w$ with $\mathbb{R}\times  L$. Again the positivity and stability of intersections imply that $\tilde w_n$ intersect $\R \times L$, a contradiction.

We have shown that every sequence of elements in $\widetilde{\mathcal{M}}^{\rho,C}_1 (J,\tau,\tau')$ has a subsequence that converges to an element in  $\widetilde{\mathcal{M}}^{\rho,C}_1 (J,\tau,\tau')$. This implies the desired compactness of  $\widetilde{\mathcal{M}}^{\rho,C}_1 (J,\tau,\tau')$. Since $\widetilde{\mathcal{M}}^{\rho,C}_1 (J,\tau,\tau')$ is a $0$-dimensional manifold, it follows that it is a finite set and hence $n(\tau,\tau')$ is a finite number for every pair of Reeb chords $\tau$ and $\tau'$ in $\mathcal{T}^{\rho,C}(\lambda_0)$. Condition (e) implies that, for fixed $\tau$, the number $n(\tau,\tau')$ is non-zero only for a finite number of Reeb chords $\tau'$. This proves (1).

Notice that $n(\tau,\tau')$ can only be non-zero for Reeb chords $\tau, \tau'$ with $A(\tau') < A(\tau)\leq C$. Indeed, if $A(\tau')\geq A(\tau)$, then a pseudoholomorphic strip $\tilde w$ which is positively asymptotic to $\tau$ at $1$ and negatively asymptotic to $\tau'$ at $-1$ exist only if $\tau=\tau' \Rightarrow A(\tau)=A(\tau')$ and $\tilde w$ is a cylinder over $\tau$ . In this case, the Fredholm index of $\tilde w$ is $0$ and hence $\mathcal{M}_1^{\rho,C}(J,\tau,\tau')=\emptyset.$ This proves (2).

Item (3) follows easily from the fact that the Fredholm index of a pseudoholomorphic strip with one positive puncture asymptotic to a Reeb chord $\tau \in \mathcal{T}^{\rho,C}(\lambda_0)$ and one negative puncture asymptotic to a Reeb chord $\tau' \in \mathcal{T}^{\rho,C}(\lambda_0)$ has the same parity as the sum of the parities of $\tau$ and $\tau'$. Thus $\widetilde{\mathcal{M}}^{\rho,C}_1 (J,\tau,\tau')$ can be non-empty only if $\tau$ and $\tau'$ have different parity, from which (3) follows immediately. \qed

Having established these properties of  $d^{\rho}_J$ we proceed to show that $d^{\rho}_J\circ d^{\rho}_J = 0$. Again, this is only true since $\lambda_0, \rho$ and $C$ satisfy conditions (a)-(e) above.
\begin{lem} Assume that $\lambda_0$ is adapted to the pair $(Y \setminus L, \Lambda \to \widehat \Lambda)$ and condition (e) holds for $\rho,C$. If $J \in \mathcal{J}_{\rm reg}^\rho(\lambda_0) \subset \mathcal{J}(\lambda_0)$, then
\begin{equation}
d^{\rho}_J \circ d^{\rho}_J = 0.
\end{equation}
\end{lem}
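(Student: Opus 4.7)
The plan is to establish $d^\rho_J \circ d^\rho_J = 0$ via the standard boundary-of-moduli-space argument: for each pair $\tau,\tau'' \in \mathcal{T}^{\rho,C}(\lambda_0)$, I will show that the coefficient of $\tau''$ in $d^\rho_J \circ d^\rho_J(\tau)$ equals the mod-$2$ count of boundary points of a compact $1$-manifold, and hence vanishes.

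Concretely, I would consider the moduli space $\widetilde{\mathcal{M}}_2^{\rho,C}(J,\tau,\tau'')$ of Fredholm index-$2$ pseudoholomorphic strips from $\tau$ to $\tau''$ whose image avoids $\R\times L$, modulo the $\R$-action. Since $J\in\mathcal{J}^\rho_{\reg}(\lambda_0)$, this is a smooth $1$-dimensional manifold, and I then need to compactify it. The bubbling analysis proceeds exactly as in the proof of Lemma~\ref{lemmadifferential}: conditions (a) and (b) rule out interior bubbling (any pseudoholomorphic plane arising as a bubble would be asymptotic either to a Reeb orbit in $L$ or to a Reeb orbit in $Y\setminus L$, and in either case would have to intersect $\R\times L$, contradicting positivity and stability of intersections with the nearby strips $\wtil_n$); and conditions (c) and (d) rule out boundary bubbling (any half-plane bubble would be asymptotic to a Reeb chord from $\Lambda$ to itself or from $\widehat\Lambda$ to itself which is null-homotopic in $Y\setminus L$ relative to the appropriate Legendrian). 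The SFT compactness theorem of \cite{CPT} then guarantees that any sequence in $\widetilde{\mathcal{M}}_2^{\rho,C}(J,\tau,\tau'')$ subconverges to a building consisting solely of pseudoholomorphic strips joined end-to-end at intermediate Reeb chords, which must again lie in $\mathcal{T}^{\rho,C}(\lambda_0)$ by homotopy class preservation and the action-decreasing property already established in Lemma~\ref{lemmadifferential}(2).

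By Fredholm index additivity, together with the fact that every non-trivial somewhere-injective strip is regular of index $\geq 1$, a limiting building of total index $2$ is either a single index-$2$ strip in the interior of the moduli space, or a two-level broken strip consisting of an index-$1$ strip from $\tau$ to some intermediate chord $\tau'\in \mathcal{T}^{\rho,C}(\lambda_0)$ followed by an index-$1$ strip from $\tau'$ to $\tau''$. Standard gluing in the spirit of \cite{Ab1} associates to each such broken configuration a unique end of $\widetilde{\mathcal{M}}_2^{\rho,C}(J,\tau,\tau'')$, and conversely every end arises in this way. Hence the compactification $\overline{\widetilde{\mathcal{M}}}_2^{\rho,C}(J,\tau,\tau'')$ is a compact $1$-manifold whose boundary is canonically identified with
$$\bigsqcup_{\tau' \in \mathcal{T}^{\rho,C}(\lambda_0)} \widetilde{\mathcal{M}}_1^{\rho,C}(J,\tau,\tau') \times \widetilde{\mathcal{M}}_1^{\rho,C}(J,\tau',\tau'').$$
Since the $\Z_2$-count of boundary points of any compact $1$-manifold vanishes, we obtain $\sum_{\tau'} n(\tau,\tau')\,n(\tau',\tau'') \equiv 0 \pmod 2$, which is exactly the coefficient of $\tau''$ in $d^\rho_J \circ d^\rho_J(\tau)$.

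The main obstacle is the compactification step: one has to carefully redeploy conditions (a)–(e) to forbid both bubbling and the appearance of limiting components intersecting $\R\times L$. The bubbling is excluded by the topological hypotheses together with positivity of intersections, while persistence of the condition ${\rm image}(w)\cap (\R\times L)=\emptyset$ in the limit (and after gluing) again follows from positivity and stability of intersections, since any isolated intersection in a limit curve would propagate to nearby curves in the sequence. The gluing theorem itself is standard once regularity of the index-$1$ strata is granted by the choice $J\in \mathcal{J}^\rho_{\reg}(\lambda_0)$ and transversality of chords from condition (e).
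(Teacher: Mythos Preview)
Your proposal is correct and follows essentially the same route as the paper: compactify $\widetilde{\mathcal{M}}_2^{\rho,C}(J,\tau,\tau'')$ by ruling out bubbling via conditions (a)--(d) and positivity/stability of intersections, apply SFT compactness and Fredholm index additivity to see that the only possible degenerations are two-level broken strips of index-$1$ pieces, invoke gluing to identify the boundary, and conclude by the parity of the number of boundary points of a compact $1$-manifold. The paper's proof is organized around analyzing an interval component $I$ and its boundary points, but the content is identical to what you wrote.
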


\proof The regularity of $J$ implies that each connected component $I$ of $\widetilde{\mathcal{M}}^{\rho,C}_2 (J,\tau,\tau')$ is a 1-dimensional manifold. We assume $I$ is an open interval and let $\tilde w_n\in I$ be a sequence converging to one of its boundary points. Reasoning as in the proof of Lemma \ref{lemmadifferential} one obtains that no sequence of bubbling-off points occurs. Thus the SFT compactness theorem, together with the stability and positivity of intersections,  imply that some subsequence of $\tilde w_n$ converges to a pseudoholomorphic building $\tilde w$ with $q$-levels $\tilde w^l$, where $1 \leq l \leq q $, and each level consists of a somewhere injective finite energy strip $\tilde w^l\in \widetilde{\mathcal{M}}_{k_l}^{\rho,C}(J,\tau_{l},\tau_{l+1})$ for some  $k_l\geq 1$. The Reeb chords $\tau_l,  \in\mathcal{T}^{\rho,C}(\lambda_0)$ for $l=1\ldots q+1$,
%\begin{itemize}
%\item $1$ is a positive boundary puncture where $\tilde w^l$ is asymptotic to $\tau_l \in\mathcal{T}^{\rho,S}(\lambda_0)$.
%\item $-1$ is a negative boundary puncture where $\tilde w^l$ is asymptotic to $\tau_{l+1} \in \mathcal{T}^{\rho,S}(\lambda_0)$.
%\item %$\tilde w^l(H_-) \subset \mathbb{R} \times \Lambda$.
%\item $\tilde w^l(H_+) \subset \mathbb{R} \times \widehat \Lambda$.
%\end{itemize}
with $\tau_1 = \tau$ and $\tau_{q+1} = \tau'$.

Since $\tilde w$ is the SFT limit of a sequence of pseudoholomorphic strips of Fredholm index 2 we have $2=I_F(\widetilde{w})=\sum_{l=1}^q k_l \geq q$. If $q=1$ then $\tilde w \in \widetilde{\mathcal{M}}^{\rho,S}_2(J, \tau,\tau')$. However, this case is ruled out since we are assuming that the sequence $\tilde w_n$ is converging to one of the boundary points of $I$. Hence $q=2$ which forces $k_1=k_2=1$, $\tilde w^1 \in \widetilde{\mathcal{M}}_1^{\rho,C} (J,\tau,\tau_2)$ and $\tilde w^2 \in \widetilde{\mathcal{M}}^{\rho,C}_1 (J,\tau_2,\tau')$.

We conclude that we can associate to each boundary point of $I \subset \widetilde{\mathcal{M}}_2^{\rho,C} (J,\tau,\tau')$  a $2$-level pseudoholomorphic building, whose levels are elements  in the moduli spaces $\widetilde{\mathcal{M}}^{\rho,C}_1 (J,\tau,\tau'')$ and $\widetilde{\mathcal{M}}^{\rho,C}_1 (J,\tau'',\tau')$ for some $\tau''\in \mathcal{T}^{\rho,C}(\lambda_0)$.

On the other hand the gluing theorem gives the description of a neighbourhood of such a 2-level building in the SFT compactification of $\widetilde{\mathcal{M}}^{\rho,C} (J,\tau,\tau')$, the levels being elements in $\widetilde{\mathcal{M}}^{\rho,C}_1 (J,\tau,\tau'')$ and $\widetilde{\mathcal{M}}^{\rho,C}_1 (J,\tau'',\tau')$, with $\tau,\tau',\tau''\in \mathcal{T}^{\rho,C}(\lambda_0)$. This neighbourhood is diffeomorphic to the interval $[0, +\infty)$, taking $0$ to the 2-level building and all other points to elements in $\widetilde{\mathcal{M}}^{\rho,C}_2 (J,\tau,\tau')$.

Summing up, the compactification of $\widetilde{\mathcal{M}}^{\rho,C}_2 (J,\tau,\tau')$ has the structure of a $1$-dimensional manifold with boundary and consists of the disjoint union of finitely many circles and compact intervals.

Now observe that $d^{\rho}_J \circ d^{\rho}_J (\tau)$ has the form  $\sum_{\tau' \in \mathcal{T}^{\rho,S}(\lambda_0)} m(\tau,\tau')_{\Z_2}\tau'$, where $$m(\tau,\tau'):=\sum_{\tau''\in \mathcal{T}^{\rho,S}(\lambda_0)}n(\tau,\tau'')n(\tau'',\tau'),$$ and $n(\tau,\tau')$ is the number of elements in $\widetilde{\mathcal{M}}^{\rho,C}_1(J,\tau,\tau')$.

From the description of the compactification of $\widetilde{\mathcal{M}}_2^{\rho,C}(J,\tau,\tau')$ above, we conclude that the number $m(\tau,\tau')$ coincides with the number of boundary components of the compact intervals contained in the compactification of $\widetilde{\mathcal{M}}^{\rho,C}_2 (J,\tau,\tau')$, which is necessarily even. Hence $m(\tau,\tau')_{\Z_2}=0$ and the lemma follows.
\qed

We have thus obtained that under appropriate conditions the Legendrian contact homology on the complement of $L$ is well defined. We denote by
${\rm LCH}^{\rho,C}(J)= \frac{\ker d^\rho_J}{{\rm image} d^\rho_J}$ the homology associated to the differential chain complex $({\rm LCC}^{\rho,C}(\lambda_0),d^{\rho}_J)$. We have  the splitting $${\rm LCH}^{\rho,C}(J)= {\rm LCH}^{\rho,C}_{\rm even}(J) \oplus {\rm LCH}^{\rho,C}_{\rm odd}(J)$$ or, in case the Reeb chords admit a $\Z$-grading, $${\rm LCH}^{\rho,C}(J)=\bigoplus_{k\in \Z} {\rm LCH}^{\rho,C}_k(J).$$

Next we proceed to construct cobordism maps for the Legendrian contact homology.

\subsection{Chain maps} Here we consider the following situation. Let $(\mathbb{R} \times Y,d\kappa)$ be an exact symplectic cobordism from  $\lambda_0$ to $c\lambda_0$, where $0<c<1$ is a constant. Let $\mathcal{L}:=\R \times \Lambda$ and $\widehat{\mathcal{L}}:= \R \times \widehat{\Lambda}$. Then, $\mathcal{L}$ and $\widehat{\mathcal{L}}$  are exact Lagrangian cobordisms from $\Lambda$ to itself and from $\widehat \Lambda$ to itself, respectively. Assume that $\mathbb{R} \times L$ is a symplectic submanifold of $(\mathbb{R} \times Y,d\kappa)$. Following \cite{HMS,Momin} we consider the space $\mathcal{J}^L(J_+,J_-)$ of almost complex structures on $\mathbb{R} \times Y$ which are compatible with $d\kappa$,  coincide with $J_+\in \mathcal{J}(\lambda_0)$ on $[1,+\infty) \times Y$ and coincide with $J_-\in \mathcal{J}(c\lambda_0)$ on $(-\infty,-1]\times Y$. We also require that the set  $\R \times L$ is a union of pseudoholomorphic cylinders.

Given $\bar J \in \mathcal{J}^L(J_+,J_-)$, $\tau \in \mathcal{T}^{\rho,C}(\lambda_0)$ and $\tau' \in \mathcal{T}^{\rho,C}(c\lambda_0)$, we consider the moduli spaces $\mathcal{M}^{\rho,C}_k(\bar J, \tau,\tau')$ of pseudoholomorphic strips which do not intersect $\mathbb{R}\times L$, have boundary in $\mathcal{L} \cup \widehat{\mathcal{L}}$ and have Fredholm index $k$, as considered in section \ref{pseudohol}. Under condition (e) we know that for all $\tau,\tau'$ the elements in $\mathcal{M}^{\rho,C}_k(\bar J, \tau,\tau')$ are somewhere injective. It then follows from perturbation techniques used in \cite{HMS,Momin} combined with the techniques used in the proof of \cite[Proposition 3.15]{Rizzel} that there is a generic set $\mathcal{J}_{\rm reg}^{L,\rho}(J_+,J_-) \subset  \mathcal{J}^{L}(J_+,J_-)$ such that for all $\bar J \in \mathcal{J}_{\rm reg}^{L,\rho}(J_+,J_-)$ the moduli spaces $\mathcal{M}^{\rho,C}_k(\bar J,\tau,\tau')$ are Fredholm regular for every $\tau,\tau'$. Since such curves stay apart from $\R \times L$ the same is true for nearby curves and hence $$\mathcal{M}^{\rho,C}_k(\bar J,\tau,\tau') \mbox{ is a }k\mbox{-dimensional manifold,}$$ for any $\tau,\tau'$ and $k\geq 0$.

Taking $\bar J \in \mathcal{J}_{\rm reg}^{L,\rho}(J_+,J_-)$ we define the map
\begin{equation}
\Phi_{\bar J} : {\rm LCC}^{\rho,C}(\lambda_0) \to {\rm LCC}^{\rho,C}(c\lambda_0),
\end{equation}
which on generators $\tau \in \mathcal{T}^{\rho,C}(\lambda_0)$ is given by the (finite) sum
\begin{equation}\label{defchainmap}
\Phi_{\bar J}(\tau) = \sum_{\tau' \in \mathcal{T}^{\rho,C}(c\lambda_0)}  \#_{\Z_2} \mathcal{M}^{\rho,C}_0 (\bar J,\tau,\tau')\tau',
\end{equation} where $\#_{\Z_2}$ means the cardinality $\mbox{mod }2$.

To show that $\Phi_{\bar J}$ is well defined we need to prove that $\mathcal{M}^{\rho,C}_0 (\bar J,\tau,\tau')$ is a finite set for all $\tau,\tau'$  and that for a fixed $\tau \in \mathcal{T}^{\rho,C}(\lambda_0)$ there exist finitely many chords $\tau'\in \mathcal{T}^{\rho,C}(c\lambda_0)$ for which $\mathcal{M}^{\rho,C}_0(\bar J,\tau,\tau')$ is non-empty. The proof of these two facts is completely analogous to the proof  of Lemma \ref{lemmadifferential}-(1).

\begin{prop} \label{prophomiswelldefined} Assume that $\lambda_0$ is adapted to the pair $(Y\setminus L, \Lambda \to \widehat \Lambda)$, condition (e) is satisfied, $J_+\in \mathcal{J}^\rho_{\rm reg}(\lambda_0)$, $J_-\in \mathcal{J}^{\rho}_{\rm reg}(c\lambda_0)$ and $\bar J \in \mathcal{J}_{\rm reg}^{L,\rho}(J_+,J_-)$.
Then the map $\Phi_{\bar J}$ is well defined and satisfies \begin{equation}\label{chainmap} d^{\rho}_J \circ  \Phi_{\bar J}= \Phi_{\bar J} \circ d^{\rho}_J.\end{equation} In particular, $\Phi_{\bar J}$ descends to a map on the homology level \begin{equation}\label{defichainhomology}\bar \Phi_{\bar J} :  {\rm LCH}^{\rho,C}(J_+)  \to {\rm LCH}^{\rho,C}(J_-)
%:  [\tau]  \mapsto \left[\Phi_{\bar J} \right].
\end{equation}
\end{prop}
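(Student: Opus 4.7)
The plan is to prove the three claims of the proposition in order: well-definedness of $\Phi_{\bar J}$, the chain map identity, and the descent to homology. The heart of the argument is a standard SFT compactness and gluing analysis for the moduli spaces $\mathcal{M}^{\rho,C}_0(\bar J,\tau,\tau')$ and $\mathcal{M}^{\rho,C}_1(\bar J,\tau,\tau')$ in the cobordism, largely parallel to the proof of Lemma \ref{lemmadifferential}.

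For the well-definedness step I would first note that for fixed $\tau\in \mathcal{T}^{\rho,C}(\lambda_0)$, Stokes applied to the exact cobordism $(\R\times Y,d\kappa)$ together with non-negativity of the cobordism energy forces $A(\tau')\leq A(\tau)\leq C$ for any $\tau'$ contributing to $\Phi_{\bar J}(\tau)$; condition (e) then makes the set of such $\tau'$ finite. Finiteness of each individual $\mathcal{M}^{\rho,C}_0(\bar J,\tau,\tau')$ follows by the same SFT compactness argument used in Lemma \ref{lemmadifferential}-(1): conditions (a)--(d) combined with positivity and stability of intersections with $\R\times L$ rule out all interior and boundary bubbling, and the index budget of a total Fredholm index zero building excludes any non-trivial somewhere-injective symplectization level (whose index is at least one by regularity of $J_\pm$). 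The limit building therefore has a single cobordism level, and since $\mathcal{M}^{\rho,C}_0(\bar J,\tau,\tau')$ is $0$-dimensional this forces it to be finite.

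For the chain map identity I would analyse the compactification of the smooth $1$-manifold $\mathcal{M}^{\rho,C}_1(\bar J,\tau,\tau')$. The same bubbling analysis applies, and the total Fredholm index budget of $1$ admits only two types of two-level breaking: a top symplectization level in $\widetilde{\mathcal{M}}^{\rho,C}_1(J_+,\tau,\tau'')$ over a cobordism level in $\mathcal{M}^{\rho,C}_0(\bar J,\tau'',\tau')$, or a cobordism level in $\mathcal{M}^{\rho,C}_0(\bar J,\tau,\tau'')$ above a bottom symplectization level in $\widetilde{\mathcal{M}}^{\rho,C}_1(J_-,\tau'',\tau')$. The gluing theorem in turn realises each such broken configuration as exactly one endpoint of a compact interval in the compactification. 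Since a compact $1$-manifold with boundary has an even number of boundary points, counting these endpoints mod $2$ yields, coefficient by coefficient, the identity \eqref{chainmap} on generators, which extends $\Z_2$-linearly to all of ${\rm LCC}^{\rho,C}(\lambda_0)$. The descent to $\bar\Phi_{\bar J}$ on homology is then formal.

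I expect the main technical point to be, once again, the bubbling-off analysis. In the cobordism an interior bubble could be asymptotic either to a Reeb orbit in $L$ or to one in $Y\setminus L$, while a boundary bubble could be asymptotic to a chord from $\Lambda$ or from $\widehat \Lambda$ to itself. Conditions (a)--(d) are precisely designed to rule out each of these possibilities, and the only difference from Lemma \ref{lemmadifferential} is the loss of the $\R$-symmetry in the middle level, which plays no role in the topological obstructions. No new transversality issues arise, thanks to the assumption $\bar J\in \mathcal{J}_{\rm reg}^{L,\rho}(J_+,J_-)$ and the somewhere-injectivity of all curves in question, which follows from condition (e) and the fact that any curve staying away from $\R\times L$ has a neighbourhood of curves with the same property.
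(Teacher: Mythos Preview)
Your proposal is correct and follows essentially the same approach as the paper's own proof: SFT compactness with bubbling ruled out by conditions (a)--(d) and positivity/stability of intersections with $\R\times L$, an index-budget argument to force a single cobordism level (for well-definedness) or exactly one two-level breaking of the two types you describe (for the chain map identity), and gluing plus the parity of boundary points of a compact $1$-manifold to conclude. The only cosmetic difference is that you explicitly invoke Stokes for the action decrease, whereas the paper simply asserts it.
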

\proof First we show that $\Phi_{\bar J}$ is well defined. Fix $\tau \in \mathcal{T}^{\rho,C}(\lambda_0)$ and $\tau' \in \mathcal{T}^{\rho,C}(c\lambda_0)$. We claim that $\mathcal{M}^{\rho,C}_0(\bar J,\tau, \tau')$ is finite. Indeed, take a sequence $\tilde w_n \in \mathcal{M}^{\rho,C}_0(\bar J,\tau, \tau')$. Arguing as in the proof of Lemma \ref{lemmadifferential} we conclude that no sequence of bubbling-off points exists. Indeed, if this is not the case then up to extraction of a subsequence the SFT limit of $\tilde w_n$ contains a pseudoholomorphic plane or a pseudoholomorphic half-plane ($\bar J,$ $J_+$ or $J_-$-holomorphic) which does not intersect $\R \times L$ by positivity and stability of intersections. The plane is asymptotic to a contractible Reeb orbit and the half-plane is asymptotic to a contractible Reeb chord in $Y \setminus L$ either from $\Lambda$ to itself or from $\widehat \Lambda$ to itself. This contradicts conditions (a)-(d)  satisfied by $\lambda_0$ since it is adapted to $(Y\setminus L, \Lambda \to \widehat \Lambda)$. Hence no sequence of bubbling-off points exists. Since the Fredholm index of $\tilde w_n$ is $0$, the SFT compactness theorem and stability and positivity of intersections tell us that up to a subsequence $\tilde w_n$ converges to a building $\tilde w$ with finitely many strips filtered by $\rho$ and $C$, at most one of them being $\bar J$-holomorphic, each one of them having non-negative Fredholm index and so that the total Fredholm index is $0$. Since the curves in $\tilde w$ which are $J_+$ or $J_-$-holomorphic have Fredholm index at least $1$, we conclude that $\tilde w$ is a single curve in $\mathcal{M}^{\rho,C}_0(\bar J,\tau, \tau')$. Now since $\mathcal{M}^{\rho,C}_0(\bar J,\tau, \tau')$ is a $0$-dimensional manifold $\tilde w_n$ must be eventually constant which implies that $\mathcal{M}^{\rho,C}_0(\bar J,\tau, \tau')$ is finite. Now condition (e) and the fact that $\Phi_{\bar J}$ decreases action  imply that for fixed $\tau$, there exists only finitely many $\tau'$ for which $\mathcal{M}^{\rho,C}_0(\bar J,\tau, \tau')$ is non-empty. Hence $\Phi_{\bar J}$ is well defined.

To prove identity \eqref{chainmap} we describe the SFT compactification of $\mathcal{M}^{\rho,C}_1 (\bar J,\tau,\tau')$, where $\tau\in \mathcal{T}^{\rho,C}(\lambda_0)$ and $\tau'\in \mathcal{T}^{\rho,C}(c\lambda_0)$. Let $I\subset \mathcal{M}^{\rho,C}_1 (\bar J,\tau,\tau')$ be a connected component homeomorphic to an open interval and let $\tilde w_n \in I$ be a sequence converging to one of its boundary points. Arguing as before we conclude that no sequence of bubbling-off points exists. Hence, from stability and positivity of intersections, the SFT compactness theorem and condition (e), we know that a subsequence of $\tilde w_n$ converges to a pseudoholomorphic building $\tilde w$ with $q$-levels, each level consisting of a somewhere injective finite energy strip $\tilde w^l$ so that, for some $1\leq s_0\leq q$, we have
\begin{itemize}
\item if $1\leq l\leq s_0-1$, then $\tilde w^l\in \widetilde{\mathcal{M}}_{k_l}^{\rho,C}(J_+,\tau_{l},\tau_{l+1})$ for some  $k_l\geq 1,\tau_l \in\mathcal{T}^{\rho,C}(\lambda_0)$.
\item $\tilde w^{s_0}\in \mathcal{M}_{k_{s_0}}^{\rho,C}(\bar J,\tau_{s_0},\tau_{s_0+1})$ for some  $k_{s_0}\geq 0,\tau_{s_0} \in\mathcal{T}^{\rho,C}(\lambda_0)$.
\item if $s_0+1\leq l\leq q$, then $\tilde w^l\in \widetilde{\mathcal{M}}_{k_l}^{\rho,C}(J_-,\tau_{l},\tau_{l+1})$ for some  $k_l\geq 1,\tau_l \in\mathcal{T}^{\rho,C}(c\lambda_0)$,
\end{itemize}
where $\tau_1 = \tau$ and $\tau_{q+1} = \tau'$.

Since $\tilde w$ is the SFT limit of a sequence of pseudoholomorphic strips with Fredholm index 1 we have $1=I_F(\widetilde{w})=\sum_{l=1}^{q} k_l \geq q-1$. If $q=1$ then $s_0=1$, $k_1=1$ and $\tilde w \in \mathcal{M}^{\rho,C}_1(\bar J, \tau,\tau')$. However, this case is ruled out since we are assuming that the sequence $\tilde w_n$ is converging to one of the boundary points of $I$. It follows that $q=2$. We then have two possibilities:
\begin{itemize}
    \item If $s_0=1$, then $k_1=0$, $k_2=1$ and  \begin{equation}\label{sit1} \tilde w^1 \in \mathcal{M}_0^{\rho,C} (\bar J,\tau,\tau_2)\mbox{ and }\tilde w^2 \in \widetilde{\mathcal{M}}^{\rho,C}_1 (J_-,\tau_2,\tau'), \end{equation} for some $\tau_2 \in \mathcal{T}^{\rho,C}(c\lambda_0)$.
    \item If $s_0=2$, then $k_1=1$, $k_2=0$ and  \begin{equation}\label{sit2} \tilde w^1 \in \widetilde{\mathcal{M}}^{\rho,C}_1 (J_+,\tau,\tau_2)\mbox{ and }\tilde w^2 \in \mathcal{M}_0^{\rho,C} (\bar J,\tau_2,\tau'), \end{equation} for some $\tau_2 \in \mathcal{T}^{\rho,C}(\lambda_0)$.
\end{itemize}

We conclude that we can associate to each boundary point of $I \subset \mathcal{M}_1^{\rho,C} (\bar J,\tau,\tau')$ a $2$-level pseudoholomorphic building either as in \eqref{sit1} or as in \eqref{sit2}. On the other hand the gluing theorem describes a neighbourhood of such 2-level buildings. This neighbourhood is homeomorphic to the interval $[0, +\infty)$, taking $0$ to the 2-level pseudoholomorphic building and all other points to elements in $\mathcal{M}^{\rho,C}_1 (\bar J,\tau,\tau')$.

The discussion above shows that the compactification of $\mathcal{M}^{\rho,C}_1 (\bar J,\tau,\tau')$ has the structure of a $1$-dimensional manifold with boundary and consists of the disjoint union of finitely many circles and compact intervals. In particular the number of boundary components of this $1$-dimensional manifold is even.

Now observe that $(d^{\rho}_J \circ  \Phi_{\bar J} - \Phi_{\bar J} \circ d^{\rho}_J)(\tau)$ has the form  $\sum_{\tau' \in \mathcal{T}^{\rho,C}(c\lambda_0)} l(\tau,\tau')_{\Z_2}\tau'$, where $$l(\tau,\tau'):=\sum_{\tau''\in \mathcal{T}^{\rho,C}(\lambda_0)}n(\tau,\tau'')m(\tau'',\tau')+\sum_{\tau''\in \mathcal{T}^{\rho,C}(c\lambda_0)}m(\tau,\tau'')n(\tau'',\tau'),$$ $m(\tau_1,\tau_2)=\# \mathcal{M}_0^{\rho,C}(\bar J,\tau_1,\tau_2)$ for $\tau_1 \in \mathcal{T}^{\rho,C}(\lambda_0)$ and $\tau_2 \in \mathcal{T}^{\rho,C}(c\lambda_0)$, and $n(\tau_1,\tau_2) \# \widetilde{\mathcal{M}}^{\rho,C}_1(J,\tau_1,\tau_2)$ for $\tau_1, \tau_2 \in \mathcal{T}^{\rho,C}(\lambda_0)$ or $\tau_1, \tau_2 \in \mathcal{T}^{\rho,C}(c\lambda_0)$.

The description above implies that $l(\tau,\tau')$ coincides with the number of boundary components of the compact intervals contained in the compactification of $\mathcal{M}^{\rho,C}_1 (\bar J,\tau,\tau')$. This number is even and hence $l(\tau,\tau')_{\Z_2}=0$. The lemma follows.
\qed

\subsection{Algebraic homotopy} Next we prove a result which is the main tool in the argument of our forcing results. Let $(\mathbb{R} \times Y,d\kappa_0)$ be an exact symplectic cobordism from the contact form $\lambda_0$ to $c\lambda_0$, $0<c<1$,  and let $\mathcal{L}:= \R \times \Lambda$ and $\widehat{\mathcal{L}}:= \R \times \widehat{\Lambda}$. The cylinders $\mathcal{L}$ and $\widehat{\mathcal{L}}$ are exact Lagrangian cobordisms in $(\mathbb{R} \times Y,d\kappa_0)$ from $\Lambda$ to itself and from $\widehat\Lambda$ to itself, respectively. We fix $\rho \in \pi_1(Y\setminus L,\Lambda \to \widehat \Lambda)$ and choose $J_{+}\in \mathcal{J}^\rho_{\rm reg}(\lambda_0)$. Let $\psi:\R \times Y \to \R \times Y$ be the diffeomorphism $\psi(a,x)=(a/c,x)$. Then $J_-:=\psi^* J_+$ lies in $\mathcal{J}^\rho_{\rm reg}(c \lambda_0)$ as one easily verifies. Let $\bar J_0 \in \mathcal{J}^{L,\rho}_{\rm reg}(J_+,J_-)$.

Now let $h_1:\R \to \R$ be a smooth diffeomorphism satisfying $h_1(a)=a/c$ if $a\leq -1$ and $h_1(a)=a$ if $a \geq 1$. Then the diffeomorphism $\psi_1:\R \times Y \to \R \times Y$, $\psi_1(a,x):=(h(a),x)$ satisfies $\bar J_1:=\psi_1^* J_+ \in \mathcal{J}^{L}(J_+,J_-)$.  Moreover $\mathcal{L}$ and $\widehat{\mathcal{L}}$ are invariant by $\psi_1$, and since $J_+\in \mathcal{J}^\rho_{\rm reg}(\lambda_0)$ we have $\bar J_1\in \mathcal{J}_{\rm reg}^{L,\rho}(J_+,J_-)$.  Indeed, the chain map $\Phi_{\bar J_1}$ count curves in $\mathcal{M}_0(\bar J_1,\tau,\tau')$ which, by the definition of $\bar J_1$ and the fact that $\mathcal{L}$ and $\widehat{\mathcal{L}}$ are invariant by $\psi_1$, correspond to curves in $\mathcal{M}_0(J_+,\tau,\tau')$. By the regularity of $J_+$, the latter curves are necessarily cylinders over $\lambda_0$-Reeb chords. Therefore, if $\tau \in \mathcal{T}^{\rho,C}(\lambda_0)$, then $$\Phi_{\bar J_1}(\tau)=\tau'\in \mathcal{T}^{\rho,C}(c\lambda_0)\mbox{ where } \tau'(\cdot)=\tau(c\cdot).$$

Consider the exact symplectic cobordism $(\R \times Y, d\kappa_1:= d(e^s h_1'(s) \lambda_0))$ from $\lambda_0$ to $c\lambda_0$. Again $\mathcal{L}$ and $\widehat{\mathcal{L}}$ are exact Lagrangian cobordisms from $\Lambda$ to itself and from $\widehat\Lambda$ to itself, respectively.

We assume there exists an isotopy $(\mathbb{R} \times Y,d\kappa_t)$, $t\in [0,1]$, of exact symplectic cobordisms from $\lambda_0$ to $c\lambda_0$. For each $t\in [0,1]$ $\mathcal{L}$ and $\widehat{\mathcal{L}}$ are exact Lagrangian cobordisms in $(\mathbb{R} \times Y,d\kappa_t)$ from $\Lambda$ to itself and from $\widehat \Lambda$ to itself, respectively.

We introduce a space of homotopies of almost complex structures. We denote by $\widehat{\mathcal{J}}(\bar J_0,\bar J_1)$ the space of smooth homotopies $\bar J_t \in  \mathcal{J}^L(J_+,J_-), t\in [0,1],$ where each $\bar J_t$ is $d\kappa_t$-compatible.

For $\tau\in \mathcal{T}^{\rho,C}(\lambda_0)$, $\tau' \in \mathcal{T}^{\rho,C}(c\lambda_0)$ and $k\geq 0$ we consider the moduli spaces
\begin{equation}
\widehat{\mathcal{M}}^{\rho,C}_k(\bar J_t, \tau,\tau')= \{(t,\tilde w) : t\in[0,1] \mbox{ and }  \tilde w \in \mathcal{M}^{\rho,C}_{k-1}(\bar J_t, \tau,\tau')\},
\end{equation} where each $\tilde w\in \mathcal{M}^{\rho,S}_{k-1}(\bar J_t, \tau,\tau')$ maps the boundary components $\{0\}\times \R$ and $\{1\} \times \R$ into $\mathcal{L}$ and $\widehat{\mathcal{L}}$, respectively.

Using results from \cite{Ab1,AD,Rizzel,Dr,Momin}, one obtains that for a generic subset $\widehat{\mathcal{J}}_{\rm reg}^{\rho}(\bar J_0,\bar J_1)$ of $\widehat{\mathcal{J}}(\bar J_0,\bar J_1)$ the moduli spaces $\widehat{\mathcal{M}}^{\rho,C}_k(\bar J_t,\tau,\tau')$ are Fredholm regular for all $\tau,\tau'$ and $k\geq 0$, forming a $k$-dimensional manifold with boundary, whose boundary is $$\left(\{0\}\times \mathcal{M}^{\rho,C}_{k-1}(\bar J_0,\tau,\tau')\right) \cup \left(\{1\}\times \mathcal{M}^{\rho,C}_{k-1}(\bar J_1,\tau,\tau')\right).$$ In particular, $\widehat{\mathcal{M}}^{\rho,C}_0(\bar J_t, \tau,\tau')$ is a finite $0$-dimensional manifold and the regularity of $\bar J_0$ and $\bar J_1$ implies that pairs $(t,\tilde w)\in \widehat{\mathcal{M}}^{\rho,C}_0(\bar J_t, \tau,\tau')$ satisfy $t\not\in \{0,1\}$.

We define the map $$\mathfrak{K}_{\bar J_t}: {\rm LCC}^{\rho,C}(\lambda_0) \to {\rm LCC}^{\rho,C}(c\lambda_0),$$ by $$\mathfrak{K}_{\bar J_t}(\tau) := \sum_{\tau'\in \mathcal{T}^{\rho,C}(\lambda_0)} \#_{\Z_2}\widehat{\mathcal{M}}^{\rho,C}_0(\bar J_t, \tau,\tau')\tau'.$$

\begin{prop} \label{propchainhom} Under the conditions above, the map $\mathfrak{K}_{\bar J_t}$ is well defined and $\bar \Phi_{\bar J_0}$ is chain homotopic to $\Phi_{\bar J_1}$, i.e.,  \begin{equation}\label{alghomotopy}\Phi_{\bar J_0}-\Phi_{\bar J_1} =\mathfrak{K}_{\bar J_t} \circ d^{\rho}_{J_+}  + d^{\rho}_{J_-} \circ \mathfrak{K}_{\bar J_t}.\end{equation} In particular, the maps $\bar \Phi_{\bar J_0}$ and $\bar \Phi_{\bar J_1}$, defined on the homology level, coincide. %if ${\rm LCH}^{\rho,S}(J_+)\neq 0$, then $\Phi_{\bar J_0} : {\rm LCC}^{\rho,S}(\lambda_0) \to {\rm LCC}^{\rho,S}(c\lambda_0)$, defined in \eqref{defchainmap}, is non-trivial.
\end{prop}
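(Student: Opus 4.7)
The plan is to adapt the argument of Proposition \ref{prophomiswelldefined} to the parametrized setting, combining SFT compactness with parametric transversality. First, I would establish that $\mathfrak{K}_{\bar J_t}$ is well defined by verifying, for each pair $\tau \in \mathcal{T}^{\rho,C}(\lambda_0)$ and $\tau' \in \mathcal{T}^{\rho,C}(c\lambda_0)$, that $\widehat{\mathcal{M}}^{\rho,C}_0(\bar J_t,\tau,\tau')$ is finite and that for fixed $\tau$ only finitely many $\tau'$ yield a non-empty moduli space. Any sequence $(t_n,\wtil_n)$ in this $0$-dimensional parametric moduli space admits a subsequence with $t_n \to t_\infty \in [0,1]$ along which $\wtil_n$ SFT-converges. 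Bubbling-off of pseudoholomorphic planes or half-planes is ruled out exactly as in the proof of Proposition \ref{prophomiswelldefined} using hypotheses (a)--(d) together with the fact that $\R \times L$ is a union of $\bar J_t$-holomorphic cylinders, so positivity and stability of intersections forbid bubbles asymptotic to orbits or chords in $L$ or to contractible ones in $Y \setminus L$. Action decrease along all components and condition (e) bound the number of relevant $\tau'$.

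To prove identity \eqref{alghomotopy}, I would study the SFT compactification of the $1$-dimensional parametric moduli space $\widehat{\mathcal{M}}^{\rho,C}_1(\bar J_t,\tau,\tau')$. By the discussion preceding the statement, its boundary already contains $\bigl(\{0\}\times \mathcal{M}^{\rho,C}_0(\bar J_0,\tau,\tau')\bigr) \cup \bigl(\{1\}\times \mathcal{M}^{\rho,C}_0(\bar J_1,\tau,\tau')\bigr)$. The additional boundary points come from SFT limits at $t_n \to t_\infty \in (0,1)$. After excluding all bubbling and all intersections with $\R \times L$ as above, the Fredholm index identity $1=\sum_l k_l$ caps the number of levels at two and forces exactly two configurations: either a $J_+$-holomorphic strip of index $1$ from $\tau$ to some $\tau''\in \mathcal{T}^{\rho,C}(\lambda_0)$ on top of a $\bar J_{t_\infty}$-strip of parametric index $0$ from $\tau''$ to $\tau'$, or a $\bar J_{t_\infty}$-strip of parametric index $0$ from $\tau$ to some $\tau''\in \mathcal{T}^{\rho,C}(c\lambda_0)$ on top of a $J_-$-holomorphic strip of index $1$ from $\tau''$ to $\tau'$. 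The parametric gluing theorem in the spirit of \cite{Ab1,AD} conversely produces, near each such broken building, a half-open arc inside $\widehat{\mathcal{M}}^{\rho,C}_1(\bar J_t,\tau,\tau')$.

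Counting the boundary of this compact $1$-manifold modulo $2$ coefficient by coefficient, the $\{0\}$-stratum contributes $\Phi_{\bar J_0}(\tau)$, the $\{1\}$-stratum contributes $\Phi_{\bar J_1}(\tau)$, buildings of the first type contribute the $\tau'$-coefficient of $\mathfrak{K}_{\bar J_t}\circ d^{\rho}_{J_+}(\tau)$, and buildings of the second type contribute the $\tau'$-coefficient of $d^{\rho}_{J_-}\circ \mathfrak{K}_{\bar J_t}(\tau)$. Vanishing of the total mod $2$ then gives \eqref{alghomotopy}, whence $\bar \Phi_{\bar J_0}=\bar \Phi_{\bar J_1}$ on homology.

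The main obstacle will be the parametric transversality and gluing analysis at $t_\infty \in (0,1)$: one must check that for generic $\bar J_t \in \widehat{\mathcal{J}}_{\rm reg}^\rho(\bar J_0,\bar J_1)$ the intermediate $\bar J_{t_\infty}$-piece of a broken configuration has parametric Fredholm index exactly $0$ (so that gluing absorbs the $t$-variable and yields families in $\widehat{\mathcal{M}}^{\rho,C}_1$ rather than copies of $\mathcal{M}^{\rho,C}_0(\bar J_{t_\infty},\cdot,\cdot)$), and that no limiting $\bar J_{t_\infty}$-component acquires an intersection with $\R\times L$ that the approximating curves did not have. Once these analytic points are settled, the boundary count is formally identical to the one already carried out in Proposition \ref{prophomiswelldefined}.
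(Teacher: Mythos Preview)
Your proposal is correct and follows essentially the same approach as the paper's proof: exclude bubbling via hypotheses (a)--(d) and positivity of intersections, then use SFT compactness plus the parametric Fredholm index count to identify the boundary configurations of $\widehat{\mathcal{M}}^{\rho,C}_1(\bar J_t,\tau,\tau')$ as the $\{0\},\{1\}$ strata together with the two types of two-level buildings, and conclude by gluing and the mod $2$ boundary count. The paper makes slightly more explicit the index bookkeeping (the cobordism level may have usual index $-1$, hence parametric index $0$, giving $1=k_{s_0}+1+\sum_{l\neq s_0}k_l\geq q-1$) and the fact that regularity of $\bar J_0,\bar J_1$ forces $t_\infty\notin\{0,1\}$ for the broken configurations, both of which you allude to but do not spell out.
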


\proof  To show that $\mathfrak{K}_{\bar J_t}$ is well defined, fix $\tau \in \mathcal{T}^{\rho,C}(\lambda_0)$ and $\tau'\in \mathcal{T}^{\rho,C}(c\lambda_0)$. We prove that $\widehat{\mathcal{M}}^{\rho,C}_0(\bar J_t, \tau,\tau')$ is finite. Indeed, take a sequence $(t_n,\tilde w_n)\in \widehat{\mathcal{M}}^{\rho,C}_0(\bar J_t, \tau,\tau')$. We may assume that $t_n \to t^* \in [0,1]$. Arguing as in the proof of Lemma \ref{lemmadifferential} we conclude that no sequence of bubbling-off points exists. Indeed, if this is not the case then, up to extraction of a subsequence, the SFT limit of $\tilde w_n$ contains a pseudoholomorphic plane or a pseudoholomorphic half-plane, which is $\bar J_{t^*},$ $J_+$ or $J_-$-holomorphic and does not intersect $\R \times L$, by positivity and stability of intersections. The plane is asymptotic to a contractible Reeb orbit and the half-plane is asymptotic to a contractible Reeb chord in $Y \setminus L$ either from $\Lambda$ to itself or from $\widehat \Lambda$ to itself. This contradicts conditions (a)-(d)  satisfied by $\lambda_0$ since it is adapted to $(Y\setminus L, \Lambda \to \widehat \Lambda)$. Hence no sequence of bubbling-off points exists.

Since the Fredholm index of $(t_n,\tilde w_n)$ is $0$, the SFT compactness theorem and stability and positivity of intersections tell us that up to a subsequence $(t_n,\tilde w_n)$ converges to a building $\tilde w$ with finitely many strips filtered by $\rho$ and $C$, precisely one of them being $\bar J_{t^*}$-holomorphic, all of them having Fredholm index at least $0$ and so that the total Fredholm index is $0$. Since the curves in $\tilde w$ which are $J_+$ or $J_-$-holomorphic have Fredholm index at least $1$, we conclude that $\tilde w$ consists of a single $\bar J_{t^*}$-holomorphic curve in $\mathcal{M}^{\rho,C}_{-1}(\bar J_{t^*},\tau, \tau')$ and hence $(t^*,\tilde w)\in \widehat{\mathcal{M}}^{\rho,C}_{0}(\bar J_{t},\tau, \tau')$.  By the regularity of $\bar J_0$ and $\bar J_1$, we know that $t^*\not\in \{0,1\}$. Since $\widehat{\mathcal{M}}^{\rho,C}_{0}(\bar J_{t},\tau, \tau')$ is a $0$-dimensional manifold $(t_n,\tilde w_n)$ must be eventually constant equal to $(t^*,\tilde w)$ which implies that $\mathcal{M}^{\rho,C}_0(\bar J,\tau, \tau')$ is finite. Now condition (e) and the fact that $\Phi_{\bar J}$ decreases action  imply that for fixed $\tau$, there exists only finitely many $\tau'$ for which $\widehat{\mathcal{M}}^{\rho,C}_0(\bar J_t,\tau, \tau')$ is non-empty. Hence $\mathfrak{K}_{\bar J_t}$ is well defined.

The proof of \eqref{alghomotopy} is a combination of the SFT compactness theorem and the gluing theorem. Let $I\subset \widehat{\mathcal{M}}^{\rho,S}_1(\bar J_t, \tau,\tau')$ be  a connected component homeomorphic to an interval and let $(t_n,\tilde w_n) \in I$ be a sequence converging to one of its ends or to one of its boundary components. Arguing as in the proof of Lemma \ref{lemmadifferential} we conclude that no bubbling-off point exists. Indeed, if this is not the case, then, up to extraction of a subsequence, we may assume that $t\to t^*\in [0,1]$ and the SFT limit of $\tilde w_n$ contains a pseudoholomorphic plane or a pseudoholomorphic half-plane which is $\bar J_{t^*},$ $J_+$ or $J_-$-holomorphic and does not intersect $\R \times L$, by positivity and stability of intersections. The plane is asymptotic to a contractible Reeb orbit and the half-plane is asymptotic to a contractible Reeb chord in $Y\setminus L$ either from $\Lambda$ to itself or from $\widehat \Lambda$ to itself. We know that this contradicts conditions (a)-(d) satisfied by $\lambda_0$. Hence, from stability and positivity of intersections, the SFT compactness theorem and condition (e) imply the existence of a subsequence of $(t_n,\tilde w_n)$ converging to $(t^*,\tilde w)$ for some $t^*\in [0,1]$, where $\tilde w$ is a pseudoholomorphic building with $q$-levels, each level consisting of a somewhere injective finite energy strip $\tilde w^l$ so that, for some $1\leq s_0\leq q$, we have
\begin{itemize}
\item if $1\leq l\leq s_0-1$, then $\tilde w^l\in \widetilde{\mathcal{M}}_{k_l}^{\rho,C}(J_+,\tau_{l},\tau_{l+1})$ for some  $k_l\geq 1,\tau_l \in\mathcal{T}^{\rho,C}(\lambda_0)$.
\item $\tilde w^{s_0}\in \mathcal{M}_{k_{s_0}}^{\rho,C}(\bar J_{t^*},\tau_{s_0},\tau_{s_0+1})$ for some  $k_{s_0}\geq -1,\tau_{s_0} \in\mathcal{T}^{\rho,C}(\lambda_0)$.
\item if $s_0+1\leq l\leq q$, then $\tilde w^l\in \widetilde{\mathcal{M}}_{k_l}^{\rho,C}(J_-,\tau_{l},\tau_{l+1})$ for some  $k_l\geq 1,\tau_l \in\mathcal{T}^{\rho,C}(c\lambda_0)$,
\end{itemize}
where $\tau_1 = \tau$ and $\tau_{k+1} = \tau'$.

Since $(t^*,\tilde w)$ is the SFT limit of a sequence of pseudoholomorphic strips of Fredholm index 1 and the homotopy $\{\bar J_t,t\in [0,1]\}$ is regular, we have $$1=I_F(\widetilde{w})=\sum_{l=1}^q I_F(\tilde w_l) = k_{s_0}+1+ \sum_{l\neq s_0}k_l  \geq -1+1 + q-1=q-1.$$ Hence $q\in \{1,2\}$.

If $q=1$, then $s_0=1$, $k_1=0$ and $(t^*,\tilde w) \in \widehat{\mathcal{M}}^{\rho,C}_1(\bar J_{t^*}, \tau,\tau')$. In this case, we necessarily have $t^*\in \{0,1\}$ since, otherwise, by the regularity of the homotopy $\bar J_t,t\in [0,1],$ if $t^*\in (0,1)$ then $(t^*,\tilde w)$ lies in the interior of $I$. This is a  contradiction with the fact that $(t_n,\tilde w_n)$ is converging to a boundary point of $I$. We conclude that \begin{equation}\label{sita} \tilde w \in \mathcal{M}^{\rho,C}_0(\bar J_{t^*},\tau,\tau') \mbox{ where } t^*\in \{0,1\}.\end{equation} Observe that such a pseudoholomorphic strip $\tilde w$ enters in the computation of $\Phi_{\bar J_{t^*}}(\tau)$.

Now assume that $q=2$. If $s_0=1$, then $k_1=-1$, $k_2=1$ and  \begin{align}\label{sito1} \tilde w^1 \in \mathcal{M}_{-1}^{\rho,C} (\bar J_{t^*},\tau,\tau_2)\mbox{ and }\tilde w^2 \in \widetilde{\mathcal{M}}^{\rho,C}_1 (J_-,\tau_2,\tau') \\ \nonumber \mbox{ for some }\tau_2 \in \mathcal{T}^{\rho,S}(c\lambda_0). \end{align} By the regularity of $\bar J_0$ and $\bar J_1$, we necessarily have $t^*\not\in\{0,1\}$ . Observe that the $2$-building formed by $\tilde w_1$ and $\tilde w_2$ in \eqref{sito1} enters in the computation of $d^{\rho}_{\bar J_-} \circ \mathfrak{K}_{\bar J_{t}}(\tau)$.

If $s_0=2$, then $k_1=1$, $k_2=-1$ and  \begin{align}\label{sito2} \tilde w^1 \in \widetilde{\mathcal{M}}^{\rho,C}_1 (J_+,\tau,\tau_2) 
 \mbox{ and }\tilde w^2 \in \mathcal{M}_{-1}^{\rho,C} (\bar J_{t^*},\tau_2,\tau') \\ \nonumber \mbox{ for some }\tau_2 \in \mathcal{T}^{\rho,C}(\lambda_0). \end{align} By the regularity of $\bar J_0$ and $\bar J_1$, we necessarily have $t^*\not\in\{0,1\}$. Observe that the $2$-building formed by $\tilde w_1$ and $\tilde w_2$ in \eqref{sito2} enters in the computation of $\mathfrak{K}_{\bar J_{t}}\circ d^{\rho}_{J_+}(\tau)$.

We conclude that to each boundary point of $ \widehat{\mathcal{M}}_1^{\rho,S} (\bar J,\tau,\tau')$ we can associate:
\begin{itemize}
    \item either an element of $\widetilde{\mathcal{M}}^{\rho,S}_0(\bar J_{t^*},\tau,\tau')$ as in \eqref{sita} which is counted either by $\Phi_{\bar J_{0}}$ or by $\Phi_{\bar J_{1}}$,
    \item or a $2$-level pseudoholomorphic building either as in \eqref{sito1} or as in \eqref{sito2}, which are counted by $d^{\rho}_{\bar J_-} \circ \mathfrak{K}_{\bar J_{t}}(\tau)$ or $\mathfrak{K}_{\bar J_{t}}\circ d^{\rho}_{\bar J_+}(\tau)$, respectively.
\end{itemize}
On the other hand the gluing theorem describes a neighbourhood of 2-level buildings as in \eqref{sito1} and \eqref{sito2}. This neighbourhood is homeomorphic to the interval $[0, +\infty)$, taking $0$ to the 2-level building and all other points to elements in  $\widehat{\mathcal{M}}^{\rho,C}_1 (\bar J_t,\tau,\tau')$.

The compactification of $\widehat{\mathcal{M}}_1^{\rho,S} (\bar J_t,\tau,\tau')$ has the structure of a $1$-dimensional compact manifold with boundary and the number of boundary components of this $1$-dimensional manifold is even. The discussion above shows that using $\Z_2$-coefficients, $(\Phi_{\bar J_0}-\Phi_{\bar J_1}-\mathfrak{K}_{\bar J_t} \circ d^{\rho}_{J_+}  -d^{\rho}_{J_-} \circ \mathfrak{K}_{\bar J_t})(\tau)$ coincides with the number of boundary points of $\widehat{\mathcal{M}}_1^{\rho,S} (\bar J_t,\tau,\tau')$, for fixed $\tau \in \mathcal{T}^{\rho,C}(\lambda_0)$ and varying $\tau'\in \mathcal{T}^{\rho,C}(c\lambda_0)$. This number is zero proving \eqref{alghomotopy}. The proposition follows. \qed

\section{Homotopical growth rate of Legendrian contact homology and forcing of topological entropy} \label{section4}

In this section we consider a quintuple $(Y,\xi,L,\Lambda,\widehat \Lambda)$, where $(Y,\xi)$ is a closed connected contact $3$-manifold, $L$ is a transverse link and $\Lambda,\widehat \Lambda\subset Y \setminus L$ are disjoint closed Legendrian knots on $(Y,\xi)$.

Let $\lambda_0$ be a contact form on $(Y,\xi)$ adapted to $(Y\setminus L, \Lambda \to \widehat \Lambda)$. In particular $L$ is a set of Reeb orbits of $\lambda_0$.
 For each $C>0$, we denote by $\Omega^C(\lambda_0, \Lambda \to \widehat \Lambda)\subset \pi_1(Y\setminus L, \Lambda \to \widehat \Lambda)$ the subset of homotopy classes $\rho$ of paths in $Y \setminus L$ from $\Lambda$ to $\widehat \Lambda$ for which
\begin{itemize}
\item[(i)] all Reeb chords of $R_{\lambda_0}$ in the class $\rho$ are transverse and have action $\leq C$.
\item[(ii)] ${\rm LCH}^{\rho,C}(J)$ is well defined and does not vanish for all $J \in \mathcal{J}_{\rm reg}(\lambda_0)$.
\end{itemize}
Notice that conditions (i) and (ii) above imply that ${\rm LCH}^{\rho,C^*}(J)\simeq{\rm LCH}^{\rho,C}(J)\neq 0$  for all $C^*>C$.

\begin{defn}
Let $\lambda_0$ be a contact form on $(Y,\xi)$ which admits $L$ as a set of Reeb orbits and moreover is adapted to $(Y\setminus L,\Lambda \to \widehat \Lambda)$. The number
\begin{equation}
\limsup_{C \to +\infty} \frac{\log (\#\Omega^C(\lambda_0, \Lambda \to \widehat \Lambda))}{C}
\end{equation}
is called the exponential homotopical growth rate of $LCH_L(\lambda_0,\Lambda \to \widehat \Lambda)$. When $\limsup_{C \to +\infty} \frac{\log (\#\Omega^C(\lambda_0, \Lambda \to \widehat \Lambda))}{C}>0$, we say that $LCH_L(\lambda_0,\Lambda \to \widehat \Lambda)$ has exponential homotopical growth. 
\end{defn}

We now proceed to prove theorem \ref{theorem:growthleg}.

\subsection{Proof of Theorem \ref{theorem:growthleg}.}

%\begin{thm} \label{theorem:growthleg1}Assume that there exists a contact form $\lambda_0$  on $(Y,\xi)$ which admits $L$ as a set of Reeb orbits and, moreover, is adapted to $(Y\setminus L,\Lambda \to \widehat \Lambda)$, and that $LCH_L(\lambda_0,\Lambda \to \widehat \Lambda)$ has positive exponential homotopical growth rate. Then, the link $L$ forces topological entropy in $(Y,\xi)$. 

%Moreover, if the exponential homotopical growth rate of $LCH_L(\lambda_0,\Lambda \to \widehat \Lambda)$ is denoted by $a$, then for every contact form $\lambda$ on $(Y,\xi)$ which has $L$ as a set of Reeb orbits, we have
%\begin{equation}    h_{top}(\phi_\lambda) \geq \frac{a}{\max f_\lambda}, \end{equation}
%where $f_\lambda$ is the function such that $\lambda=f_\lambda \lambda_0$. \end{thm}

\begin{defn}We denote by $N^C(\lambda,\Lambda \to \widehat \Lambda)$ the number of distinct Reeb chords of $X_\lambda$ from $\Lambda$ to $\widehat \Lambda$ and which have action $\leq C$. 
\end{defn}

By Weinstein's tubular neighbourhood theorem for Legendrian submanifolds, see Geiges \cite{GeigesBook},  $\widehat \Lambda$ has a tubular neighbourhood $\mathcal{V} \subset Y$, disjoint from $L$ and $\Lambda$, such that $(\mathcal{V},\xi|_{\mathcal{V}})$ is contactomorphic to the solid torus $(S^1 \times \mathbb{D},\ker (dx + \theta d y))$, where $\theta \in S^1$ and $z=(x,y)\in \mathbb{D}$. In such coordinates,  $\widehat \Lambda \equiv S^1 \times \{0\}$. We denote by $\widehat \Lambda^z,z\in \D,$ the Legendrian knot $S^1 \times \{z\}$.  Below we consider the fibration of $\mathcal{V}\to \D$ induced by $\widehat \Lambda^z$. We call such a neighbourhood $\mathcal{V}$ together with a fixed choice of contactomorphism to $(S^1 \times \mathbb{D},\ker (dx + \theta d y))$ a parametrized Legendrian tubular neighbourhood of $\widehat{\Lambda}$.
%We will start by presenting a result which shows how to estimate the growth of $N_C(\alpha,\Lambda,\Lambda')$ using the strip Legendrian contact homology.

We introduce some more terminology. We will consider the Lebesgue measure on the unit disk $\mathbb{D}$. Therefore, when we say that a set $A \subset B$ has full measure in a subset $B \subset \mathbb{D}$, we mean this with respect to this measure.
Moreover, for each $1>\epsilon>0$ we let $ \overline{\mathbb{D}}_\epsilon$ be the set $\{z \in \mathbb{D} \ | \ |z| \leq \epsilon \}$.

The following uniform estimate will be crucial to obtain positivity of topological entropy for a Reeb flow on $(Y,\xi)$ admitting $L$ as a set of Reeb orbits. 

\begin{prop} \label{mainproposition}Assume that $LCH_L(\lambda_0,\Lambda \to \widehat \Lambda)$ has positive exponential homotopical growth rate, which we denote by $a$. Let $\lambda$ be a contact form on $(Y,\xi)$ so that all knots in $L$ are $\lambda$-Reeb orbits. Let $\mathcal{V}\subset Y\setminus L$ be a parametrized Legendrian tubular neighbourhood of $\widehat{\Lambda}$.  Then, given $1>\epsilon>0$, there exist numbers $\delta>0$, a  subset $\mathcal{U}_\delta \subset \overline{\D}_\delta$ of full measure, $d$ and an increasing sequence $C_n\to +\infty$ such that $$\min_{|z|\leq\delta} \{N^{C_n}(\lambda,\Lambda \to \widehat \Lambda^z)\}>e^{\frac{a}{{\max f_\lambda - \epsilon}}C_n+d},$$
for all $\widehat{\Lambda}^z$ with $z\in \mathcal{U}_\delta$, and every $C_n$ of the sequence.
\end{prop}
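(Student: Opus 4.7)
The plan is to sandwich $\lambda$ between two scalar multiples of $\lambda_0$ and apply the algebraic homotopy of Proposition~\ref{propchainhom} to translate non-triviality of $\lambda_0$-chord classes into existence of $\lambda$-chords with controlled action, and then to extend this uniformly over a full-measure family of Legendrian perturbations $\widehat{\Lambda}^z$.

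Fix a small $\eta>0$ and set $\lambda_+:=(\max f_\lambda+\eta)\lambda_0$ and $\lambda_-:=c\lambda_0$ with $0<c<\min f_\lambda/(\max f_\lambda+\eta)$. Then $\lambda_+>\lambda>\lambda_-$ pointwise, and since $L$ is a collection of Reeb orbits of each of $\lambda_0, \lambda_\pm$ and $\lambda$, I build exact symplectic cobordisms $(\R\times Y, d\kappa_+)$ from $\lambda_+$ to $\lambda$ and $(\R\times Y, d\kappa_-)$ from $\lambda$ to $\lambda_-$, each with $\R\times L$ a pseudoholomorphic union for admissible $\bar{J}$. Their concatenation is exact-symplectically isotopic to the trivial scaling cobordism from $\lambda_+$ to $\lambda_-$; Proposition~\ref{propchainhom} then gives that the composed chain map $\mathrm{LCC}(\lambda_+)\to \mathrm{LCC}(\lambda_-)$ agrees on homology with the tautological scaling isomorphism, which sends each $\lambda_+$-chord to the same geometric chord reparametrised as a $\lambda_-$-chord.

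Let $C_n\to\infty$ be a sequence witnessing $\#\Omega^{C_n}(\lambda_0,\Lambda\to\widehat{\Lambda})\geq e^{(a-\eta')C_n}$ for some $\eta'\to 0$ as $n\to\infty$. For each $\rho\in\Omega^{C_n}$, rescaling yields a non-zero class in $\mathrm{LCH}^{\rho,(\max f_\lambda+\eta)C_n}(\lambda_+,\Lambda\to\widehat{\Lambda})$; its image in $\mathrm{LCH}^{\rho,(\max f_\lambda+\eta)C_n}(\lambda_-)$ is non-zero by the previous step, and since the composition factors through $\mathrm{LCC}(\lambda)$, the first chain map $\mathrm{LCC}(\lambda_+)\to\mathrm{LCC}(\lambda)$ must send it to a non-trivial chain. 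This produces at least one $\lambda$-chord from $\Lambda$ to $\widehat{\Lambda}$ in the homotopy class $\rho$ with $\lambda$-action $\leq(\max f_\lambda+\eta)C_n$, and distinct $\rho$'s yield geometrically distinct chords.

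To cover the family $\widehat{\Lambda}^z$, I note that for $\delta$ sufficiently small each $\widehat{\Lambda}^z$ with $|z|\leq\delta$ is Legendrian, disjoint from $\Lambda\cup L$, and Legendrian-isotopic to $\widehat{\Lambda}$ inside $\mathcal{V}$; consequently conditions (a)--(d) of Section~\ref{section3} persist and the class set $\Omega^{C_n}(\lambda_0,\Lambda\to\widehat{\Lambda}^z)$ is canonically identified with $\Omega^{C_n}(\lambda_0,\Lambda\to\widehat{\Lambda})$. Transversality condition (e) for $\lambda_0$ and $\lambda_\pm$ at the finitely many relevant action levels and homotopy classes is achieved on a full-measure set $\mathcal{U}_\delta\subset\overline{\D}_\delta$ by applying Sard's theorem to the time-$t$ Reeb endpoint maps $\Lambda\to\mathcal{V}\to\D$ for $t\leq(\max f_\lambda+\eta)C_n$, ranging over the countable collection of $n$ and $\rho$. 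Running the cobordism argument with $\widehat{\Lambda}^z$ in place of $\widehat{\Lambda}$ then produces, uniformly in $z\in\mathcal{U}_\delta$, at least $e^{(a-\eta')C_n}$ distinct $\lambda$-chords of $\lambda$-action $\leq(\max f_\lambda+\eta)C_n$. Setting $C_n':=(\max f_\lambda+\eta)C_n$ and choosing $\eta,\eta'$ small in terms of $\epsilon$ (absorbing sub-leading corrections into an additive constant $d$ so as to match the exponent in the claimed bound) then concludes the argument. The main difficulty I foresee is arranging the \emph{uniform} transversality and Fredholm regularity over all $z\in\mathcal{U}_\delta$ simultaneously: one must combine Sard's theorem for the Reeb endpoint maps with a parametric SFT-type argument ensuring that regular admissible $\bar{J}$'s, the chain-homotopy construction of Proposition~\ref{propchainhom}, and the compactness/no-bubbling analysis of Section~\ref{section3} all persist coherently as $z$ varies through the full-measure set.
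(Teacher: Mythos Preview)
Your overall strategy---sandwich $\lambda$ and use the algebraic homotopy to force $\lambda$-chords---is the right idea and matches the core of the paper's Lemma~\ref{lemma:lowerbound}. But two genuine gaps remain.

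First, you factor the composed chain map through $\mathrm{LCC}(\lambda)$. The complex $\mathrm{LCC}^{\rho,C}(\lambda)$ of Section~\ref{section3} is only defined when $\lambda$ is \emph{adapted} to $(Y\setminus L,\Lambda\to\widehat\Lambda)$, i.e.\ satisfies (a)--(d). A general $\lambda$ having $L$ as a set of Reeb orbits may have contractible Reeb orbits in $Y\setminus L$ or chords from $\Lambda$ to itself trivial in $\pi_1(Y\setminus L,\Lambda)$, so neither $d^2=0$ nor the cobordism chain maps into/out of $\mathrm{LCC}(\lambda)$ are available. The paper bypasses this: it works on a single cobordism from $\lambda_0$ to $c\lambda_0$ equipped with $\bar J_R$ having a long neck equal to $J_\lambda\in\mathcal J(\lambda)$, uses Proposition~\ref{propchainhom} only to see that $\Phi_{\bar J_R}:\mathrm{LCC}(\lambda_0)\to\mathrm{LCC}(c\lambda_0)$ is nontrivial, and then lets $R\to\infty$ and reads off a $\lambda$-chord from the SFT limit building. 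No $\mathrm{LCC}(\lambda)$ is ever formed.

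Second, and this is the step you yourself flag as problematic, your handling of the family $\widehat\Lambda^z$ does not go through. The claimed identification $\Omega^{C_n}(\lambda_0,\Lambda\to\widehat\Lambda^z)\cong\Omega^{C_n}(\lambda_0,\Lambda\to\widehat\Lambda)$ is unjustified: membership in $\Omega^{C_n}$ depends on transversality and action bounds of the $\lambda_0$-chords landing on $\widehat\Lambda^z$, and these genuinely change with $z$. The paper's key device, which your proposal is missing, is Lemma~\ref{lem:change_of_position}: for each small $z$ one picks a contactomorphism $\psi_{\widehat\Lambda^z}$ that is $C^1$-close to the identity, supported in $\mathcal V$, and sends $\widehat\Lambda$ to $\widehat\Lambda^z$; then one sets $\lambda_z:=\psi_{\widehat\Lambda^z}^*\lambda$. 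This turns $N^C(\lambda,\Lambda\to\widehat\Lambda^z)$ into $N^C(\lambda_z,\Lambda\to\widehat\Lambda)$ for the \emph{fixed} pair $(\Lambda,\widehat\Lambda)$, with $|f_{\lambda_z}-f_\lambda|_{C^0}<\epsilon$ and $L$ still a set of $\lambda_z$-Reeb orbits. One then applies the single-form estimate (Lemma~\ref{lemma:lowerbound}) to each $\lambda_z$ separately, always against the fixed set $\Omega^C(\lambda_0,\Lambda\to\widehat\Lambda)$. This eliminates entirely the need for parametric SFT, uniform Fredholm regularity in $z$, or any control on $\Omega^{C_n}(\lambda_0,\Lambda\to\widehat\Lambda^z)$.
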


Because of its length, we divide the proof of the proposition in several lemmas.

\begin{lem} \label{lemma:lowerbound}
Assume that $LCH_L(\lambda_0,\Lambda \to \widehat \Lambda)$ has positive exponential homotopical growth rate, which we denote by $a$. Let $\lambda$ be a non-degenerate contact form on $(Y,\xi)$ so that all knots in $L$ are $\lambda$-Reeb orbits, and such that all Reeb chords in $\mathcal{T}(\lambda_0,\Lambda \to \widehat{\Lambda})$ are non-degenerate. Then
\begin{equation} \label{desig1}
N^{C}(\lambda,\Lambda \to \widehat \Lambda) \geq  {\#\Omega^{\frac{C}{\max f_\lambda}}(\lambda_0,\Lambda \to \widehat \Lambda)},
\end{equation}
where $f_\lambda$ is the function such that $\lambda = f_\lambda \lambda_0$.
\end{lem}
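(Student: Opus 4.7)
The plan is to prove that for every $\rho\in\Omega^{C/\max f_\lambda}(\lambda_0,\Lambda\to\widehat\Lambda)$ there exists at least one $\lambda$-Reeb chord from $\Lambda$ to $\widehat\Lambda$ in the class $\rho$ with action at most $C$. Since chords in distinct homotopy classes are themselves distinct, summing over such $\rho$ yields the asserted inequality.

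Fix such a $\rho$ and set $A := C/\max f_\lambda$. For a small parameter $\epsilon>0$ I introduce the auxiliary contact form
\[
\bar\lambda_\epsilon := \lambda/(\max f_\lambda+\epsilon) = \bigl[f_\lambda/(\max f_\lambda+\epsilon)\bigr]\,\lambda_0 < \lambda_0
\]
pointwise; its Reeb chords coincide with those of $\lambda$, with actions scaled by $1/(\max f_\lambda+\epsilon)$. I next pick $c>0$ so small that $c\lambda_0<\bar\lambda_\epsilon$ as well. The framework of Section~\ref{section3} then yields two exact symplectic cobordisms $(\R\times Y,\varpi^{\rm top}_\epsilon)$ from $\lambda_0$ to $\bar\lambda_\epsilon$ and $(\R\times Y,\varpi^{\rm bot}_\epsilon)$ from $\bar\lambda_\epsilon$ to $c\lambda_0$, both having $\R\times\Lambda$ and $\R\times\widehat\Lambda$ as exact Lagrangians and $\R\times L$ as a symplectic submanifold. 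Counting index-$0$ pseudoholomorphic strips with generic compatible almost complex structures, exactly as in Proposition~\ref{prophomiswelldefined}, one gets filtered chain maps
\[
\Phi^{\rm top}\colon {\rm LCC}^{\rho,A}(\lambda_0)\to {\rm LCC}^{\rho,A}(\bar\lambda_\epsilon),\qquad \Phi^{\rm bot}\colon {\rm LCC}^{\rho,A}(\bar\lambda_\epsilon)\to {\rm LCC}^{\rho,A}(c\lambda_0).
\]

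The central comparison is this: SFT neck-stretching at the hypersurface carrying $\bar\lambda_\epsilon$ identifies the composition $\Phi^{\rm bot}\circ\Phi^{\rm top}$, on homology, with the chain map of the concatenated cobordism from $\lambda_0$ to $c\lambda_0$; Proposition~\ref{propchainhom} identifies the latter with the chain map $\Phi_{\bar J_1}$ of Section~\ref{section3}, which according to the explicit formula $\tau\mapsto\tau^c$ is an isomorphism on ${\rm LCH}^{\rho,A}$. Since $\rho\in\Omega^A(\lambda_0,\Lambda\to\widehat\Lambda)$, the group ${\rm LCH}^{\rho,A}(\lambda_0)$ is non-trivial and $\Phi_{\bar J_1}$ maps it non-trivially; hence $\Phi^{\rm bot}\circ\Phi^{\rm top}$ is non-zero on homology, which forces the intermediate module ${\rm LCC}^{\rho,A}(\bar\lambda_\epsilon)$ to be non-zero. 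This produces a $\bar\lambda_\epsilon$-Reeb chord from $\Lambda$ to $\widehat\Lambda$ in class $\rho$ of action $\leq A$, equivalently a $\lambda$-Reeb chord in class $\rho$ of action $\leq A(\max f_\lambda+\epsilon)=C+\epsilon C/\max f_\lambda$. Non-degeneracy of $\lambda$ makes the set of $\lambda$-chords of any bounded action finite, so letting $\epsilon\to 0$ along a sequence and passing to a subsequence one extracts a $\lambda$-Reeb chord in class $\rho$ of action $\leq C$.

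The main obstacle will be the bubbling-off analysis needed to set up $\Phi^{\rm top}$ and $\Phi^{\rm bot}$: unlike in Proposition~\ref{prophomiswelldefined}, where the target is a rescaling of $\lambda_0$ and inherits its adaptedness, $\bar\lambda_\epsilon$ (hence $\lambda$) is not a priori adapted to $(Y\setminus L,\Lambda\to\widehat\Lambda)$, so bubbled $J_-$- or $\bar J$-holomorphic planes or half-planes asymptotic to contractible Reeb orbits or self-chords of $\bar\lambda_\epsilon$ in $Y\setminus L$ are not automatically excluded by the hypotheses on $\lambda_0$. This is controlled by restricting attention to the action-window $\leq A$ (so that all candidate bubble asymptotes have action $\leq A$), combining the adaptedness conditions (a)--(d) for $\lambda_0$ at the $\lambda_0$-end with positivity and stability of intersections with $\R\times L$ elsewhere, and following the template of Lemma~\ref{lemmadifferential}; the whole argument must be carried out in tandem with the neck-stretched cobordism above to justify the factorization $\Phi^{\rm bot}\circ\Phi^{\rm top}\simeq \Phi_{\bar J_1}$ on ${\rm LCH}^{\rho,A}$.
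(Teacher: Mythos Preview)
Your overall plan---for each $\rho\in\Omega^{C/\max f_\lambda}(\lambda_0,\Lambda\to\widehat\Lambda)$ produce a $\lambda$-chord in class $\rho$ of action at most $C$, then sum over $\rho$---matches the paper's. The mechanism you propose for producing the chord, however, has a real gap.

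The factorization $\Phi^{\rm bot}\circ\Phi^{\rm top}\simeq\Phi_{\bar J_1}$ on homology presupposes a well-defined differential on ${\rm LCC}^{\rho,A}(\bar\lambda_\epsilon)$ and that $\Phi^{\rm top},\Phi^{\rm bot}$ are genuine chain maps. All of this requires $\bar\lambda_\epsilon$ (equivalently $\lambda$) to be adapted to $(Y\setminus L,\Lambda\to\widehat\Lambda)$, which is not assumed. Your final paragraph acknowledges the difficulty, but the remedy you sketch---action filtration plus positivity of intersections with $\R\times L$---does not rule out planes asymptotic to $\lambda$-Reeb orbits contractible in $Y\setminus L$, or half-planes asymptotic to $\lambda$-self-chords trivial in $\pi_1(Y\setminus L,\Lambda)$ or $\pi_1(Y\setminus L,\widehat\Lambda)$. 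Such curves need not touch $\R\times L$ at all and can have arbitrarily small action, so nothing in your toolkit excludes them; with them present $d^2=0$ fails for the middle complex and the factorization is meaningless.

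The paper avoids ever forming a chain complex for $\lambda$. After rescaling so that $f_\lambda<1$, it takes a single cobordism from $\lambda_0$ to $c\lambda_0$ with an almost complex structure $\bar J_R$ equal to some $J_\lambda\in\mathcal J(\lambda)$ on a neck $[-R,R]\times Y$. Proposition~\ref{propchainhom} (which lives entirely between the complexes for $\lambda_0$ and $c\lambda_0$) gives $\bar\Phi_{\bar J_R}=\bar\Phi_{\bar J_1}\neq0$, hence for every $R$ a $\bar J_R$-strip exists. Sending $R\to\infty$ and applying SFT compactness yields a building whose levels avoid $\R\times L$ and have no punctures on orbits in $L$. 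At $\lambda$-levels bubbles \emph{are} allowed---the building may acquire extra negative punctures on contractible $\lambda$-orbits or trivial $\lambda$-self-chords---but the broken-strip domain forces each level to carry one distinguished component with a positive and a negative puncture in class $\rho$. Reading off the $\lambda$-level of this main branch yields a $\lambda$-chord in $\rho$ of action $\leq C$. No intermediate complex, no factorization.
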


\proof Since $N^C(\lambda,\Lambda \to \widehat \Lambda^z)=N^{kC}(k\lambda,\Lambda \to \widehat \Lambda^z),\forall k>0$, it is enough to establish the lemma in the case that the  function $f_\lambda$ satisfies $0<f_\lambda<1$.

%Since $a= \limsup_{C \to +\infty} \frac{\log (\#\Sigma^C(\lambda_0, \Lambda \to \widehat \Lambda))}{C}$, it then follows that for any $\epsilon >0$, there exists $d$ and an increasing sequence $C_n \to \infty$ such that $\#\Sigma^C(\lambda_0, \Lambda \to \widehat \Lambda)) \geq e^{(a-\epsilon)C_n+d}$. This combined with \eqref{desig1} gives which proves the proposition in the particular case of $z=0$. The second step is to show that this estimate holds and is uniform in $z$ for $|z|$  sufficiently small.

Fix $C>0$, $\rho \in \Omega^C(\lambda_0,\Lambda \to \widehat \Lambda^0)$ and $0<c<\min f<1$ small. We shall consider two different symplectic cobordisms from $\lambda_0$ to $c \lambda_0$.

Let $h_1:\R \to \R$ be a smooth diffeomorphism satisfying $h_1(s)= s/c, \forall s\leq -R-1,$ and $h_1(s) = s, \forall s\geq R+1$. Choose $J_+\in \mathcal{J}_{\rm reg}(\lambda_0)$ so that ${\rm LCH}^{\rho,C}(J_+)\neq 0$ and consider the almost complex structure $$\bar J_1:= \psi_1^* J_+,\mbox{ where } \psi_1=h_1\oplus {\rm Id}\mbox{ on } \R \times Y.$$   Note that $\bar J_1\in \mathcal{J}^L(J_+,J_-)$, where $J_-\in \mathcal{J}_{\rm reg}(c\lambda_0)$ satisfies $J_+|_\xi = J_-|_\xi$. Moreover, we can choose an exact symplectic form $d\varsigma_1$ satisfying $\varsigma_1 = e^s\lambda_0$ on $[R+1,+\infty)$ and $\varsigma_1= ce^s\lambda_0$ on $(-\infty,-R-1]$ so that $\bar J_1$ is compatible with $d\varsigma_1$.

The diffeomorphism $\psi_1$ maps $\bar J_1$-holomorphic strips to $J_+$-holomorphic strips.  The regularity of $J_+$ implies that $$\bar J_1 \in \mathcal{J}_{\rm reg}^L(J_+,J_-).$$ In particular, $\bar J_1$-holomorphic strips with Fredholm index $0$ are mapped to cylinders over $\lambda_0$-Reeb chords and thus the chain map $\Phi_{\bar J_1}$ is a reparametrization of Reeb chords, i.e, if $\tau \in \mathcal{T}^{\rho,C}(\lambda_0)$ then $\Phi_{\bar J_1}(\tau) \in \mathcal{T}^{\rho,cC}(c\lambda_0)$, where $\Phi_{\bar J_0}(\tau) = \tau (c \cdot)$.

Since there are no $\lambda_0$-Reeb chords in class $\rho$ with action $>C$, we see that \begin{equation}\label{ntrivial1}{\rm LCH}^{\rho,C}(J_+)\simeq {\rm LCH}^{\rho,C/c}(J_+)\simeq {\rm LCH}^{\rho,C}(J_-)\neq 0.\end{equation} Hence the induced map on the homology level \begin{equation}\label{ntrivial2}\bar \Phi_{\bar J_1}:{\rm LCH}^{\rho,C}(J_+)\to {\rm LCH}^{\rho,C}(J_-)\mbox{ is an isomorphism}.\end{equation}

In order to construct another exact symplectic cobordism between $\lambda_0$ and $c\lambda_0$, fix a smooth function $g_0:\R \times Y \to [c,1]$ satisfying
\begin{itemize}
\item $g_0(s,x)=c,\forall (s,x)\in (-\infty,-2]\times Y$.
\item $g_0(s,x)=f(x), \forall (s,x)\in [-1,1]\times Y$.
\item $g_0(s,x)=1,\forall (s,x) \in [2,+\infty)\times Y$.
\item $\partial_s g_0(s,x) \geq 0,\forall (s,x)\in \R \times Y$.
\end{itemize}
For each $R>0$, let $h_R:\R \times Y \to [c,1]$ be given by
\begin{itemize}
\item $h_R(s,x)=g_0(s+R-1,x), \forall (s,x)\in (-\infty,-R] \times Y$.
\item $h_R(s,x)=f(x), \forall (s,x) \in [-R,R] \times Y$.
\item $h_R(s,x)=g_0(s-R+1,x), \forall (s,x)\in [R,+\infty)$.
\end{itemize}
Observe that $h_R$ is smooth by definition and that $d\varsigma_R$ is an exact symplectic form on $\R \times Y$, where $\varsigma_R:= e^sh_R(s,\cdot)\lambda_0$.

Let $J_\lambda \in \mathcal{J}_{\rm reg}(\lambda)$. We choose a $d\varsigma_R$-compatible $\bar J_R\in \mathcal{L}^L(J_+,J_-)$ so that $\bar J_R|_{[-R,R]\times Y}=J_\lambda$. It induces a chain map $\Phi_{\bar J_R}$ which is non-trivial. In fact, if $\Phi_{\bar J_R}\equiv 0$, then  $\Phi_{\bar J_R}$ is Fredholm regular. Later we shall use the dependence of $\bar J_R$ on $R$.

Consider an isotopy $(\mathbb{R} \times Y,d\varsigma_t)$, $t\in [0,1]$, of exact symplectic cobordisms from $\lambda_0$ to $c\lambda_0$ which starts at $d\varsigma_R$ and ends at $d\varsigma_1$. Assume that $\varsigma_t$ has the form $h_t(s)e^s\lambda_0$ for some smooth increasing function $h_t:\R \to \R$ satisfying  $h_t = 1$ on $[R+1,+\infty)$ and $\varsigma_t = c$ on $(-\infty,-R-1]\times Y$ for all $t\in [0,1]$.

Take $\{\bar J_t,t\in [0,1]\} \in \widehat{\mathcal{J}}(\bar J_0:=J_R,\bar J_1)$ so that $\bar J_t=J_+$ on $[R+1,+\infty) \times Y$ and $\bar J_t = J_-$ on $(-\infty,-R-1]\times Y$ for all $t\in[0,1]$.

We claim that there exist $\tau\in \mathcal{T}^{\rho,C}(\lambda_0), \tau'\in \mathcal{T}^{\rho,C}(c\lambda_0)$ and $k\geq 0$ so that $$\mathcal{M}^{\rho,C}_k(\bar J_R,\tau,\tau')\neq \emptyset.$$ Indeed, if no such $\tau,\tau'$ and $k$  exist, then  $\bar J_R \in \mathcal{J}^{L,\rho}_{\rm reg}(J_+,J_-)$ and the chain map $\Phi_{\bar J_R}: {\rm LCC}^{\rho,C}(\lambda_0)\to {\rm LCC}^{\rho,C}(c\lambda_0)$, defined as in \eqref{defchainmap}, vanishes. The induced map $\bar \Phi_{\bar J_R}$ on the homology level vanishes as well. However, by Proposition \ref{propchainhom}, $\Phi_{\bar J_R}$ is chain homotopic to $\Phi_{\bar J_1}$ and we conclude that $0=\bar \Phi_{\bar J_R}=\bar \Phi_{\bar J_1}$, contradicting \eqref{ntrivial1} and \eqref{ntrivial2}.

We now take an increasing sequence $R_n \to +\infty$, and a sequence of pseudoholomorphic strips $\widetilde w_n \in \mathcal{M}^{\rho,C}(\bar J_{R_n},\tau,\tau')$. Because there is a bound on the energy of all $\widetilde w_n$, the SFT-compactness theorem there exists a subsequence of $\widetilde w_n$ that converges to a pseudoholomorphic building that we will denote by $\widetilde{w}$. We proceed to analyse the structure of the pseudoholomorphic building $\widetilde{w}$. Since the topology of the domain of a pseudoholomorphic curve does not change when there is a breaking, the domain of the pseudoholomorphic building is a broken strip in the sense of \cite{CPT}.

 Let $\widetilde{w}^l$ for $l\in \{1,...,q\}$ be the levels of the pseudoholomorphic building $\widetilde{w}$. Because of positivity and stability of intersections the levels of $\widetilde{w}$ cannot intersect $\R \times L$. Reasoning as in \cite{Momin} we conclude that no level of $\widetilde{w}$ can have an interior puncture asymptotic to a Reeb orbit contained in $L$. Because the domain of $\widetilde{w}$ is a broken strip, the existence of such an interior puncture would imply that there is a pseudoholomorphic building $\widetilde{u}$ formed by pseudoholomorphic curves contained in $\widetilde{w}$, whose domain would be a broken disk, and whose upper level would have only one positive puncture asymptotic to a Reeb orbit in $L$. Since every disk in $Y$ whose boundary is in $L$ has an interior intersection point with $L$, we conclude that $\widetilde{u}$ would have an interior intersection point with $L$, and that would also be true for $\widetilde{w}$. But positive and stability of interior intersection points of pseudoholomorphic curves imply that  $\widetilde{w}$ cannot have an interior intersection point with $L$.
 
This together with the face that the domain of $\widetilde{w}$ is a broken strip, implies that we  must have the following picture.
\begin{itemize}
\item{The upper level $\widetilde{w}^1$ is composed of one pseudoholomorphic disc, with one positive puncture, which is asymptotic to the Reeb chord $\tau$, and several negative boundary and interior punctures. At all its negative punctures this curve is asymptotic to contractible Reeb orbits in $Y\setminus L$,  Reeb chords from $\Lambda$ to itself that represent the trivial element of  $\pi_1(Y,\Lambda)$, or Reeb chords from $\widehat{\Lambda}$ that represent the trivial element of $\pi_1(Y,\widehat{\Lambda})$, with the exception of one negative boundary puncture at which the curve is asymptotic to a Reeb chord $\tau_1$ of $\lambda_0$ or $\lambda$.}
\item{For $l\in \{2,...,q\}$ the level $\widetilde{w}^l$ contains a special curve that has one positive puncture at which it is asymptotic to a Reeb chord $\tau_{l-1}$, of $\lambda_0$, $\lambda$ or $c\lambda_0$, in the homotopy class $\rho$, and possibly several interior and boundary negative punctures. Of the negative boundary punctures there is one at which the curve is asymptotic to a Reeb chord $\tau_l$, of $\lambda_0$, $\lambda$ or $c\lambda_0$, in the homotopy class $\rho$, and at all other negative punctures the curve is asymptotic to contractible Reeb orbits in $Y\setminus L$,  Reeb chords from $\Lambda$ to itself that represent the trivial element of  $\pi_1(Y\setminus L,\Lambda)$, or Reeb chords from $\widehat{\Lambda}$ that represent the trivial element of $\pi_1(Y \setminus L,\widehat{\Lambda})$.}
\end{itemize}
As a consequence we obtain that the level $\widetilde{w}^l$ living in the exact symplectic cobordism from $\lambda$ to $c\lambda_0$ contains a pseudoholomorphic curve with one positive puncture asymptotic to a Reeb chord $\widehat{\tau} \in \mathcal{T}^{\rho}_{\Lambda \to \widehat{\Lambda}}(\lambda)$. The action of $\widetilde{\tau}$ must be smaller than the action of $\tau$, which is $C$. 

We conclude that every $\rho \in \Omega^C(\lambda_0,\Lambda \to \widehat \Lambda)$, has  a Reeb chord of $\lambda$ with action $\leq C$. Since all these Reeb chords are belong to different homotopy classe we obtain that if $0<f_\lambda <1$ 
\begin{equation} \label{desig13}
N^{C_n}(\lambda,\Lambda \to \widehat \Lambda) \geq  \#\Omega^C(\lambda_0,\Lambda \to \widehat \Lambda).
\end{equation}
As observed previously, the case of general $\lambda$ follows easily from this one.
\qed

We now recall the following results from \cite{A2}.
\begin{lem} \cite[Lemma 6]{A2} \label{generic}
Let $\lambda$ be a contact form on $(Y,\xi)$ such that $L$ is a set of Reeb orbits of $\lambda$, and $\Lambda$ and $\widehat{\Lambda}$ be disjoint connected Legendrian submanifolds in $(Y \setminus L,\xi)$. We consider a parametrized Legendrian tubular neighbourhood $\mathcal{V} \subset Y \setminus L$ of $\widehat{\Lambda}$.
Then, there exists a set $\mathcal{U} \subset \D$ of full measure, such that for every $z \in \mathcal{U}$ the Reeb chords in  $\mathcal{T}(\lambda, \Lambda \to \widehat{\Lambda}^z)$ are all transverse.
\end{lem}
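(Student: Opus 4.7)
The plan is to reformulate transversality of Reeb chords as the regularity of a smooth evaluation map from $\Lambda \times (0,+\infty)$ to $\mathbb{D}$, and then invoke Sard's theorem. Using the identification of $\mathcal{V}$ with $(S^1 \times \mathbb{D}, \ker(dx + \theta dy))$, the Reeb vector field on $\mathcal{V}$ is $R_\lambda = \partial_x$, the contact plane $\xi$ is framed by $\partial_\theta$ and $\partial_y - \theta\partial_x$, and each $\widehat{\Lambda}^z = S^1 \times \{z\}$ is tangent to $\partial_\theta$. After shrinking $\mathcal{V}$ if necessary we may assume $\Lambda \cap \mathcal{V} = \emptyset$, so that Reeb chords from $\Lambda$ to $\bigcup_{z\in\mathbb{D}}\widehat{\Lambda}^z$ are parametrized by the open subset
\begin{equation*}
\Omega := \{(p,t) \in \Lambda \times (0,+\infty) \mid \phi^t_{X_\lambda}(p) \in \mathcal{V}\}
\end{equation*}
via the smooth map
\begin{equation*}
F : \Omega \to \mathbb{D}, \qquad F(p,t) := \mathrm{pr}_{\mathbb{D}}(\phi^t_{X_\lambda}(p)),
\end{equation*}
in the sense that $F^{-1}(z)$ is in canonical bijection with $\mathcal{T}(\lambda, \Lambda \to \widehat{\Lambda}^z)$.

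Next I would match up the two non-degeneracy conditions in the local model. Pick $(p,t) \in \Omega$ and a non-zero $v \in T_p \Lambda$; because $\Lambda$ is Legendrian and the Reeb flow preserves $\xi$, we may write $D\phi^t_{X_\lambda}(v) = \alpha \partial_\theta + \beta(\partial_y - \theta \partial_x)$ at $\phi^t(p)$ for some $\alpha, \beta \in \mathbb{R}$. A direct calculation in the coordinates above gives
\begin{equation*}
DF_{(p,t)}(\partial_t) = \partial_x \quad \text{and} \quad DF_{(p,t)}(v) = \beta(\partial_y - \theta\partial_x),
\end{equation*}
so $DF_{(p,t)}$ is surjective onto $T_z \mathbb{D}$ precisely when $\beta \neq 0$. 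On the other hand, by definition of transversality of a Reeb chord (recalled in Section~\ref{section2}), the chord corresponding to $(p,t)$ is transverse exactly when $D\phi^t_{X_\lambda}(T_p \Lambda)$ and $T_{\phi^t(p)} \widehat{\Lambda}^z = \mathbb{R}\partial_\theta$ are distinct lines in $\xi_{\phi^t(p)}$, which is again the condition $\beta \neq 0$. Hence $(p,t)$ is a regular point of $F$ if and only if the corresponding Reeb chord is transverse.

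Finally, since $F$ is a smooth map between $2$-dimensional manifolds, Sard's theorem produces a full-measure subset $\mathcal{U} \subset \mathbb{D}$ of regular values of $F$. By the equivalence above, for every $z \in \mathcal{U}$ every preimage $(p,t) \in F^{-1}(z)$ corresponds to a transverse Reeb chord in $\mathcal{T}(\lambda,\Lambda \to \widehat{\Lambda}^z)$, which is exactly the conclusion of the lemma. The only genuinely substantive step is the local computation identifying transversality of the chord with surjectivity of $DF$; once that is in hand, Sard's theorem does the rest, and no further compactness or perturbation argument is needed.
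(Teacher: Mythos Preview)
The paper does not prove this lemma; it is quoted verbatim from \cite[Lemma~6]{A2} and no argument is given here, so there is nothing in the present paper to compare against. Your Sard--theorem approach is the standard one and is essentially correct.

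There is one slip in the local computation that you should fix. The parametrized Legendrian tubular neighbourhood is only a \emph{contactomorphism}: it identifies $\xi|_{\mathcal V}$ with $\ker(dx+\theta\,dy)$, but it does \emph{not} identify $\lambda|_{\mathcal V}$ with $dx+\theta\,dy$. In these coordinates one has $\lambda = f\,(dx+\theta\,dy)$ for some positive function $f$, and the Reeb vector field $R_\lambda$ is in general \emph{not} $\partial_x$. Writing $R_\lambda = a\,\partial_\theta + b\,\partial_x + c\,\partial_y$ at $q=\phi_{X_\lambda}^t(p)$, the normalisation $\lambda(R_\lambda)=1$ forces $b+\theta c = 1/f(q) \neq 0$, and one gets
\[
DF_{(p,t)}(\partial_t) = b\,\partial_x + c\,\partial_y, \qquad DF_{(p,t)}(v) = \beta(\partial_y-\theta\,\partial_x).
\]
The determinant of these two vectors in $T_z\mathbb{D}$ is $\beta(b+\theta c)=\beta/f(q)$, which is nonzero precisely when $\beta\neq 0$. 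Hence your equivalence ``$DF_{(p,t)}$ surjective $\Longleftrightarrow$ $\beta\neq 0$ $\Longleftrightarrow$ the chord is transverse'' is correct as stated, and Sard's theorem then yields the full--measure set $\mathcal{U}$ exactly as you claim. With this correction the proof is complete.
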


Before stating the next result we introduce some terminology. We consider the space $\mathrm{Diff}^1(Y)$ of $C^1$-diffeomorphisms of $Y$ a distance function that generates canonical topology on  $\mathrm{Diff}^1(Y)$. When we say that two elements in  $\mathrm{Diff}^1(Y)$ are $\epsilon_0$ close, we mean it with respect to this distance function.
\begin{lem} \label{lem:change_of_position}\cite{A2}
Let $\lambda$ be a contact form on $(Y,\xi)$ such that $L$ is a set of Reeb orbits of $\lambda$, and $\Lambda$ and $\widehat{\Lambda}$ be disjoint connected Legendrian submanifolds in $(Y \setminus L,\xi)$. Let $\mathcal{V} \subset Y \setminus L$ be a parametrized Legendrian tubular neighbourhood of $\widehat{\Lambda}$ that does not intersect $\Lambda$. Then, given $\epsilon_0>0$ there exists $\delta>0$ such that for every $z \in \overline{\D}_\delta$ there exists a contactomorphism $\psi_{\widehat{\Lambda}^z}: (Y,\xi) \to (Y,\xi)$ which satisfies
\begin{itemize}
\item [(1)] $\psi_{\widehat{\Lambda}^z}(\widehat{\Lambda}) = \widehat{\Lambda}^z$,
\item [(2)] $\psi_{\widehat{\Lambda}^z}$ is $\epsilon_0$-close to the identity in the $C^1$-sense,
\item [(3)] $\psi_{\widehat{\Lambda}^z}$ coincides with the identity in the complement of $\mathcal{V}$.
\end{itemize}
\end{lem}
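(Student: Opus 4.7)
The strategy is to construct, for each small $z\in\overline{\mathbb{D}}_\delta$, a compactly supported contact isotopy inside $\mathcal{V}$ whose time-$1$ map sends $\widehat{\Lambda}$ to $\widehat{\Lambda}^z$, and whose contact Hamiltonian has size linear in $|z|$ so that the resulting contactomorphism is $O(|z|)$-close to the identity in $C^1$.

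Using the parametrized Legendrian tubular neighbourhood, work entirely in the model $(\mathcal{V},\ker(dx+\theta\,dy))\cong(S^1\times\mathbb{D},\ker(dx+\theta\,dy))$, with $\widehat{\Lambda}=S^1\times\{0\}$ and $\widehat{\Lambda}^z=S^1\times\{z\}$. For $z\in\overline{\mathbb{D}}_{1/2}$ fix the smooth Legendrian isotopy $L^z_t:=S^1\times\{tz\}$, $t\in[0,1]$, connecting $\widehat{\Lambda}$ to $\widehat{\Lambda}^z$.

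By the Legendrian isotopy extension theorem (see Geiges \cite{GeigesBook}, Theorem 2.6.2), this Legendrian isotopy is realised by the restriction of a compactly supported ambient contact isotopy $\psi^z_t:Y\to Y$, obtained as the flow of a time-dependent contact Hamiltonian $h^z_t:\mathcal{V}\to\mathbb{R}$. Concretely, one first chooses a vector field $V^z_t$ along $L^z_t$ whose normal component gives the prescribed velocity (here constant equal to the vector $z\in\mathbb{R}^2$), computes its contact Hamiltonian $\alpha(V^z_t)$ on $L^z_t$, extends this function to a smooth function on $\mathcal{V}$, and multiplies by a cutoff function supported in a slightly smaller sub-neighbourhood of $\widehat{\Lambda}$ inside $\mathcal{V}$. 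Extend $\psi^z_t$ by the identity to all of $Y$; this automatically gives condition~(3).

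The crucial quantitative observation is the scaling $L^{\lambda z}_t=L^z_{\lambda t}$: the Legendrian isotopy is traced out at normal speed $|z|$. Consequently $h^z_t$ may be chosen to satisfy $\|h^z_t\|_{C^{k}}\le K_k|z|$ for every $k$, with $K_k$ depending only on the fixed cutoff function and on the coordinate chart, but not on $z$. A standard Gronwall estimate for the flow of $X_{h^z_t}$ then yields
\begin{equation}
\mathrm{dist}_{C^1}(\psi^z_1,\mathrm{id})\le K|z|
\end{equation}
for a uniform constant $K$. Given $\epsilon_0>0$, set $\delta:=\epsilon_0/(2K)$ and define $\psi_{\widehat{\Lambda}^z}:=\psi^z_1$; this $\psi_{\widehat{\Lambda}^z}$ fulfils~(1) by the Legendrian isotopy extension property, fulfils~(3) by the compact support of $h^z_t$, and fulfils~(2) by the Gronwall estimate.

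The only genuinely delicate step is the quantitative control in the extension theorem: one must verify that the Hamiltonian and the cutoffs can be chosen so that all $C^k$ norms scale linearly in $|z|$. This is the main obstacle, but it is straightforward once one observes that the construction of the extension is linear in the prescribed normal velocity of the Legendrian isotopy, and the cutoff is chosen independently of $z$.
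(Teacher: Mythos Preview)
The paper does not give an independent proof of this lemma; it simply records that ``this lemma follows trivially from \cite[Lemma~5]{A2}'' and moves on. Your proposal therefore supplies strictly more than the paper does here: you outline the actual mechanism (Legendrian isotopy extension with a contact Hamiltonian whose size scales linearly in $|z|$, followed by a Gronwall estimate), which is precisely the kind of argument one expects to find behind the cited lemma in \cite{A2}. In that sense your approach is not so much \emph{different} from the paper's as it is an unpacking of what the paper delegates to an external reference.

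Your sketch is sound. The only point worth tightening is the one you yourself flag as ``the genuinely delicate step'': in the concrete model $(S^1\times\mathbb{D},\ker(dx+\theta\,dy))$ you should check that the extension of the Hamiltonian off $L^z_t$ and the fixed cutoff can indeed be chosen uniformly in $z\in\overline{\mathbb{D}}_{1/2}$, so that the constants $K_k$ really do not depend on $z$. This is routine once one writes the Hamiltonian explicitly (the infinitesimal generator of the straight-line isotopy has contact Hamiltonian linear in the components of $z$), but it is the place where a careless choice could spoil the uniformity.
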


This lemma is follows trivially from \cite[Lemma 5]{A2}.
We now proceed to prove Proposition \ref{mainproposition}.

\textit{Proof of proposition \ref{mainproposition}:} \\
Let $\lambda$ be a contact form satisfying the assumptions of the proposition. Fix $\epsilon>0$. We choose $\epsilon_0>0$ such that, if  $\psi: (Y,\xi) \to (Y,\xi)$ is a contactomorphism which is $\epsilon_0$-close to the identity in the $C^1$-sense, then we have $$|f_{\psi^* \lambda}-f_\lambda|_{C^0} < \epsilon,$$
where $f_{\psi^* \lambda}$ is the function such that $f_{\psi^* \lambda}\lambda_0 = \psi^* \lambda$. It is clear that such an $\epsilon_0$ exists. 

We then apply lemma \ref{lem:change_of_position} to obtain a $\delta>0$ so that for every $z \in \overline{\D}_\delta$ there exists $\psi_{\widehat{\Lambda}^z}: (Y,\xi) \to (Y,\xi)$ satisfying (1), (2), and (3) as in the lemma with $\epsilon_0$ as in the paragraph above. It follows that if we define $\lambda_z:=\psi_{\widehat{\Lambda}^z}^* \lambda$ then \begin{equation} \label{bottleneck}
    |f_{\lambda_z} - f_\lambda|_{C^0} < \epsilon,
\end{equation} 
where $f_{\lambda_z}$ is the function that satisfies $\lambda_z=f_{\lambda_z}\lambda_0$.

For this choice of $\delta>0$ we consider the set $\mathcal{U}_\delta:= \mathcal{U}\cap \overline{\D}_\delta$, where $\mathcal{U}$ is given by lemma \ref{generic}.

For each $z \in \mathcal{U}_\delta$ we consider the contact form $\lambda_z:=\psi_{\widehat{\Lambda}^z}^* \lambda$. By the definition of $\lambda_z$, and because $\psi_{\widehat{\Lambda}^z}(\Lambda)= \Lambda$ and $\psi_{\widehat{\Lambda}^z}(\widehat{\Lambda}) = \widehat{\Lambda}^z$, it is clear that $\psi_{\widehat{\Lambda}^z}^{-1}$ takes $\lambda$-Reeb chords from $\Lambda$ to $\widehat{\Lambda}$ to $\lambda_z$-Reeb chords from $\Lambda$ to $\widehat{\Lambda}^z$. There is thus a natural action preserving bijection between $\mathcal{T}(\lambda, \Lambda \to \widehat{\Lambda}^z)$ and $\mathcal{T}(\lambda_z, \Lambda \to\widehat{\Lambda})$. In particular, we know that all Reeb chords of $\mathcal{T}(\lambda_z, \Lambda \to\widehat{\Lambda})$ are transverse and 
\begin{equation}
    N^C(\lambda_z,\Lambda \to \widehat \Lambda)= N^C(\lambda,\Lambda \to \widehat \Lambda^z).
\end{equation}
We remark that because $\psi_{\widehat{\Lambda}^z}$ is the identity in the complement of $\mathcal{V}$, it follows that $L$ is also a set of Reeb orbits of $\lambda_z$.

Because all elements of $\mathcal{T}(\lambda_z, \Lambda \to\widehat{\Lambda})$ are transverse, we can perturb $\lambda_z$ to a non-degenerate contact form $\lambda'_z$ such that
\begin{itemize}
    \item $N^C(\lambda_z,\Lambda \to \widehat \Lambda)= N^C(\lambda'_z,\Lambda \to \widehat \Lambda)$,
    \item $L$ is a set of Reeb orbits of $\lambda'_z$,
    \item $|f_{\lambda'z} - f_\lambda|_{C^0}<\epsilon$ where $f_{\lambda'z} $ is the function such that $\lambda'_z=f_{\lambda'z}\lambda_0$.
\end{itemize}
We can then apply lemma \ref{lemma:lowerbound} to $\lambda'_z$, to obtain 
\begin{align} \label{eq:import}
    N^C(\lambda_z,\Lambda \to \widehat \Lambda)= N^C(\lambda'_z,\Lambda \to \widehat \Lambda) \geq {\#\Omega^{\frac{C}{\max f_{\lambda'_z}}}(\lambda_0,\Lambda \to \widehat \Lambda)} \geq \\ \nonumber \geq {\#\Omega^{\frac{C}{\max f_{\lambda} - \epsilon}}(\lambda_0,\Lambda \to \widehat \Lambda)}.
\end{align}
The proposition now follows from combining \eqref{eq:import} with the fact fact that $\limsup_{C \to \infty } \frac{\log \#\Omega^{\frac{C}{\max f_{\lambda} - \epsilon}}(\lambda_0,\Lambda \to \widehat \Lambda)}{C}= \frac{a}{\max f_{\lambda} - \epsilon}$. \qed

\

We can now finish the proof of theorem \ref{theorem:growthleg}.

\textit{Proof of Theorem \ref{theorem:growthleg}:} Using Proposition \ref{mainproposition} the proof is identical to the one of \cite[Theorem 1]{A2}. More precisely, one shows that $$\limsup_{t \to +\infty} \frac{\log ({\rm length}(\phi_\lambda^t(\Lambda)))}{t}\geq \frac{a}{\max f_\lambda}.$$
Applying Yomdin's theorem we conclude that $h_{\rm top}(\phi_\lambda)\geq \frac{a}{\max f_\lambda}$.
\qed

\section{Homotopical growth $CH_{{L}}$ and positivity of $h_{\rm top}$} \label{section:growth-entropy}

\subsection{Recollections of the cylindrical contact homology on the complement of a transverse link}

In this section we recall some results about the cylindrical contact homology in the complement of a transverse link. Our references for this section are \cite{HMS,Momin}. 
We begin with a definition.
\begin{defn}
Let $(Y,\xi)$ be a contact $3$-manifold, $L$ be a transverse link in $(Y,\xi)$ and $\lambda_0$ be a contact form on $(Y,\xi)$. Assume that $L$ is a collection of Reeb orbits of $\lambda_0$.
We say that $\lambda_0$ is \textit{hypertight in the complement of $L$} if
\begin{itemize}
    \item any disk in $Y$ whose boundary is a component of $L$ must have an interior intersection point with $L$,
    \item every Reeb orbit of $\lambda_0$ is non-contractible in $Y\setminus L$.
\end{itemize}
\end{defn}

The following definition is taken from \cite[Definition 1.3]{Momin}.
\begin{defn}
Let $(Y,\xi)$ be a contact $3$-manifold, $L$ be a transverse link in $(Y,\xi)$, $\lambda_0$ be a contact form on $(Y,\xi)$ and $\rho$ be a non-trivial free homotopy class of loops in $Y \setminus L$. Assume that $L$ is a collection of Reeb orbits of $\lambda_0$ and that $\lambda_0$ is hypertight in the complement of $L$. We say that $(\lambda_0,L,\rho)$ satisfy the ``proper link class'' condition (PLC) if 
\begin{itemize}
    \item for any connected component $x$ of $L$, no Reeb orbit $\gamma$ in $\rho$ can be homotoped to $x$  in $Y\setminus L$, i.e. there  is  no  homotopy $I: [0,1]\times S^1 \to Y$  with $I(0, \cdot)=\gamma$ and $I(1,\cdot)=x$ such that $I([0,1) \times S^1)\subset Y\setminus L$.
    \item every Reeb orbit of $\lambda_0$ belonging to the class $\rho$ is non-degenerate and simply covered.
\end{itemize}

\end{defn}

Momin showed that if $(\lambda_0,L,\rho)$ satisfies the PLC condition then one can define the cylindrical contact homology $\CH_{\rho | L}(\lambda_0)$ of $\lambda_0$ on the complement of $L$ for Reeb orbits on $\rho$. 

We briefly recall the construction of $\CH_{\rho | L}(\lambda_0)$. The cylindrical contact  complex $\CC_{\rho | L}(\lambda_0)$ is the $\Z_2$-vector space generated by the Reeb orbits of $\lambda_0$ that belong $\rho$. There is a $\Z_2$-grading on $\CC_{\rho | L}(\lambda_0)$. If $\gamma \in \rho$ is a Reeb orbit of $\lambda_0$ the degree  $|\gamma|$ of $\gamma$ is given by the parity of the Conley-Zehnder index with respect to any trivialization of $\xi$ over $\gamma$: the degree is then extended algebraically to all $\CC_{\rho | L}(\lambda_0)$. The differential $d$ in $\CC_{\rho | L}(\lambda_0)$ counts certain finite energy pseudoholomorphic cylinders in the symplectization of $\lambda_0$, and we obtain that $(\CC_{\rho | L}(\lambda_0),d)$ is a $\Z_2$-graded chain complex. The resulting homology is the $\Z_2$-graded vector space $\CH_{\rho | L}(\lambda_0)$. For the details of the construction we refer the reader to \cite[Section 4]{Momin} and \cite[Section 3]{HMS}.

We will use the following result which is a consequence of \cite[Theorem 1.4]{Momin}.

\begin{thm} \label{theorem-momin}
Let $(Y,\xi)$ be a contact $3$-manifold, $L$ be a transverse link in $(Y,\xi)$, $\lambda_0$ be a contact form on $(Y,\xi)$ and $\rho$ be a non-trivial free homotopy class of loops in $Y \setminus L$. Assume that $L$ is a collection of Reeb orbits of $\lambda_0$, and that $(\lambda_0, L,\rho)$ satisfy the PLC condition. Assume that all Reeb orbits of $\lambda_0$ in $\rho$ have action smaller than $T>0$ and that $\CH_{\rho | L}(\lambda_0)\neq 0$. Then every contact form $\lambda$ on $(Y,\xi)$ which has $L$ as a collection of Reeb orbits, posseses a Reeb orbit $\gamma_\rho$ in the homotopy class $\rho$ that satisfies
\begin{equation}
    \mathcal{A}(\gamma_\rho) \leq \max(f_{\lambda})T,
\end{equation}
where $f_{\lambda}$ is the positive function such that $f_\lambda\lambda_0 = \lambda$. 
\end{thm}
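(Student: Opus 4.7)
The plan is to mirror the strategy of Lemma \ref{lemma:lowerbound}, adapting the arguments from the strip Legendrian setting to the cylindrical contact homology setting. As there, the first reduction is to the case where the conformal factor satisfies $0<f_\lambda<1$, by rescaling $\lambda$ (which only multiplies actions by a positive constant). Fix a small constant $0<c<\min f_\lambda$ and a cutoff $C>T$ strictly larger than the actions of all $\lambda_0$-Reeb orbits in class $\rho$. The idea is then to interpolate between two exact symplectic cobordisms from $\lambda_0$ to $c\lambda_0$: a ``trivial'' one $(\R\times Y, d\varsigma_1)$ obtained from the reparametrization $h_1$, which up to the obvious rescaling realizes the identity on cylindrical contact homology at action level $\leq C$, and a ``$\lambda$-shaped'' one $(\R\times Y, d\varsigma_R)$ which on a long neck $[-R,R]\times Y$ coincides with the symplectization of $\lambda$ and transitions to $\lambda_0$, $c\lambda_0$ outside it.

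Using the cylindrical analogue of Proposition \ref{propchainhom} (which holds by Momin's construction, since the PLC condition rules out the same kind of contractible bubbles that are excluded by conditions (a)-(d) in the strip setting), the chain maps induced by $\bar J_1$ and $\bar J_R$ are chain homotopic and therefore agree on homology. Since $\CH_{\rho|L}(\lambda_0)\neq 0$ and all generators have action $\leq T<C$, the induced map from $\bar J_1$ is an isomorphism at the level of $\CH_{\rho|L}$ filtered by action $C$, hence the map induced by $\bar J_R$ is nonzero. Consequently, for each $R>0$ there must exist Reeb orbits $\gamma_+^R\in\P(\lambda_0)$, $\gamma_-^R\in\P(c\lambda_0)$ in class $\rho$ with $\mathcal{A}(\gamma_\pm^R)\leq T$ and a non-empty moduli space of pseudoholomorphic cylinders in $\M^{\rho,C}(\bar J_R,\gamma_+^R,\gamma_-^R)$ for some Fredholm index.

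Now take $R_n\to+\infty$ and pick $\widetilde w_n\in \M^{\rho,C}(\bar J_{R_n},\gamma_+^n,\gamma_-^n)$; because there are only finitely many candidate asymptotic orbits below the action threshold, pass to a subsequence where $\gamma_\pm^n$ are constant. The energy is uniformly bounded in terms of $T$, so the SFT compactness theorem yields a limit pseudoholomorphic building $\widetilde w$ whose levels live in the symplectizations of $\lambda_0$, $\lambda$, and $c\lambda_0$ and in the two asymptotic cobordisms (which degenerate out as $R_n\to\infty$). Positivity of intersections prevents any level from crossing $\R\times L$, and the PLC condition rules out any bubbling-off of planes: an asymptotic limit in class $\rho$ cannot be homotoped to a component of $L$, and every Reeb orbit of $\lambda_0$ in $Y\setminus L$ is non-contractible there. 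This forces $\widetilde w$ to be a chain of cylinders, each level of which has a ``special'' cylinder with positive and negative asymptotics in class $\rho$. In particular the middle level in the symplectization of $\lambda$ contains at least one cylinder whose positive asymptotic is a Reeb orbit $\gamma_\rho$ of $\lambda$ in the class $\rho$.

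It remains to estimate $\mathcal{A}(\gamma_\rho)$. Since $\lambda=f_\lambda\lambda_0$, the $\lambda$-period of $\gamma_\rho$ equals $\int_{\gamma_\rho}\lambda=\int_{\gamma_\rho} f_\lambda\,\lambda_0\leq (\max f_\lambda)\cdot \mathcal{A}_{\lambda_0}(\gamma_\rho)$, where $\mathcal{A}_{\lambda_0}(\gamma_\rho)=\int_{\gamma_\rho}\lambda_0$. By Stokes applied along the special cylinder in the middle level, $\mathcal{A}_{\lambda_0}(\gamma_\rho)$ is bounded above by the $\lambda_0$-action of the positive asymptote of the level above, and iterating upward we get $\mathcal{A}_{\lambda_0}(\gamma_\rho)\leq \mathcal{A}_{\lambda_0}(\gamma_+^n)\leq T$. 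Thus $\mathcal{A}(\gamma_\rho)\leq(\max f_\lambda)T$, as required. The main obstacle is the control of the bubbling in the SFT limit: ensuring that the ``special'' subcurve with class $\rho$ asymptotics persists at every level and lands in the $\lambda$-symplectization. This is exactly where the PLC hypothesis is essential, since it rules out the degenerations that would otherwise dissipate the class-$\rho$ asymptotic into the link $L$ or into contractible orbits.
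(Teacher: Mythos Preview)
The paper does not give its own proof of this theorem; it is quoted as a consequence of \cite[Theorem 1.4]{Momin}. Your idea of transporting the neck-stretching argument of Lemma~\ref{lemma:lowerbound} to the cylindrical setting is the natural route, and the overall architecture is right. Two steps, however, are not justified as written.

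First, the assertion that ``the PLC condition rules out any bubbling-off of planes'' so that the limit is ``a chain of cylinders'' is incorrect. Hypertightness in the complement of $L$ controls $\lambda_0$-orbits only; nothing prevents a finite-energy plane from appearing in the $\lambda$-symplectization level, asymptotic to a $\lambda$-Reeb orbit which is contractible in $Y\setminus L$. What does survive is the ``main path'' argument spelled out in the proof of Lemma~\ref{lemma:lowerbound}: in the limit tree one follows the unique chain of components connecting the puncture at $\gamma_+$ to that at $\gamma_-$; every side branch is a capped tree, so its attaching asymptote is contractible in $Y\setminus L$; hence, by the pair-of-pants relation, every asymptote along the main chain stays in class $\rho$. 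This yields the desired $\gamma_\rho$ in the $\lambda$-region even when bubbling occurs.

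Second, the action estimate is flawed. Stokes' theorem in an exact cobordism compares the actions with respect to the two end contact forms, so tracing through the building gives $\mathcal{A}_\lambda(\gamma_\rho)\leq\mathcal{A}_{\lambda_0}(\gamma_+)\leq T$ after your rescaling to $0<f_\lambda<1$, \emph{not} $\int_{\gamma_\rho}\lambda_0\leq T$. In the cobordism level and in the $J_\lambda$-levels the $2$-form $u^*d\lambda_0$ has no reason to be non-negative (indeed $d\lambda_0(X_\lambda,\cdot)|_\xi=f_\lambda^{-2}\,df_\lambda|_\xi$ does not vanish), so your ``$\lambda_0$-Stokes'' step does not go through. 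Undoing the rescaling thus only yields $\mathcal{A}_\lambda(\gamma_\rho)\leq kT$ for every $k>\max f_\lambda$. To reach the stated bound $\max f_\lambda\cdot T$ you must let $k\searrow\max f_\lambda$ and extract a limit orbit via Arzel\`a--Ascoli; this works because the periods are bounded above by $kT$ and below away from zero since $\rho$ is non-trivial.
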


We need the following Lemma which is a simple consequence of Momin's definition of the cylindrical contact homology in the complement of a transverse link.

\begin{lem}
Let $(Y,\xi)$ be a contact $3$-manifold, $L$ be a transverse link in $(Y,\xi)$, $\lambda_0$ be a contact form on $(Y,\xi)$ and $\rho$ be a non-trivial free homotopy class of loops in $Y \setminus L$. Assume that $L$ is a collection of Reeb orbits of $\lambda_0$, and that $(\lambda_0, L,\rho)$ satisfy the PLC condition. Assume that there exists a finite and positive number of Reeb orbits of $\lambda_0$ that belong to the class $\rho$ and that Conley-Zehnder indexes of these orbits have the same parity. Then $\CH_{\rho | L}(\lambda_0)\neq 0$.
\end{lem}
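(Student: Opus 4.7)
The plan is to show that the differential on $\CC_{\rho|L}(\lambda_0)$ vanishes identically under the stated hypothesis, so that the homology coincides with the chain complex, which is non-zero and finite-dimensional by assumption.

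First I would recall the construction of the differential $d$ on $\CC_{\rho|L}(\lambda_0)$: for a generic $J \in \mathcal{J}(\lambda_0)$, the coefficient of $\gamma_-$ in $d(\gamma_+)$ counts (mod $2$) elements of the moduli space $\widetilde{\M}(J,\gamma_+,\gamma_-)$ of finite-energy $J$-holomorphic cylinders in the symplectization of $\lambda_0$ that avoid $\R \times L$, projected onto asymptotics in class $\rho$, and whose Fredholm index equals $1$. The PLC hypothesis ensures that the compactness/regularity machinery of Momin applies and that the differential is well-defined, with no contribution from orbits outside the class $\rho$ (since such orbits either cannot be limits of the relevant cylinders, or are ruled out by the link-class condition).

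Next I would invoke the Fredholm index formula recalled in Section~\ref{section:index}: for a finite-energy cylinder $\widetilde{u}$ between non-degenerate periodic Reeb orbits $\gamma_\pm$,
\begin{equation}
I(\widetilde{u}) = \mu_{\cz}^{\tau}(\gamma_+) - \mu_{\cz}^{\tau}(\gamma_-),
\end{equation}
where $\tau$ is any trivialization of $u^*\xi$ extending to trivializations over $\gamma_\pm$. In particular, the parity of $I(\widetilde{u})$ equals the difference of the $\Z_2$-degrees $|\gamma_+| - |\gamma_-|$, independently of $\tau$. Hence if $\widetilde{\M}(J,\gamma_+,\gamma_-)$ contains an element with $I(\widetilde{u}) = 1$, then $|\gamma_+|$ and $|\gamma_-|$ must have \emph{opposite} parity.

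The hypothesis tells us that every Reeb orbit of $\lambda_0$ in the class $\rho$ has Conley–Zehnder index of the same parity, so $|\gamma_+| = |\gamma_-|$ for any two generators of $\CC_{\rho|L}(\lambda_0)$. Consequently the moduli spaces contributing to the differential are empty, and $d \equiv 0$. Therefore
\begin{equation}
\CH_{\rho|L}(\lambda_0) \;=\; \CC_{\rho|L}(\lambda_0),
\end{equation}
which is a non-zero (finite-dimensional) $\Z_2$-vector space because by assumption there exists at least one, and only finitely many, Reeb orbits of $\lambda_0$ in the class $\rho$. No genuine obstacle arises here: the only point requiring care is the purely formal check that the Fredholm-index/parity relation is well-defined in the setting of Momin's theory (i.e.\ that it is independent of the trivialization chosen and survives the generic perturbations used to define $d$), which follows from the standard discussion recalled in Section~\ref{section:index}.
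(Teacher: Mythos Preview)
Your proof is correct and follows the same approach as the paper: you show that the parity hypothesis forces the differential on $\CC_{\rho|L}(\lambda_0)$ to vanish, so that $\CH_{\rho|L}(\lambda_0)\cong\CC_{\rho|L}(\lambda_0)\neq 0$. The paper's proof is simply a two-line version of yours, invoking the $\Z_2$-grading directly without spelling out the Fredholm-index computation.
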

\textbf{Proof:} Because all the generators of  $\CC_{\rho | L}(\lambda_0)$ have the same parity the differential of $d$ of $\CC_{\rho | L}(\lambda_0)$ vanishes. This implies that $\CH_{\rho | L}(\lambda_0) \cong \CC_{\rho | L}(\lambda_0)$, and under the assumptions of the lemma $\CC_{\rho | L}(\lambda_0) \neq 0$. \qed

\subsection{Homotopical growth $CH_{{L}}$ and positivity of $h_{\rm top}$} \label{section:homgrowthCH}

Let $(Y,\xi)$ be a contact $3$-manifold, $L$ be a transverse link in $(Y,\xi)$, $\lambda_0$ be a contact form on $(Y,\xi)$ such that $L$ is a collection of periodic orbits of $\lambda_0$.
We assume that $\lambda_0$ is hypertight in the complement of $L$.

Let $\Omega(Y\setminus {L})$ be the set of free homotopy classes of loops in $Y\setminus L$.
For every positive real number $T$ we define the set $\Omega^{T}_{L}(\lambda_{0})\subset \Omega(Y\setminus {L})$ such that $\rho \in \Omega^{T}_{L}(\lambda_{0})$  if $(\lambda_0,L,\rho)$ satisfies the PLC condition, every Reeb orbit of $\lambda_0$ in $\rho$ has action smaller than $T$ and $CH_{{L}}^{\rho}(\lambda_{0})\neq 0.$

\begin{defn}
The exponential homotopical growth rate of $CH_L(\lambda_0)$ is defined as the number $\limsup_{T \to +\infty} \frac{\log \# \Omega^{T}_{L}(\lambda_{0}) }{T}$. When this number is $>0$ then we say that $CH_L(\lambda_0)$ has exponential homotopical growth.
\end{defn}

We recall the following result from \cite{Alves-Cylindrical}which is necessary for us.
\begin{thm}\cite[Theorem 1]{Alves-Cylindrical} \label{t:homotopicalentropy} 
Let $\phi$ be a flow on a compact manifold $Y$ possibly with boundary. Then $$h_{top}(\phi) \geq \limsup_{T \to +\infty}\frac{\log \# N^T(\phi)}{T},$$
where $N^T(\phi)$ is the set of free homotopy classes in $Y$ that contain periodic orbits of $\phi$ with length $\leq T$.
\end{thm}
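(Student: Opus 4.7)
My plan is to show that for a small enough fixed $\delta>0$, the exponential growth of $\#N^T(\phi)$ forces a matching exponential growth of $(T,\delta)$-separated sets, yielding $h_\delta(\phi)\geq \limsup_{T\to+\infty}T^{-1}\log\#N^T(\phi)$; the conclusion then follows by letting $\delta\to 0$ in the definition of $h_{top}$.

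Equip $Y$ with an auxiliary Riemannian metric $g$, lifted to a $\pi_1(Y)$-invariant metric $\widetilde g$ on the universal cover $\pi\colon\widetilde Y\to Y$. Compactness of $Y$ together with proper discontinuity of the deck action furnishes $\delta_0>0$ such that every non-identity $\alpha\in\pi_1(Y)$ displaces every $\widetilde q\in\widetilde Y$ by more than $4\delta_0$. Fix $\delta\in(0,\delta_0)$ smaller than half the injectivity radius of $g$. For each $\rho\in N^T(\phi)$, choose a representative periodic orbit $\gamma_\rho$ of period $T_\rho\le T$ and a basepoint $p_\rho\in\gamma_\rho$, and form $E_T=\{p_\rho:\rho\in N^T(\phi)\}$.

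The key claim is that $E_T$ is $(T,\delta)$-separated for $\phi$. Suppose instead that $p_{\rho_1},p_{\rho_2}\in E_T$ with $\rho_1\ne\rho_2$ satisfy $d(\phi^t(p_{\rho_1}),\phi^t(p_{\rho_2}))<\delta$ for every $t\in[0,T]$. Since $\delta$ lies below half the injectivity radius, we may choose lifts $\widetilde p_i$ of $p_{\rho_i}$ with $d_{\widetilde g}(\widetilde p_1,\widetilde p_2)<\delta$, and the lifted trajectories remain $\delta$-close on $[0,T]$. Writing $\widetilde\phi^{T_{\rho_i}}(\widetilde p_i)=\alpha_i\widetilde p_i$ with $\alpha_i\in\pi_1(Y)$, the conjugacy class of $\alpha_i$ represents $\rho_i$. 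In the equal-period case $T_{\rho_1}=T_{\rho_2}=T^*$, evaluation at $t=T^*$ yields $d_{\widetilde g}(\alpha_1\widetilde p_1,\alpha_2\widetilde p_2)<\delta$, and the isometric action of $\alpha_1^{-1}$ together with the triangle inequality gives $d_{\widetilde g}(\alpha_1^{-1}\alpha_2\widetilde p_2,\widetilde p_2)<2\delta$; the choice of $\delta_0$ then forces $\alpha_1=\alpha_2$, hence $\rho_1=\rho_2$, a contradiction. In the general unequal-period case, the same isometric argument applied at times $kT_{\rho_1}$ with $k\ge 1$ and $kT_{\rho_1}\le T$ produces $d_{\widetilde g}(\widetilde\phi^{kT_{\rho_1}}(\widetilde p_2),\alpha_1^k\widetilde p_2)<2\delta$, and symmetrically on the $\rho_2$-side; a pigeonhole among the lattice points $(k_1T_{\rho_1},k_2T_{\rho_2})\in[0,T]^2$ combined with the discreteness estimate then forces an exact identity $\alpha_1^{k_1}=\alpha_2^{k_2}$, from which conjugacy of $\alpha_1$ and $\alpha_2$ in $\pi_1(Y)$ is extracted, again contradicting $\rho_1\ne\rho_2$.

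Once the claim is established, $n^{T,\delta}_X\geq\#N^T(\phi)$, so $h_\delta(\phi)\geq\limsup_{T\to\infty}T^{-1}\log\#N^T(\phi)$, and the theorem follows. The main obstacle is the unequal-period case: one must convert an approximate coincidence of monodromy iterates at discrete times into an exact conjugacy in $\pi_1(Y)$, and it is here that the proof requires the most care.
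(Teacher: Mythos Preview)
Your approach has a genuine gap in the unequal-period case, and the strategy of showing that the full set $E_T$ is $(T,\delta)$-separated is likely unsalvageable as stated.

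The specific error is in the final step: from $\alpha_1^{k_1}=\alpha_2^{k_2}$ one cannot extract conjugacy of $\alpha_1$ and $\alpha_2$ in $\pi_1(Y)$. Take $\pi_1(Y)=\Z^2$ with $\alpha_1=(1,0)$ and $\alpha_2=(2,0)$; then $\alpha_1^2=\alpha_2^1$, yet these are distinct conjugacy classes. Concretely, on $T^2$ with the linear flow $\dot\theta_1=1$, $\dot\theta_2=0$, the free homotopy classes $\rho_1=(1,0)$ and $\rho_2=(2,0)$ are both represented by the \emph{same} prime periodic orbit, traversed once and twice respectively. If you happen to choose the same basepoint $p_{\rho_1}=p_{\rho_2}$, the two points are never separated by the flow, so $E_T$ is not $(T,\delta)$-separated for any $\delta$. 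While in this toy example $\#N^T$ grows only linearly and the theorem is vacuous, the example shows that your intermediate claim is false in general; the same phenomenon---distinct free homotopy classes realized by iterates of a common prime orbit, or by orbits shadowing one another at incommensurable speeds---will obstruct the argument even when the growth is exponential.

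The paper's proof avoids this difficulty by not attempting to separate all of $E_T$ at once. Instead it uses the elementary lemma: there exist $\epsilon_0>0$ (a quarter of the injectivity radius) and $\eta>0$ such that any two periodic orbits whose periods differ by at most $\eta$ and which stay $\epsilon_0$-close on $[0,\max(T_1,T_2)]$ must be freely homotopic. Given this, if $\bar a:=\limsup_T T^{-1}\log\#N^T(\phi)>0$, then for any $\varepsilon>0$ a pigeonhole over period-windows of width $\eta$ yields a sequence $T_n\to\infty$ with at least $e^{(\bar a-\varepsilon)T_n}$ distinct free homotopy classes represented by orbits of period in the narrow window $[T_n-\eta/2,\,T_n+\eta/2]$. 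Basepoints on these orbits are then automatically $(T_n,\epsilon_0)$-separated by the lemma, since their periods lie within $\eta$ of each other. This restriction to nearly equal periods is exactly what makes the argument go through, and it bypasses the unequal-period case entirely.
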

For completeness we sketch a proof of this result which is simpler than the one from \cite{Alves-Cylindrical} and which was suggested to us by Alberto Abbondandolo.

\textit{ Sketch of the proof of theorem \ref{t:homotopicalentropy}.}
We assume that $ \limsup_{T \to +\infty}\frac{\log \# N^T(\phi)}{T}>0$, since the theorem is trivial in the remaining case.

We first endow $Y$ with a Riemannian metric $g$ whose distance function we denote by $d_g$. Recall that $$h_{top}(\phi)= \lim_{\epsilon \to 0} \limsup_{T \to +\infty} \frac{\log S^\epsilon_T}{T}$$ where $S^\epsilon_T$ is the maximum number of $\epsilon,T$-separated orbits. 

The following lemma is elementary and we leave its proof as an exercise to the reader.
\begin{lem*}
 Let $\epsilon_0 := \frac{\epsilon_{\rm inj}}{4}$ where $\epsilon_{\rm inj}$ denotes the injective radius of $g$. Then, there exists $\delta>0$ such that if $\gamma_1$ and $\gamma_2$ are periodic orbits with periods $T_1,T_2$ satisfying $|T_1 - T_2|\leq \delta$  and $$\sup_{0\leq t \leq \max\{T_1,T_2\}} d_g(\gamma_1(t),\gamma_2(t)) < \epsilon_0, $$ 
 then $\gamma_1$ and $\gamma_2$ are in the same free homotopy class. 
\end{lem*}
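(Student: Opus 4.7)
The plan is to construct an explicit free homotopy between $\gamma_1$ and $\gamma_2$ by joining corresponding points with short minimizing geodesics. Without loss of generality assume $T_1\leq T_2$, and reparametrize both loops to the unit interval via $\tilde{\gamma}_i(s):=\gamma_i(T_i s)$ for $s\in[0,1]$. The idea is that if $d_g(\tilde\gamma_1(s),\tilde\gamma_2(s))<\epsilon_{\rm inj}$ holds for every $s\in[0,1]$, then there is a unique minimizing geodesic $c_s$ from $\tilde\gamma_1(s)$ to $\tilde\gamma_2(s)$, it depends continuously on $s$, and $H(s,u):=c_s(u)$ will produce the desired homotopy of loops. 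The work therefore reduces to (i) securing the distance bound for all $s$, and (ii) checking that $H$ closes up at $s=0$ and $s=1$.

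For (i) I would insert the auxiliary point $\gamma_2(T_1 s)$ and apply the triangle inequality. The hypothesis on the orbits gives $d_g(\gamma_1(T_1 s),\gamma_2(T_1 s))<\epsilon_0$ directly, since $T_1 s\in[0,T_1]\subset[0,\max\{T_1,T_2\}]$. For the second piece I would use that the vector field $X$ generating $\phi$ is continuous on the compact manifold $Y$, hence $M:=\sup_{y\in Y}\|X(y)\|_g<\infty$; integrating along the trajectory $\gamma_2$ yields $d_g(\gamma_2(T_1 s),\gamma_2(T_2 s))\leq M(T_2-T_1)s\leq M\delta$. Choosing $\delta$ so that $M\delta<\epsilon_0$ (for instance $\delta:=\epsilon_0/(M+1)$) one obtains $d_g(\tilde\gamma_1(s),\tilde\gamma_2(s))<2\epsilon_0=\epsilon_{\rm inj}/2$, comfortably inside the injectivity radius. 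This calibration of $\delta$ to absorb the period mismatch is the only delicate point in the argument.

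For (ii), the geodesic $c_s(u)=\exp_{\tilde\gamma_1(s)}\bigl(u\,\exp_{\tilde\gamma_1(s)}^{-1}(\tilde\gamma_2(s))\bigr)$ varies smoothly in $s$ thanks to the smoothness of $\exp^{-1}$ inside the injectivity radius. At $s=0$ and $s=1$ the endpoints of $c_s$ agree, since $\tilde\gamma_i(0)=\tilde\gamma_i(1)=\gamma_i(0)$; the two resulting minimizing geodesics $c_0$ and $c_1$ connect the same ordered pair of points and both have length below $\epsilon_{\rm inj}$, so by the uniqueness of short minimizing geodesics they coincide. Thus $H$ factors through $S^1\times[0,1]$ and provides a free homotopy from $\tilde\gamma_1$ to $\tilde\gamma_2$, proving that $\gamma_1$ and $\gamma_2$ lie in the same free homotopy class.
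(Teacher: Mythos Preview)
Your argument is correct: the geodesic-interpolation homotopy, with $\delta$ calibrated against the uniform speed bound $M=\sup_Y\|X\|_g$ so that the reparametrized loops stay within the injectivity radius, is exactly the standard way to prove this, and your verification that $c_0=c_1$ by uniqueness of short geodesics is the right way to close up the homotopy. The paper itself does not give a proof---it declares the lemma ``elementary'' and leaves it as an exercise---so there is nothing to compare against; your write-up would serve well as the omitted argument.
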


Take $\delta>0$ as in the lemma. We denote $\overline{a}:=  \limsup_{T \to +\infty}\frac{\log \# N^T(\phi)}{T}$, and recall that we are assuming $\overline{a}>0$. From the definition of $\overline{a}$ and the fact that it is positive, it is easy to see that given 
 $\varepsilon >0$, there exists a sequence $T_n \to +\infty$ such that $$  P^{T_n + \frac{\delta}{2}}_{T_n - \frac{\delta}{2}}(\phi) \geq e^{(\overline{a}-\varepsilon)T_n}, $$ 
where $ P^{T_n + \frac{\delta}{2}}_{T_n - \frac{\delta}{2}}(\phi)$ is the number of free homotopy classes in $Y$ that contain a periodic orbits of $\phi$ with period in $[T_n - \frac{\delta}{2},T_n + \frac{\delta}{2}]$. This observation combined with the previous lemma finishes the proof of theorem \ref{t:homotopicalentropy}. \qed

We now prove theorem \ref{theorem:growth-entropy} which gives a condition for a transverse link in a contact $3$-manifold to force topological entropy.
%\begin{thm}\label{theorem:growth-entropy1}Let $(M,\xi)$ be a contact $3$-manifold, $L$ be a transverse link in $(M,\xi)$, $\lambda_0$ be a contact form on $(M,\xi)$ such that $L$ is a collection of periodic orbits of $\lambda_0$.We assume that $\lambda_0$ is hypertight in the complement of $L$ and that the cylindrical contact homology of $\lambda_0$ in the complement of $L$  has exponential homotopicalgrowth rate $a>0$. Then, if $\lambda$ is a contact form on $(M,\xi)$ which has ${L}$ as a set of Reeb orbits the topological entropy of the Reeb flow $\phi_\lambda$  is positive. Moreover$$ h_{\rm top}(\phi_{\lambda})\geq \frac{a}{\max  f_{\lambda}}, $$where $f_{\lambda}$ is the function such that $\lambda=f_{\lambda}\lambda_{0}$.\end{thm}

\subsection{ Proof of Theorem \ref{theorem:growth-entropy}.}
Let $\lambda$ be a contact form on $(Y,\xi)$ which has ${L}$ as a set of Reeb orbits.
Applying Theorem \ref{theorem-momin} we obtain that if $\rho\in \Omega_{T}(\lambda_{0}),$ then there exists a Reeb orbit $\gamma_\rho$ of $\lambda$ in $\rho$ with period less than $\frac{T}{\max f_{\lambda}}$.

Defining
$$N_{T}(\phi_{\lambda})=\#\{\rho\in \Omega(Y\setminus {L}) \ \ | \ \ \phi_{\lambda} \ \  \mbox{has a Reeb orbit with action} \leq T \ \ \mbox{in}\ \  \rho\}$$
we conclude from our reasoning in the first paragraph that 
$$N_{T}(\phi_{\lambda}) \geq e^{\frac{a}{{\rm max}f_{\lambda}}T+b}.$$

We now apply the blow-up construction explained in Section \ref{section:blowup}, and blow up $Y$ along the link $L$ to obtain a manifold $Y_L$ and a flow $\widehat{\phi}_{\lambda}$ on $Y_L$.

Define  
$$\widehat{N}_{T}(\phi_{\lambda})=\#\{\widehat{\rho}\in \Omega(Y_L\setminus \partial Y_L) \ \ | \ \ \widehat{\phi}_{\lambda} \ \  \mbox{has a Reeb orbit with action} \leq T \ \ \mbox{in}\ \  \widehat{\rho}\}.$$
We observe that the restriction $\Pi:Y_L\setminus \partial Y_L \to Y\setminus L$ of the projection $\Pi: Y_L \to Y$ is a diffeomorphism and a conjugacy between the restriction of $\phi_\lambda$ to $Y\setminus L$ and the restriction of $\widehat{\phi}_\lambda$ to $Y_P\setminus \partial Y_P$. From this it follows that
$$\widehat{N}_{T}(\phi_{\lambda})\geq N_{T}(X_{\lambda}) \geq e^{\frac{a}{{\rm max}f_{\lambda}}T+b}.$$
By theorem \ref{t:homotopicalentropy} we obtain that
$$h_{\rm top}(\widehat{\phi}_{\lambda}) \geq \limsup_{T\to +\infty} \frac{\log \widehat{N}_{T}(X_{\lambda})}{T} \geq \frac{a}{\max f_\lambda}. $$

By Theorem \ref{theorem-Bowen} we obtain that
$$h_{\rm top}(\phi_{\lambda}) = h_{\rm top}(\widehat{\phi}_{\lambda}) \geq \frac{a}{\max f_\lambda},$$ 
which concludes the proof of the theorem. \qed

\section{Examples of transverse links that force $h_{\rm top}$}\label{section:existence}

In this section we show that every contact $3$-manifold admits a transverse knot that forces entropy.

\subsection{Recollections on open books}
\label{ss:recollections}
In this paragraph we collect results on open books needed in our proof.
For more information and details we refer to \cite[Section 4.4]{GeigesBook}.

Let $Y$ be a closed connected orientable 3-manifold.
An {\it open book}\footnote{In the literature this is usually called an abstract open book decompostion.} for $Y$ is a triple pair $(\Sigma,\psi,\Psi)$,
where $\Sigma$ is a compact oriented surface with non-empty boundary $\partial \Sigma$
and where $\psi$ is a diffeomorphism of~$\Sigma$ that is the identity near the boundary, 
such that there is a diffeomorphism $\Psi$ of $Y$ and 
$$ 
Y(\psi):=\Sigma(\psi) \cup_{\id} \left(\partial \Sigma \times \overline \D \right) .
$$
Here, $\Sigma(\psi)$ denotes the mapping torus
$$
\Sigma(\psi) \,=\, \left( [0,2\pi] \times \Sigma \right) / \sim 
$$
where $(0,\psi(x)) \sim (2\pi,x)$ for each $x \in \Sigma$,
and $\overline \D$ is the closed unit disc. Viewing $S^1$ as the interval $[0,2\pi]$ with its endpoints identified, we write $\partial (\Sigma(\psi))$ as $\partial \Sigma \times S^1$.
The manifold~$Y$ is thus presented as the union of the mapping torus~$\Sigma(\psi)$
and finitely many solid tori, one for each boundary component of~$\Sigma$,
glued along their boundaries by the identity map
$$
\partial (\Sigma(\psi)) \,=\, \partial \Sigma \times S^1 \stackrel{\id\,}{\longrightarrow} 
\partial (\partial \Sigma \times \overline \D) .
$$
We remark that the diffeomorphism $$\Psi: Y(\psi) \to Y$$ is part of the definition of the open book decomposition. 
If $\psi':\Sigma \to \Sigma$ is a diffeomorphism that is isotopic to $\psi$ via an isotopy that fixes each point of $\partial \Sigma$ then $Y(\psi')$ is diffeomorphic to $Y(\psi)$. The map $\psi$ is called the monodromy map of the open book decomposition.

The page $\dot{\Sigma}_t$ of the open book decomposition is the image by the diffeomorphism $\Psi$ of the union of $\{t\} \times \Sigma$ with the half-open annuli 
$$
A_t \,=\, \partial \Sigma  \times \{ (r,t) \in \overline \D \setminus \{0\} \} .
$$
The closure of the pages $\Sigma_t$ is diffeomorphic to $\Sigma$, and their common boundary $\mathcal B$, called the binding of the open book, is the image by $\Psi$ of $\partial \Sigma \times \{0\} \subset \partial \Sigma \times \mathbb{D}$.  The orientation of $\Sigma$ induces orientations on the pages and the binding. 

Two open book decompositions $(\Sigma,\psi,\Psi)$ and $(\Sigma,\psi',\Psi')$ are diffeomorphic if $Y(\psi)$ and $Y(\psi')$ are both diffeomorphic to the same $3$-manifold $Y$, and there exists a diffeomorphism of $Y$ taking pages of  $(\Sigma,\psi,\Psi)$ to pages of  $(\Sigma,\psi',\Psi')$. 

\begin{rem}
If $\Sigma$ is a closed orientable surface with non-empty boundary and $\psi$ and $\psi'$ are diffeomorphisms of $\Sigma$ which are the identity near $\partial \Sigma$, then $Y(\psi)$ and $Y(\psi')$ are diffeomorphic if, and only if, $\psi$ and $\psi'$ are isotopic via an isotopy that is the identity on a neighbourhood of $\partial \Sigma$.
If such an isotopy exists, then for any choice of diffeomorphisms $\Psi$ and $\Psi'$ the open book decompositions $(\Sigma,\psi,\Psi)$ and $(\Sigma,\psi',\Psi')$ are diffeomorphic.
\end{rem}

\medskip \noindent
{\bf Contact structures and open books.}
Let $Y$ be a closed connected oriented 3-manifold and $Y(\psi)$ be an open book decomposition on $Y$. We will assume for simplicity that $\partial \Sigma$ is connected. %Before our next definition we introduce some special coordinates on certain regions of $M(\psi)$. 

%We first choose a collar neighbourhood $N \subset \Sigma$ of $\partial \Sigma$ on which $\psi$ is the identity. Thus, $N$ is diffeomorphic to  $[1,1+\delta] \times \partial \Sigma$, and we use this to introduce coordinates $(r,\theta)$ for $N$, where the boundary of $\partial \Sigma$ corresponds to $r=1$.  We use $N$ to obtain a collar neighbourhood $S^1 \times N$ of $\partial \Sigma(\psi)$, and use the above coordinates to obtain coordinates $(t,r,\theta)$ for $S^1 \times N$.

%Secondly, we consider on $\partial \Sigma \times \overline \D$ coordinates $(\theta,\ovr,\ot)$, where $\theta$ is our choice of coordinate for $\partial \Sigma$ and $(\ovr,\ot)$ are polar coordinates on $\overline \D$.

\begin{defn}
A contact form $\alpha$ on $Y$ is said to be {\it adapted to the open book decomposition} $(\Sigma,\psi,\Psi)$ if 
\begin{itemize}
\item[$\bullet$]
$\alpha$ is positive on~$\mathcal B$,
\item[$\bullet$]
$d\alpha$ is a positive area form on every page.
\end{itemize}
\end{defn}
It is not hard to see that a contact form $\alpha$ is adapted to an open book decomposition if, and only if, 
\begin{itemize}
    \item the Reeb vector field $X_\alpha$ is positively transverse to the interior of the pages,
    \item the Reeb vector field is tangent to the binding, and induces the positive orientation on the binding.
\end{itemize}

\begin{defn}
A contact $3$-manifold $(Y,\xi)$ is said to be supported by an open book decomposition $(\Sigma,\psi,\Psi)$, if there exists a contact form $\alpha$ on $(Y,\xi)$ adapted to this open book decomposition.
\end{defn}
%\begin{rem} \label{rem:isotopy}
%If a contact $3$-manifold $(M,\xi)$ is supported by an open book decomposition $(\Sigma,\psi,\Psi)$, and $\psi': \Sigma \to \Sigma$ is a diffeomorphis which is the identity near $\partial \Sigma$ and that is isotopic to $\psi$ via an isotopy that fixes $\partial \Sigma $ pointwise,  then $(M,\xi)$ also supported by open book decomposition $(\Sigma,\psi',\Psi')$. This follows easily from the face that for such an $\psi'$ there is a diffeomorphism of $M(\psi)$ to $M(\psi')$ that takes pages the pages.
%\end{rem}

The following result of Giroux shows the central role played by open book decompositions in $3$-dimensional contact topology.

\begin{thm}[Giroux] \label{t:Giroux}
Given a contact $3$-manifold $(Y,\xi)$ there exists an open book decomposition of~$Y$ supporting~$(Y,\xi)$. Moreover, the open book decomposition can be chosen to have connected binding and pseudo-Anosov monodromy.

Two contact structures supported by diffeomorphic open book decompositions are diffeomorphic.
\end{thm}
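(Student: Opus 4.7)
The plan is to treat Giroux's correspondence in three separate stages: (E) existence of some adapted open book, (P) refinement to one with connected binding and pseudo-Anosov monodromy, and (U) the contact-uniqueness statement for two diffeomorphic open books.

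For (E), I would proceed via a contact cell decomposition. Starting from any smooth triangulation $\Delta$ of $Y$, one first Legendrianizes the 1-skeleton $\Delta^{(1)}$ by a $C^0$-small isotopy, using the Legendrian realization principle on convex faces, and then repeatedly subdivides any 2-cell whose Legendrian boundary has Thurston--Bennequin invariant different from $-1$, until one obtains a CW-decomposition in which every 2-cell has tb$=-1$. Thickening $\Delta^{(1)}$ inside $(Y,\xi)$ then produces a compact ribbon surface $\Sigma$ whose oriented boundary $B=\partial\Sigma$ is positively transverse to $\xi$. A convex-surface computation (the tb$=-1$ condition) shows that the characteristic foliation on $\Sigma$ is Morse--Smale with $B$ as a positive attractor, and that $Y\setminus\nu(B)$ retracts onto $\Sigma$ along a contact vector field transverse to the pages. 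Hence $Y\setminus\nu(B)$ is diffeomorphic to a mapping torus $\Sigma(\psi)$ for some $\psi:\Sigma\to\Sigma$ fixing a neighborhood of $\partial\Sigma$; gluing back $\partial\Sigma\times\D$ gives the open book $(\Sigma,\psi)$, and the defining vector field of this mapping-torus structure is itself proportional to the Reeb field of a contact form adapted to it.

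For (P), connected binding is obtained by iterating positive stabilizations: each Hopf-plumbing of a page along a properly embedded arc joining two distinct boundary components of $\Sigma$ decreases the number of binding components by one and preserves the supported contact structure (this is the stabilization step of the Giroux correspondence). For pseudo-Anosov monodromy, one applies Thurston's classification to the isotopy class of $\psi$ and further stabilizes along arcs meeting any reducing curves transversally; the added Dehn twists then alter the Nielsen--Thurston type of the restriction of $\psi$ to the reducible/periodic pieces, and a careful induction due to Colin--Honda shows that one can always arrange every piece in the final decomposition to be pseudo-Anosov or to be absorbed into one. This is the main obstacle of the proof: the mapping-class-group argument controlling how stabilizations affect the Nielsen--Thurston decomposition is the delicate ingredient.

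For (U), I would exploit convexity of the space of adapted contact forms. If $\alpha_0$ and $\alpha_1$ are two contact forms on $Y$ adapted to diffeomorphic open books $(\Sigma,\psi,\Psi_0)$ and $(\Sigma,\psi,\Psi_1)$, then after pulling $\alpha_1$ back by an ambient diffeomorphism taking one open book to the other, we may assume $\Psi_0=\Psi_1$. Both conditions defining ``adapted'' (positivity on the binding, and $d\alpha$ restricting to a positive area form on each page) are convex in $\alpha$, so the linear interpolation
\begin{equation*}
\alpha_t=(1-t)\alpha_0+t\alpha_1,\qquad t\in[0,1],
\end{equation*}
is a smooth path of contact forms adapted to the same open book. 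Gray's stability theorem then gives an ambient isotopy sending $\ker\alpha_0$ to $\ker\alpha_1$, which proves the uniqueness statement and completes the plan.
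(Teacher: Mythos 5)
A preliminary remark on the comparison itself: the paper gives no proof of this statement — it is quoted as Giroux's theorem, with the connected-binding and pseudo-Anosov refinement attributed to Colin--Honda \cite{CHstab} — so there is no internal argument to measure your plan against, and any genuine proof would have to reproduce a large part of the Giroux correspondence. Your steps (E) and (P) are reasonable outlines of exactly that literature (contact cell decompositions and the ribbon of the Legendrian $1$-skeleton for existence; positive Hopf plumbing along arcs joining distinct boundary components to make the binding connected; Colin--Honda for pseudo-Anosov monodromy), but as written they defer the hard content — in particular all of the Nielsen--Thurston bookkeeping in (P), which you yourself flag as the main obstacle — to the results being quoted. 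That matches how the paper uses the theorem (as a black box), but it means (E) and (P) are citations rather than proofs.

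The genuine gap is in step (U). The two conditions defining adaptedness ($\alpha>0$ on the binding, $d\alpha>0$ on the pages) are indeed convex, but the contact condition $\alpha\wedge d\alpha>0$ is not, and adaptedness of the endpoints does not make the segment $\alpha_t=(1-t)\alpha_0+t\alpha_1$ consist of contact forms. Expanding $\alpha_t\wedge d\alpha_t$ produces the cross term $t(1-t)\left(\alpha_0\wedge d\alpha_1+\alpha_1\wedge d\alpha_0\right)$; evaluating $\alpha_0\wedge d\alpha_1$ at an interior point of a page on a basis $(v,e_1,e_2)$, where $e_1,e_2$ span the tangent space of the page and $v$ spans the kernel of $d\alpha_1$ at that point (a line transverse to the page, since $d\alpha_1$ is an area form on the page), gives $\alpha_0(v)\,d\alpha_1(e_1,e_2)$, and $\alpha_0(v)$ has no a priori sign: $\alpha_0$ is positive on its own Reeb direction but not on every vector positively transverse to the pages. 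So the straight-line interpolation can leave the space of contact forms, and this is precisely the point where the standard proofs (Giroux; see also the expository accounts) do extra work: one first normalizes both forms so that they coincide near the binding, and then interpolates $(1-t)\alpha_0+t\alpha_1+K\eta$, where $\eta$ is a closed $1$-form agreeing with the pullback of the angular form of the fibration away from the binding (suitably cut off near it) and $K\gg0$. Since $d\alpha_t$ restricts to an area form on the pages and $\eta$ is positive on $\ker d\alpha_t$, the term $K\,\eta\wedge d\alpha_t$ is a positive volume form dominating the bounded remainder, so the concatenated path consists of contact forms and Gray stability applies. Without this (or an equivalent) correction, your uniqueness argument does not go through as stated.
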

\begin{rem}
The statement that the open book decomposition can be chosen to have pseudo-Anosov monodromy and connected binding is due to Colin and Honda \cite{CHstab}. 
\end{rem}

\subsection{Existence of transverse knots that force topological entropy}

In this section we present two different arguments to prove theorem \ref{theorem:existence}. We first remark that the binding $\mathcal B$ of an open book decomposition $(\Sigma,\psi,\Psi)$ that supports a contact structure $\xi$ is always a transverse link.

By Giroux's theorem \ref{t:Giroux}, it is clear that theorem \ref{theorem:existence} will follow once we establish the following result. 
\begin{thm} \label{t:obentropy}
Let $(Y,\xi)$ be a closed $3$-dimensional contact manifold that admits Reeb flows with vanishing topological entropy. Let  $(\Sigma,\psi,\Psi)$ be an open book decomposition that supports  $(Y,\xi)$ and satisfies
\begin{itemize}
    \item $\partial \Sigma$ is connected and the monodromy of the first return map $\psi$ is pseudo-Anosov map.
\end{itemize}

 Then there exists an open book decomposition $(\Sigma,\psi',\Psi')$ diffeomorphic to $(\Sigma,\psi,\Psi)$ that also supports $(Y,\xi)$ and whose binding $\mathcal{B}'$ forces topological entropy.
%$\lambda_0$ adapted to the open book $(\Sigma,\psi,\Psi)$ such that $CH_{\mathcal B}(\lambda_0)$ has exponential homotopical growth. It follows that $\mathcal B$ forces topological entropy.
\end{thm}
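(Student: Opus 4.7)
The plan is to exploit the pseudo-Anosov dynamics of $\psi$ to produce a Reeb-adapted contact form whose periodic Reeb orbits realise exponentially many free homotopy classes of $Y\setminus \mathcal{B}'$ carrying non-vanishing cylindrical contact homology, and then to invoke Theorem~\ref{theorem:growth-entropy}.

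\emph{Step 1: an adapted, hypertight contact form.} After replacing $\psi$ by a smooth diffeomorphism in its isotopy class rel $\partial\Sigma$ (obtained by smoothing a pseudo-Anosov representative near its singular set), a Thurston--Winkelnkemper construction refined as in \cite{CHstab} should produce a contact form $\alpha_0$ on $(Y,\xi)$ adapted to an open book $(\Sigma,\psi',\Psi')$ diffeomorphic to $(\Sigma,\psi,\Psi)$, with the following properties: the binding $\mathcal{B}'$ is a simply covered non-degenerate elliptic Reeb orbit, and in the complement of a thin solid-torus neighbourhood of $\mathcal{B}'$ the Reeb flow of $\alpha_0$ is, up to a smooth time reparametrisation, the suspension flow of a smooth representative of the pseudo-Anosov class. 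Now $Y\setminus\mathcal{B}'$ deformation retracts onto the mapping torus $M_{\psi'}$; since $\psi'$ is pseudo-Anosov, $M_{\psi'}$ is the interior of a finite-volume hyperbolic $3$-manifold with a cusp at the removed core, so every non-binding Reeb orbit—being a closed trajectory of the suspension—winds a positive number of times around $S^1$ and is therefore non-contractible in $Y\setminus \mathcal{B}'$. Moreover, since the pages form a fibre surface of genus $g\geq 1$, Stallings's theorem on fibered knots gives that $\mathcal{B}'$ has Seifert genus $g$ and bounds no disk in $Y$, so condition~(b) of hypertightness is vacuous. These two facts verify the hypertightness hypothesis of Theorem~\ref{theorem:growth-entropy}.

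\emph{Step 2: exponential homotopical growth of $CH_{\mathcal{B}'}(\alpha_0)$.} Let $\mu>1$ denote the dilatation of the pseudo-Anosov class of $\psi'$. The standard count for pseudo-Anosov maps gives at least $\mu^n/n$ prime periodic orbits of $\psi'$ of minimal period $n$, each yielding a simply covered periodic orbit $\gamma$ of the suspension flow with $\alpha_0$-action bounded by $C n$ for a fixed constant $C$. For pseudo-Anosov suspensions, prime periodic orbits whose combinatorial codings with respect to a Markov partition associated to the invariant train-track differ represent distinct conjugacy classes in $\pi_1(M_{\psi'})=\pi_1(Y\setminus\mathcal{B}')$; this produces at least $\mu^n/n$ pairwise distinct free homotopy classes $\rho\in\Omega(Y\setminus\mathcal{B}')$ with a Reeb representative of action at most $Cn$. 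By choosing $\alpha_0$ so that the linearised Reeb flow along each $\gamma$ infinitesimally models the pseudo-Anosov affine structure, these orbits become non-degenerate hyperbolic with positive real Floquet multipliers, so their Conley--Zehnder indices share a common parity, the cylindrical differential on $\CC_{\rho|\mathcal{B}'}(\alpha_0)$ vanishes, and hence $CH_{\rho|\mathcal{B}'}(\alpha_0)\neq 0$. The cusp geometry of $M_{\psi'}$ ensures that none of these $\gamma$ can be homotoped onto $\mathcal{B}'$ inside $Y\setminus \mathcal{B}'$, so Momin's PLC condition is fulfilled; consequently $\rho\in\Omega^{Cn}_{\mathcal{B}'}(\alpha_0)$ for each such class, giving the exponential homotopical growth rate $\geq \log(\mu)/C>0$.

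\emph{Step 3: conclusion and main obstacle.} Combining Steps~1 and 2 with Theorem~\ref{theorem:growth-entropy} yields that every contact form $\lambda$ on $(Y,\xi)$ admitting $\mathcal{B}'$ as a set of Reeb orbits satisfies $h_{top}(\phi_\lambda)\geq \log(\mu)/(C\max f_\lambda)>0$, which gives Theorem~\ref{t:obentropy} and hence Theorem~\ref{theorem:existence} via Giroux's theorem. The main technical obstacle will be the delicate structure of the contact form near the singular leaves of the pseudo-Anosov foliations, where the naive suspension produces non-smooth or degenerate Reeb orbits: one must perform a careful local perturbation near each singular orbit that renders them non-degenerate with controlled Conley--Zehnder parities, while simultaneously preserving both the hypertightness of $\alpha_0$ on $Y\setminus\mathcal{B}'$ and the non-vanishing of $CH_{\rho|\mathcal{B}'}(\alpha_0)$ for the homotopy classes produced in Step~2.
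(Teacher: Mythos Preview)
Your overall architecture matches the paper's first proof: build an adapted contact form on the open book, verify hypertightness of the binding complement, check Momin's PLC condition, establish exponential homotopical growth of $CH_{\mathcal{B}'}(\alpha_0)$, and invoke Theorem~\ref{theorem:growth-entropy}. Your use of the hyperbolic geometry of the mapping torus to rule out homotopies of orbits into the binding, and of the fibered-knot genus to dispense with bounding disks, is in the same spirit as the paper's arguments (which instead cite \cite[Lemma 6.6]{Momin} and use linking with the pages).

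The genuine gap is in Step~2, at the point where you conclude $CH_{\rho|\mathcal{B}'}(\alpha_0)\neq 0$. You want to arrange that every Reeb orbit in each relevant class $\rho$ is positive hyperbolic, so that all Conley--Zehnder indices have the same parity and the differential vanishes identically. But the Reeb first-return map on a page is \emph{not} the (singular) pseudo-Anosov representative; it is a smooth diffeomorphism in its isotopy class rel $\partial\Sigma$, produced by the Thurston--Winkelnkemper construction followed by a non-degeneracy perturbation. After smoothing near the pseudo-Anosov singularities, new periodic points of various indices typically appear, and there is no mechanism guaranteeing that every orbit landing in a given free homotopy class of $Y\setminus\mathcal{B}'$ is positive hyperbolic. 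You flag this as ``the main technical obstacle'' but do not resolve it; as stated, the claim that all CZ parities agree in each $\rho$ is unjustified.

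The paper sidesteps this difficulty by a more robust, homotopy-invariant count. Rather than controlling the \emph{type} of each orbit, it cites \cite[Proposition~11]{Alves-Cylindrical}, which uses Nielsen fixed-point theory for the pseudo-Anosov mapping class to produce exponentially many free homotopy classes $\rho$ in $\Sigma(\psi)$ that (i) are not homotopic into $\partial\Sigma(\psi)$ and (ii) contain an \emph{odd} number of non-degenerate Reeb orbits. The parity of this count is the Lefschetz index of the corresponding Nielsen class, which is preserved under isotopy and generic perturbation; hence it survives the smoothing and the contact-form construction. An odd number of generators forces $\dim \CC_{\rho|\mathcal{B}'}$ to be odd, so the Euler characteristic of the complex is nonzero and $CH_{\rho|\mathcal{B}'}(\alpha_0)\neq 0$ without any control on individual CZ parities. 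Replacing your ``common parity'' argument by this odd-count/Euler-characteristic argument closes the gap and is exactly what the paper does.
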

We now give two proofs of theorem \ref{t:obentropy}, thus presenting two ways to prove theorem \ref{theorem:existence}.

The first proof uses theorem \ref{theorem:growth-entropy} and the second proof uses theorem \ref{theorem:growthleg}.

%\begin{thm} \label{theorem:existence}
%Let $(M,\xi)$ be a $3$-dimensional contact manifold supported by an open book $(\Sigma,\psi,\Psi)$ whose binding $\mathcal B$ in connected and such that the $\psi$ map is isotopic to a pseudo-Anosov map. Then there exists a contact form $\lambda_0$ adapted to the open book $(\Sigma,\psi,\Psi)$, such that $CH_{\mathcal B}(\lambda_0)$ has exponential homotopical growth. It follows that $\mathcal B$ forces topological entropy.
%\end{thm}

\subsubsection{\it First proof of Theorem \ref{t:obentropy}} \label{sec:firstproof}

\

\textit{Step 1: The special contact form $\lambda_0$.}

Let $(Y,\xi)$ be a closed contact $3$-manifold as in the statement of the theorem. By theorem \ref{t:Giroux} we know that there exists an open book decomposition $(\Sigma,\psi,\Psi)$ supporting $(Y,\xi)$ with the properties that $\partial \Sigma$ is connected and the monodromy of the first return map $\psi$ is pseudo-Anosov map.

Consider an area form $\omega$ on $\Sigma$. We can assume that $\psi$ preserves $\omega$. The reason is that every diffeomorphism of $\Sigma$ that is the identity in a neighbourhood of $\partial \Sigma$ is isotopic to an area preserving one via an isotopy that fixes a neighbourhood of $\partial \Sigma$, and thus  the diffeomorphism class of any open book decomposition contains an open book decomposition whose monodromy map preserves $\omega$. It then follows from Giroux's theorem that, up to choosing a diffeomorphic open book decomposition, we can always assume that for the supporting open book decomposition $(\Sigma,\psi,\Psi)$ the map $\psi$ is area preserving. 

We now define a contact form $\sigma_\Sigma$ on the mapping torus $\Sigma(\psi)$. For this we first consider a collar neighourhood $N$ of $\partial \Sigma$ on which $\psi$ is the identity. We introduce coordinates $(r,\theta) \in [1,1+\delta] \times \partial \Sigma$ for $N$, where the boundary of $\partial \Sigma$ corresponds to $r=1$, and such that $\omega= -dr \wedge d \theta$ in these coordinates. Let $\beta$ be a primitive of $\omega$ which equals $(2-r) d\theta$ on $N$. We then choose $\delta>0$ such that the formula
\begin{equation}
dt + \delta\big( (1-\chi(t))\beta + \chi(t)\psi^*\beta\big)
\end{equation}
defines a contact form on $\Sigma(\psi)$,
where $t \in [0,2\pi]$ and 
$\chi:[0,2\pi] \to [0,1]$ is a monotone function such that $\chi(0)=0$, $\chi(2\pi)=1$ and $\chi'$ has support in $(0,2\pi)$. Let $$V:=[0,2\pi]\times N / \sim $$
where $(0,x) \sim (2\pi,x)$ for each $x\in N$.
We then perturb $dt + \delta((1-\chi(t))\beta + \chi(t)\psi^*\beta)$, by a perturbation with support inside $\Sigma(\psi) \setminus N$, to a contact form $\sigma_\Sigma$ such that
\begin{align} \label{eq:mptransv}
 \mbox{all Reeb orbits in } \Sigma(\psi) \setminus N \mbox{ are non-degenerate,} \\
 \nonumber \mbox{and } X_{\sigma_\Sigma} \mbox{ is transverse to } \Sigma \times \{t\} \mbox{ for all } t \in [0,2\pi].  
\end{align}

We proceed to construct  a contact form $\sigma_S$ on $\overline \D \times \partial \Sigma$.
Let $\overline{\D}$ be the closed unit disk in $\R^2$.
Consider polar coordinates $(\ovr, \ot) \in (0,1] \times S^1$ on  $\overline{\D}\setminus \{0\}$ and the coordinate $\otheta$ on $\partial \Sigma$.
We can then consider coordinates $(\ovr, \ot, \otheta) $ on $\overline{\D} \setminus \{0\} \times\partial \Sigma$.
We pick a smooth function $f \colon (0,1] \to \R$ such that 
\begin{itemize}
\item $f'<0$,
\item $f(\ovr) = 2-\ovr$ on a neighbourhood of $1$,
\item $f(\ovr) = 2 -\ovr^4$ on a neighbourhood of $0$.
\end{itemize}

\smallskip \noindent
We pick another smooth function $g \colon (0,1]\to \R$ satisfying
\begin{itemize}
\item $g'>0$ on $(0,1)$, 
\item $g(1) =1$ and all derivatives of $g$ vanish at $1$,
\item $g(\ovr) = \frac{\ovr^2}{2}$ on a neighbourhood of $0$.
\end{itemize}

Define the $1$-form 
\begin{equation} \label{e:la}
\sigma_{S} (\ovr, \ot, \otheta) \,=\, 
                 g(\ovr)  d\ot + \delta f(\ovr)  d\otheta 
\end{equation}
on $\overline{\D} \setminus \{0\} \times S^1$.
Then 
\begin{equation} \label{e:dla}
\sigma_{S} \wedge d \sigma_{S} \,=\, 
\delta  h(\ovr)  d\ovr \wedge d \ot \wedge d\otheta
\end{equation}
where $h(\ovr) = (fg'-f'g)(\ovr)$.
It follows that $\sigma_{S} $ is a contact form on $\overline{\D} \setminus \{0\}  \times \partial \Sigma$.
For $\ovr$ near $0$ we have $h(\ovr) = \ovr (2+\ovr^4)$,
whence $\sigma_{S} $ extends to a smooth contact form on 
$\overline{\D} \times \partial \Sigma$, that we also denote by $\sigma_{S} $.
The Reeb vector field of $\sigma_{S} $ is given by
\begin{equation} \label{e:Xle}
X_{\sigma_{S} } (\ovr, \ot, \otheta) \,=\, 
\frac{1}{h(\ovr)} \left( \frac 1 \delta  g'(\ovr)  \partial_{\otheta} - f'(\ovr)  \partial_\ot \right).
\end{equation}
It follows that $X_{\sigma_{S} }$ is tangent to the tori $\T_{\ovr} := \{ \ovr = \text{const} \}$,
and that for each $\ovr \in (0,1]$ the flow of $X_{\sigma_{S} }$ is linear:
\begin{equation} \label{e:linear}
\phi_{\sigma_{S} }^s (\ovr, \ot, \otheta) \,=\, \left( \ovr, \ot-\frac{f'(\ovr)}{h(\ovr)}  s, \otheta + \frac{g'(\ovr)}{\delta  h(\ovr)}  s \right) .
\end{equation}

In particular, using our choices of $f$ and $g$ we see that $X_{\sigma_{S}} = \partial_{\ot}$
on the boundary torus~$\T_1$,
and that $X_{\sigma_{S}} = \frac{1}{2 \delta} \partial_{\otheta}$ is tangent along the core circle $C= \{ \ovr=0 \}$ 
of the solid torus, and gives the positive orientation to $\partial \Sigma$.
Furthermore, \eqref{e:Xle} shows that $X_{\sigma_{S}}$ is positively transverse to the half-open annuli $A_{\ot}$.

We let $\sigma$ be the contact form on $Y(\psi)$ obtained by gluing $\sigma_\Sigma$ and $\sigma_S$ along their common boundary. It follows from the expressions for $\sigma_\Sigma$ in a neighbourhood of $\partial \Sigma(\psi)$ and for $\sigma_S$ in a neighbourhood of $\partial (\D \times \partial \Sigma)$, that the two contact forms indeed glue to give the contact form $\sigma$. The Reeb vector field of $\sigma$ is transverse to $\Sigma \times \{t\}$ and $A_t$ for every $t \in [0,2\pi]$: this follows from \eqref{eq:mptransv} and \eqref{e:linear}.

We now define the contact form $\widetilde \sigma$ on $Y$ by
\begin{equation}
    \widetilde \sigma := \Psi_* \sigma,
\end{equation}
where we recall that $\Psi:Y(\psi) \to Y$ is the diffeomorphism in the definition of the open book decomposition. Since the Reeb vector field $X_\sigma$ is transverse to $\Sigma \times \{t\}$ and $A_t$ for every $t \in [0,2\pi]$, we conclude that $\widetilde \sigma$ is adapted to the open book decomposition $(\Sigma,\psi,\Psi)$. 

It then follows that the contact manifold $(Y, \ker \widetilde \sigma)$ is supported by $(\Sigma,\psi,\Psi)$. Since $(Y,\xi)$ is also supported by $(\Sigma,\psi,\Psi)$ we conclude that there is a contactomorphism $\Upsilon:  (Y, \ker \widetilde \sigma) \to  (Y,\xi) $, and we define 
\begin{equation} \label{eq:specialform}
    \lambda_0 := \Upsilon^* \widetilde \sigma.
\end{equation}

\ 

{\it Step 2: Some properties of $CH_{\Upsilon(\mathcal{B})}(\lambda_0)$.}

In view of the contactomorphisms $\Psi$ and $\Upsilon$ we have the following isomorphisms of cylindrical contact homologies: $CH_{\Psi^{-1}(\mathcal{B})}(\sigma) \cong CH_{\mathcal{B}}(\widetilde \sigma) \cong CH_{\Upsilon(\mathcal{B})}(\lambda_0)$. It follows that the exponential growth rate of $CH_{\Upsilon(\mathcal{B})}(\lambda_0)$ is the same as the exponential growth rate of $CH_{\Psi^{-1}(\mathcal{B})}(\sigma)$.

We start quoting the following result from \cite{Alves-Cylindrical} about the contact form $\sigma_\Sigma$. In the proof of \cite[Proposition 11]{Alves-Cylindrical} it is shown that the set $\Delta^T(\sigma_\Sigma)$ of free homotopy classes of loops in $\Sigma(\psi)$ that 
\begin{itemize}
    \item are not homotopic to a curve completely contained in $\partial \Sigma(\psi)$,
    \item contain only non-degenerate Reeb orbits with action $\leq T$,
    \item contain an odd number of Reeb orbits,
\end{itemize}
satisfies
\begin{equation} \label{eq:growthhomclasses}
    \limsup_{T \to +\infty} \frac{\log \# \Delta^T(\sigma_\Sigma) }{T} > 0.
\end{equation}
This estimate uses the fact that the monodromy of $\psi$ is pseudo-Anosov. In particular the surface $\Sigma$ has to have positive genus, since it has negative Euler characteristic and only one boundary component.

We now remark that the manifold $\Sigma(\psi)$ inside $Y(\psi) \setminus \Psi^{-1}(\mathcal{B})$ is a deformation retract of $Y(\psi) \setminus \Psi^{-1}(\mathcal{B})$. It follows that two distinct free homotopy classes of loops in $\Sigma(\psi)$ remain distinct in $Y(\psi) \setminus \Psi^{-1}(\mathcal{B})$. Therefore we can view $\Delta^T(\sigma_\Sigma)$ as a set of distinct free homotopy classes in $Y(\psi) \setminus \Psi^{-1}(\mathcal{B})$.

\

{\it Step 3: Computation of the exponential growth rate of $CH_{\Upsilon(\mathcal{B})}(\lambda_0)$.}

In order to finish the proof we will show that for each element $\rho \in \Delta^T(\sigma_\Sigma)$ the triple $(\sigma,\Psi^{-1}(\mathcal{B}), \rho )$ satisfies the PLC condition of section \ref{section:growth-entropy} and that $CH^\rho_{\Psi^{-1}(\mathcal{B})}(\sigma) \neq 0$. 

We first remark that none of the Reeb orbits of $\sigma$ in $Y(\psi) \setminus \Psi^{-1}(\mathcal{B})$ is contractible in $Y(\psi) \setminus \Psi^{-1}(\mathcal{B})$. The reason for this is that all these Reeb orbits have positive intersection number with the surfaces $\Sigma\times \{t\} \cup A_t$, and since the boundary of these surfaces is $\Psi^{-1}(\mathcal{B})$, we conclude that they are all linked with $\Psi^{-1}(\mathcal{B})$.

Secondly we note that any disk in $Y(\psi)$ whose boundary is a cover of $\Psi^{-1}(\mathcal{B})$ must have an interior intersection point with $\Psi^{-1}(\mathcal{B})$ as shown in \cite[Lemma 6.6]{Momin}.

Let now $\rho \in \Delta^T(\sigma_\Sigma)$. We claim that all Reeb orbits of $\sigma$ in $\rho $ are in the interior of $\Sigma(\psi)$. To prove this we let $\gamma \in \rho$ be a Reeb orbit of $\sigma$, and assume that there is a cylinder that does not intersect $\Psi^{-1}(\mathcal{B})$ and whose boundaries are $\gamma$ and a Reeb orbit contained in $Y(\psi) \setminus (\Sigma(\psi) \cup \Psi^{-1}(\mathcal{B}))$. If we look at the image of this cylinder by the deformation retract that takes $Y(\psi) \setminus\Psi^{-1}(\mathcal{B})$ to $\Sigma(\psi)$, we obtain a cylinder contained in $\Sigma(\psi)$ with one boundary being $\gamma$ and the other being a curve in $\partial \Sigma(\psi)$. But the existence of this cylinder contradicts the fact that $\rho \in \Delta^T(\sigma_\Sigma)$.

We now claim that for $\rho \in \Delta^T(\sigma_\Sigma)$ there cannot be a cylinder $Cyl$ whose boundary is the union of a Reeb orbit $\gamma \in \rho$ and a multiple cover of $\Psi^{-1}(\mathcal{B})$. To prove this one first shows that if such a cylinder exists then there is a cylinder contained in $Y(\psi) \setminus\Psi^{-1}(\mathcal{B})$ whose boundary is the union of $\gamma$ with a curve completely contained in $Y(\psi) \setminus (\Sigma(\psi) \cup \Psi^{-1}(\mathcal{B}))$. This cylinder is obtained as the intersection of $Cyl$ with $Y(\psi) \setminus V$, where $V$ is a sufficiently small tubular neighbourhood of $\Psi^{-1}(\mathcal{B})$. We then reason as in the previous paragraph to obtain a contradiction.

This discussion implies that for all $ \rho \in \Delta^T(\sigma_\Sigma)$ we have that $(\sigma,\Psi^{-1}(\mathcal{B}), \rho )$ satisfies the PLC condition. Moreover, for each such $\rho$ all the Reeb orbits of $\sigma$ in the class $\rho$ are contained in $\Sigma(\psi)$. By the definition of $\Delta^T(\sigma_\Sigma)$ we then conclude that $\rho$ contains an odd number of Reeb orbits in $Y(\psi)$. For Euler characteristic reasons it follows that  
$CH^\rho_{\Psi^{-1}(\mathcal{B})}(\sigma)\neq 0$. 
We conclude that $\Delta^T(\sigma_\Sigma) \subset \Omega^T_{\Psi^{-1}(\mathcal{B})}(\sigma) $, where $\Omega^T_{\Psi^{-1}(\mathcal{B})}(\sigma) $ is the set used in the definition of the exponential homotopical growth rate in section \ref{section:homgrowthCH}. The exponential homotopical growth rate of $CH_{\Psi^{-1}(\mathcal{B})}(\sigma)$, and consequently of $CH_{\Upsilon(\mathcal{B})}(\lambda_0)$, then follows from \eqref{eq:growthhomclasses}. 

Lastly, we remark that $\Upsilon(\mathcal{B})$ is the binding of the open book decomposition $(\Sigma,\psi,\Upsilon\circ \Psi)$ which is diffeomorphic to $(\Sigma,\psi, \Psi)$ and supports $(Y,\xi)$.
Invoking theorem \ref{theorem:growth-entropy} we conclude that $\Upsilon(\mathcal{B})$ forces topological entropy.
\qed

\subsubsection{\it Second proof of Theorem \ref{t:obentropy}}

The structure of the second proof of Theorem \ref{theorem:existence} is very similar to the first, but this time we will use theorem \ref{theorem:growthleg}.

Let $(Y,\xi)$ be a closed contact $3$-manifold as in the statement of the theorem and $(\Sigma,\psi,\Psi)$ be an open book decomposition supporting $(Y,\xi)$, such that $\Sigma$ has only one boundary component and the monodromy of $\psi$ is pseudo-Anosov. Again we assume that $\psi$ is area preserving for an area form $\omega$ in $\Sigma$. Let $\beta$ be a primitive of $\omega$ which equals $(2-r) d\theta$ on $N$. We then choose $\delta>0$ such that the formula
\begin{equation}
dt + \delta\big( (1-\chi(t))\beta + \chi(t)\psi^*\beta\big)
\end{equation}
defines a contact form on $\Sigma(\psi)$. We denote the contact structure associated to this contact form by $\xi_\Sigma$.

We now follow \cite[Section 4.1]{A2} to obtain a pair of disjoint Legendrian knots $\Lambda$ and $\widehat{\Lambda}$ in $(\Sigma(\psi),\xi_\Sigma)$ which satisfy:
\begin{itemize}
    \item both $\Lambda$ and $\widehat{\Lambda}$ are contained in the interior of $[0,\pi] \times \Sigma \subset \Sigma(\psi) $ and are graphs over embedded curves in $\Sigma$ that are not contractible and are not homotopic to the boundary of $\Sigma$.
\end{itemize}

By a sufficiently small perturbation of $dt + \delta\big( (1-\chi(t))\beta + \chi(t)\psi^*\beta\big)$ supported in the interior of  $[0,2\pi] \times \Sigma \subset \Sigma(\psi) $ we can obtain a contact form $\sigma_\Sigma$ on $(\Sigma(\psi), \xi_\Sigma)$ satisfying
\begin{align} \label{eq:mptransv1}
 \mbox{all Reeb chords from } \Lambda \to \widehat{\Lambda} \mbox{ are transverse,} \\
 \nonumber \mbox{and } X_{\sigma_\Sigma} \mbox{ is transverse to } \Sigma \times \{t\} \mbox{ for all } t \in [0,2\pi].  
\end{align}
It follows from this that 
\begin{itemize}
    \item $\sigma_\Sigma$ is hypertight in the complement of $\Psi^{-1}(\mathcal{B}$,
    \item there are no Reeb chords of $\sigma_\Sigma$ from $\Lambda$ to itself that vanish in $\pi_1(\Sigma(\psi),\Lambda)$,
    \item there are no Reeb chords of $\sigma_\Sigma$ from $\widehat\Lambda$ to itself that vanish in $\pi_1(\Sigma(\psi),\widehat\Lambda)$,
\end{itemize}

We now quote the following result from \cite{A2} about the contact form $\sigma_\Sigma$. In the proof of \cite[Section 4.4]{A2} it is shown that if we denote by $\Delta^T(\sigma_\Sigma, \Lambda \to \widehat{\Lambda})$ the set of homotopy classes of paths in $\Sigma(\psi)$ from $\Lambda \to \widehat{\Lambda}$ that 
\begin{itemize}
    \item contain only non-degenerate Reeb chords with action $\leq T$,
    \item contain an odd number of Reeb chords,
\end{itemize}
then we have
\begin{equation} \label{eq:growthhomclasses1}
    \limsup_{T \to +\infty} \frac{\log \# \Delta^T(\sigma_\Sigma,\Lambda \to \widehat{\Lambda}) }{T} > 0.
\end{equation}
This estimate uses the fact that the monodromy of $\psi$ is pseudo-Anosov and that the Legendrian knots $\Lambda$ and $\widehat{\Lambda}$ are not homotopic to curves contained in $\partial \Sigma(\psi)$. Again $\Sigma$ has to have positive genus, since it has negative Euler characteristic and only one boundary component.

We then glue $\sigma_\Sigma$ to the contact form $\sigma_S$ as defined in \eqref{e:la} to obtain a contact form $\sigma$ on $Y(\psi)$.
The proof that $LCH_{\Psi^{-1}(\mathcal{B}}(\sigma,\Lambda \to \widehat \Lambda)$ grows exponentially is now an obvious adaptation of steps $2$ and $3$ of the first proof of theorem \ref{theorem:existence} that we gave in section \ref{sec:firstproof}.

Lastly, we remark that $\Upsilon(\mathcal{B})$ is the binding of the open book decomposition $(\Sigma,\psi,\Upsilon\circ \Psi)$ which is diffeomorphic to $(\Sigma,\psi, \Psi)$ and supports $(Y,\xi)$.
Invoking theorem \ref{theorem:growthleg} we conclude that $\Upsilon(\mathcal{B})$ forces topological entropy. 
\qed

\bibliographystyle{plain}
\bibliography{ReferencesS5}

\end{document}